\documentclass[12pt,reqno]{amsart}

\usepackage[usenames,dvipsnames]{color}
\usepackage{amssymb}
\usepackage{comment}
\usepackage{enumitem}
\usepackage{graphicx}
\usepackage{esint}
\usepackage{pdfpages}
\usepackage{svg}
\usepackage{caption}
\usepackage{mathrsfs}
\usepackage{xypic}
\usepackage{bm} 
\usepackage[unicode,colorlinks=true,linkcolor=Red,citecolor=Green]{hyperref}

\AtBeginDocument{\addtocontents{toc}{\protect\setlength{\parskip}{0pt}}}

\SetLabelAlign{parright}{\parbox[t]{\labelwidth}{\raggedleft#1}}

\newtheorem{theorem}{Theorem}[section]
\newtheorem{proposition}[theorem]{Proposition}
\newtheorem{lemma}[theorem]{Lemma}
\newtheorem{corollary}[theorem]{Corollary}

\theoremstyle{definition}
\newtheorem{definition}[theorem]{Definition}

\theoremstyle{remark}
\newtheorem{remark}[theorem]{Remark}

\numberwithin{equation}{section}

\def\CIc{C^\infty_{\mathrm c}} 
\def\indic{\operatorname{1\hskip-2.75pt\relax l}} 

\DeclareMathOperator{\bad}{bad}
\DeclareMathOperator{\const}{const}

\newcommand{\fol}{\mathrm{fol}} 

\def\nlin{\mathrm{nlin}} 

\renewcommand{\Re}{\operatorname{Re}}
\newcommand{\res}{\mathrm{res}} 

\DeclareMathOperator{\supp}{supp}

\newcommand{\uni}{\mathrm{uni}} 

\title{Nonlinear uncertainty principle}

\author{Alex Cohen}
\email{alexcohen@nyu.edu}

\address{Courant Institute of Mathematical Sciences, New York University, 251 Mercer Street, New York, NY 10012}

\author{Semyon Dyatlov}
\email{dyatlov@math.mit.edu}

\address{Massachusetts Institute of Technology, 77 Massachusetts Ave, Cambridge, MA 02139}

\begin{document}

\begin{abstract}
We prove an uncertainty principle for subsets of $\mathbb R$ under weak branching assumptions. We impose a nonlinearity condition
on the phase; without it our uncertainty principle fails e.g. for arithmetic progressions.
The key ingredient in the proof is Shmerkin's nonlinear projection theorem.
This paper is a part of the work in progress~\cite{ehalf}, but it is released
earlier than the rest of that paper since it is also used in a paper
of Cohen--Li--Tao~\cite{Cohen-Li-Tao}. It also has an overlap with
our recent paper~\cite{fup12}; see the end of the introduction for more details. 
\end{abstract}

\maketitle

\section{Introduction}

We study operators of the form
\begin{equation}
  \label{e:B-NUP-def}
\mathcal B=\mathcal B(h):L^2(\mathbb R)\to L^2(\mathbb R),\quad
\mathcal Bf(x)=(2\pi h)^{-\frac12}\int_{\mathbb R} e^{{i\over h}\Theta(x,y)}\chi_{\mathcal B}(x,y)f(y)\,dy.
\end{equation}
Here $h>0$ is a small parameter; most of our statements are only interesting in the limit $h\to 0$.
We assume that the phase function $\Theta$ and the amplitude $\chi_{\mathcal B}$ satisfy
\begin{equation}
  \label{e:NUP-Theta-chi}
\Theta\in C^\infty(\mathcal U_{\mathcal B};\mathbb R),\quad
\chi_{\mathcal B}\in \CIc(\mathcal U_{\mathcal B}),\quad
\supp\chi_{\mathcal B}\subset \mathcal K_{\mathcal B}
\end{equation}
where $\mathcal U_{\mathcal B}\subset \mathbb R^2$ is some open set
and $\mathcal  K_{\mathcal B}\subset\mathcal U_{\mathcal B}$ is compact.
We also make a standard \emph{nondegeneracy assumption} on~$\Theta$:
\begin{equation}
  \label{e:NUP-nondegenerate}
\partial_x\partial_y\Theta\neq 0\quad\text{on }\mathcal U_{\mathcal B}.
\end{equation}
If $\Theta,\chi_{\mathcal B}$ are $h$-independent
then the family of operators $\mathcal B(h)$ is a semiclassical Fourier integral operator
associated to the symplectomorphism $\varkappa_\Theta$ of open subsets of $T^*\mathbb R$
defined as follows:
\begin{equation}
  \label{e:kappa-Theta-NUP}
(x,\xi)=\varkappa_\Theta(y,\eta)
\ \iff\
(x,y)\in\mathcal U_{\mathcal B},\
\xi=\partial_x\Theta(x,y),\
\eta=-\partial_y\Theta(x,y).
\end{equation}

An important special case is when $\Theta(x,y)=-xy$ and $\mathcal U_{\mathcal B}=\mathbb R^2$. If we make the illegal choice $\chi_\mathcal B\equiv 1$, then $\mathcal B$ is the semiclassically rescaled unitary Fourier transform,
denoted by~$\mathcal F_h$:
\begin{equation}
\label{e:F-h-def}
\mathcal F_h f(x)=(2\pi h)^{-\frac12}\int_{\mathbb R}e^{-{i\over h}xy}f(y)\,dy.
\end{equation}
In this case the symplectomorphism $\varkappa_\Theta$ is given by
\begin{equation}
  \label{e:F-h-kappa}
\varkappa_\Theta(y,\eta)=(\eta,-y).
\end{equation}
Since $\varkappa_\Theta$ is linear, we call the phase function $\Theta=-xy$ a \emph{linear phase}.

We will study the norm
\begin{equation}
  \label{e:B-NUP-norm}
\|\indic_X\mathcal B\indic_Y\|_{L^2\to L^2}
\end{equation}
where $X,Y\subset\mathbb R$ are some sets (depending on~$h$)
and $\indic_X,\indic_Y:L^2\to L^2$ are the multiplication
operators by the indicator functions of $X,Y$. More precisely, we are interested in \emph{uncertainty principles} which show that this norm goes to~0 as some power of~$h$ as $h\to 0$.

Here is an informal justification for using the term `uncertainty principle':
To get decay of the norm~\eqref{e:B-NUP-norm}, we need to show that for each $f\in L^2(\mathbb R)$,
the function $\mathcal B\indic_Y f$ is small on~$X$. The function $\mathcal B\indic_Y f$
is microlocalized on the set $\varkappa_\Theta(\{y\in Y\})$, and $\indic_X$ microlocalizes
to the set $\{x\in X\}\subset T^*\mathbb R$, so our uncertainty principle
amounts to showing that no function can be microlocalized
on $\varkappa_\Theta(\{y\in Y\})$ and $\{x\in X\}$ at the same time.

\subsection{Nonlinear phase condition and statement of the result}

We will assume the following \emph{nonlinear phase condition}:
\begin{equation}
  \label{e:NUP-curved}
\partial_x\partial_y\log|\partial_x\partial_y\Theta|\neq 0\quad\text{on }\mathcal U_{\mathcal B}.
\end{equation}
This is the key novel assumption of our uncertainty principle.
Note that it fails for the phase function~$\Theta(x,y)=-xy$ and in fact for any phase function $\Theta$
such that the symplectomorphism
$\varkappa_\Theta$ given by~\eqref{e:kappa-Theta-NUP} is linear; this justifies the term `nonlinear phase'.
In the applications to hyperbolic surfaces, such as the one in~\cite{Cohen-Li-Tao}, one uses the phase function $\Theta(x,y)=w\log|x-y|$ (where $w>0$, $x,y$ are on the unit circle,
and $|x-y|$ is the Euclidean distance) and this phase function
satisfies the assumption~\eqref{e:NUP-curved}.
We further discuss the assumption~\eqref{e:NUP-curved} in~\S\ref{s:NUP-discussion} below.

We can now state the main result of this paper. We use the notation introduced in~\S\ref{s:sets-notation} below. In particular, for a set $Y\subset\mathbb R$ and $r>0$,
we denote by $\mathbf M_r(Y)$ the supremum of the Lebesgue measure $|Y\cap I|$ where
$I$ is an $r$-interval.
\begin{theorem}[Nonlinear Uncertainty Principle]
  \label{t:NUP}
For each $\zeta\in (0,0.1]$ there exists $\beta\in (0,\zeta/2]$ such that for all $\epsilon\in (0,0.1\beta]$
and $h\in (0,1]$ the following holds. Assume that $\mathcal B$ is an operator of the form~\eqref{e:B-NUP-def},
satisfying the conditions~\eqref{e:NUP-Theta-chi}, \eqref{e:NUP-nondegenerate}, and~\eqref{e:NUP-curved}.
Assume next that $C_0>0$, $\mathfrak m_Y>0$, and $X,Y\subset \mathbb R$ are two sets satisfying the following conditions:
\begin{align}
  \label{e:NUP-XY-1}
\mathbf M_{|Y(h^{\frac12})|}(Y)
&\leq C_0h^{\zeta}\mathfrak m_Y,
\\
  \label{e:NUP-XY-2}
\mathbf M_{h^{\beta}}(Y)
&\leq C_0h^{33\epsilon}\mathfrak m_Y,
\\
  \label{e:NUP-XY-3}
\mathbf M_r(Y)
&\leq C_0r^{1-\zeta}h^{\zeta-\beta}
\quad\text{for all }r\in [h,h^{\frac12}],
\\
  \label{e:NUP-XY-vol}
|X|\cdot \mathfrak m_Y
&\leq C_0h^{1-\epsilon},\quad
|Y|\leq \mathfrak m_Y.
\end{align}
Then we have the following estimate:
\begin{equation}
  \label{e:NUP}
\|\indic_X\mathcal B\indic_Y\|_{L^2\to L^2}\leq C h^\epsilon.
\end{equation}
Here the constant $C$ depends
only on $C_0$, $\mathcal U_{\mathcal B}$, $\mathcal K_{\mathcal B}$, some
$(\zeta,\epsilon)$-dependent $C^\infty$-seminorms of $\Theta,\chi_{\mathcal B}$,
and upper bounds on $|\partial_x\partial_y\Theta|^{-1}$
and $|\partial_x\partial_y\log|\partial_x\partial_y\Theta||^{-1}$.
\end{theorem}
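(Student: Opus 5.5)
We argue by a $TT^*$ computation combined with Shmerkin's nonlinear projection theorem, which supplies the decay from the nonlinearity condition~\eqref{e:NUP-curved}. Cover $\mathcal K_{\mathcal B}$ by finitely many small boxes and localize with a partition of unity; this reduces us to the case where $\chi_{\mathcal B}$ is supported in a small box on which $\Theta$ is close to its second-order Taylor expansion, up to the nonlinear correction. Write $\|\indic_X\mathcal B\indic_Y\|^2=\|\indic_Y\mathcal B^*\indic_X\mathcal B\indic_Y\|$. The kernel of $\mathcal B^*\indic_X\mathcal B$ is
\[
K(y,y')=(2\pi h)^{-1}\int_X e^{\frac ih(\Theta(x,y')-\Theta(x,y))}\bar\chi_{\mathcal B}(x,y)\chi_{\mathcal B}(x,y')\,dx.
\]
Expanding in $y'-y$, the phase is $\approx (y'-y)\,\partial_y\Theta(x,y)$. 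Integration over $x\in X$ is therefore an exponential sum over the pushforward of $X$ under $x\mapsto\partial_y\Theta(x,y)$, at frequency $(y'-y)/h$. The natural bound is via Schur's test, which leads to the estimate $\sup_y\int_Y|K(y,y')|\,dy'$.

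The naive bound $|K|\le h^{-1}|X|$ gives $\|\cdot\|^2\lesssim h^{-1}|X||Y|\le C_0h^{-\epsilon}$ by~\eqref{e:NUP-XY-vol}, which is borderline. To gain $h^{2\epsilon}$ we instead estimate the bilinear form $\langle \indic_X\mathcal B\indic_Y f,\indic_X\mathcal B \indic_Y g\rangle$ by decomposing $Y$ at scale $h^{1/2}$, the natural scale of~\eqref{e:NUP-XY-1}, and using the branching hypotheses~\eqref{e:NUP-XY-1}--\eqref{e:NUP-XY-3}. There are two regimes of pairs $(y,y')$.

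\emph{Near pairs.} For $|y-y'|\le h^{1-\beta}$ (roughly), the phase does not oscillate. The contribution is controlled by $\mathbf M_r(Y)$ at small scales via~\eqref{e:NUP-XY-3} and~\eqref{e:NUP-XY-2}, which give a factor $h^{33\epsilon}$ times the mass $\mathfrak m_Y$.

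\emph{Far pairs.} For $|y-y'|\ge h^{1-\beta}$, we must show that $\int_X e^{\frac ih\Phi_{y,y'}(x)}\,dx$, with $\Phi_{y,y'}(x)=\Theta(x,y')-\Theta(x,y)$, is small for most pairs $(y,y')\in Y^2$. The key step is a Fourier-decay or energy formulation: after Cauchy--Schwarz, bound
\[
\iint_{Y\times Y}\Bigl|\int_X e^{\frac ih\Phi_{y,y'}(x)}dx\Bigr|^2
=\int_{X\times X}\Bigl|\int_Y\!\!\int_Y \cdots\Bigr|,
\]
which reduces to counting how often $\Phi_{y,y'}(x)-\Phi_{y,y'}(\tilde x)$ lies in an $h$-interval. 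Equivalently, we count $h$-incidences of the family of curves $\{(y,y')\mapsto \Theta(x,y')-\Theta(x,y)-\Theta(\tilde x,y')+\Theta(\tilde x,y)\}$, that is, nonlinear projections of the set $Y\times Y$ (discretized at scale $h^{1/2}$ and rescaled) along a family parametrized by $(x,\tilde x)$. Condition~\eqref{e:NUP-curved} is exactly the transversality/curvature needed so that these projections form a genuinely nonlinear family, not conjugate to linear projections. Shmerkin's nonlinear projection theorem then says that for all but an exceptional set of parameters, the projected measure has no concentration beyond $h^{\text{small}}$. Its hypotheses are Frostman-type non-concentration of the measure on $Y$ at scales between $h$ and $h^{1/2}$, supplied by~\eqref{e:NUP-XY-3}, together with the largeness condition~\eqref{e:NUP-XY-1}. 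The exceptional parameter set has small measure, and its contribution is absorbed using $|X|\mathfrak m_Y\le C_0h^{1-\epsilon}$. The remaining parameters give a gain $h^{\beta'}$ with $\beta'$ depending only on $\zeta$, which is how $\beta$ is chosen.

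\emph{Main obstacle.} The hardest step is setting up the reduction to Shmerkin's theorem. This requires rescaling the $h^{1/2}$-pieces so that the phase differences become a smooth nonlinear family with uniform bounds, checking that~\eqref{e:NUP-curved} gives the required nondegeneracy of the family uniformly in the seminorms, and balancing exponents so that the near-pair loss $h^{-\epsilon}$ and the exceptional-set loss are beaten. I would first try working with $(x,\tilde x)$ as parameters and $\mathfrak m_Y^{-1}\indic_Y\,dy$ as the measure, then verify the exponent bookkeeping against the constants $33\epsilon$ and $\beta\le\zeta/2$ in~\eqref{e:NUP-XY-2}--\eqref{e:NUP-XY-3}.
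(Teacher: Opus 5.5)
\textbf{Gap: the far-pair step has no working mechanism.} Your near-pair step is essentially fine. Schur's test with \eqref{e:NUP-XY-2} and \eqref{e:NUP-XY-vol} bounds the part of $\indic_Y\mathcal B^*\indic_X\mathcal B\indic_Y$ with $|y-y'|\le h^{\beta}$ by $Ch^{-1}|X|\,\mathbf M_{h^{\beta}}(Y)\le Ch^{32\epsilon}$. You also have the right tool and the right source of transversality.

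After your Cauchy--Schwarz you face $\iint_{X^2}\iint_{Y^2}e^{\frac ih(\Phi_{y,y'}(x)-\Phi_{y,y'}(\tilde x))}$ with every variable confined to a rough set. There is no variable to integrate by parts in, and ``counting how often $\Phi_{y,y'}(x)-\Phi_{y,y'}(\tilde x)$ lies in an $h$-interval'' is not an upper bound for such an oscillatory integral.

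To get a counting problem you must smooth one variable at scale $h^{\frac12}$. This is the scale at which the phase is linear in $x$ up to $\mathcal O(h)$ while integration by parts still gains (cf.\ the remark after Lemma~\ref{l:Linfty-L4-interval}). In the paper's $L^4$ formulation, only $\vec y\in Y^4$ with $|\partial_x\vec\Theta(x,\vec y)|\lesssim h^{\frac12-\epsilon}$ then survive, i.e.\ additive quadruples of $\partial_x\Theta(x,Y)$ at scale $h^{\frac12}$. That is what produces a \emph{one}-parameter family $G(x,\cdot)$ from \eqref{e:G-def} at a \emph{fixed} resolution $h^{\frac12}$. Your family indexed by $(x,\tilde x)$ would need resolution $h/|x-\tilde x|$, which varies and degenerates near the diagonal.

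Moreover, the smoothing cannot be done inside an $L^2$-type norm. The hypotheses allow $X(h^{\frac12})\supset[0.1,0.9]$ (e.g.\ the progressions of \S\ref{s:NUP-discussion}). Then $\chi\mathcal B\indic_Y$, with $\chi$ a cutoff to $X(h^{\frac12})$, is essentially $\mathcal B\indic_Y$. Its operator norm is in general of size $\sim 1$, and hence so is every Schatten norm, in particular the Hilbert--Schmidt norm of $\indic_Y\mathcal B^*\chi^2\mathcal B\indic_Y$.

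The paper instead thickens only inside a positive integrand, $\int_X|\mathcal B\indic_Yf|^4\le\int\chi|\mathcal B\indic_Yf|^4$ for $f\in L^\infty$ (Lemma~\ref{l:Linfty-L4}). It then recovers the true $|X|$ by H\"older and interpolation with the $L^1\to L^\infty$ bound. In \eqref{e:L2-L2} the thickening loss $|X(h^{\frac12})|$ enters only to the power $\frac18$, and it is paid for by $\mathbf M_{h^{\frac12-\epsilon}}(Y)^{\frac18}$ through \eqref{e:upper-spread-bound-2}.

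\textbf{Misplaced hypotheses for Shmerkin's theorem.} The theorem needs two things.
\begin{enumerate}
\item A single-scale nonconcentration \eqref{e:shmerpro-2} of the projected set at scale $|\widetilde Y|^{\frac12}$, which comes from \eqref{e:NUP-XY-1}.
\item The transversality condition \eqref{e:shmerpro-3} on the \emph{parameter} set. By Lemma~\ref{l:transversality-basic} this holds only away from the diagonal $|y_1-y_2|\lesssim h^{\beta''/2}$, which is discarded using \eqref{e:NUP-XY-2}. Off the diagonal it amounts to nonconcentration of $X(h^{\frac12})$ on scales $h^{\frac12}$ to $1$, namely \eqref{e:NUPa-X-noncon}.
\end{enumerate}

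Theorem~\ref{t:NUP} assumes nothing about $X$ beyond its measure, so the second condition has to be manufactured, and this is where \eqref{e:NUP-XY-3} really enters. Either $\mathbf M_\rho(Y)\,|X|_{h/\rho}<h^{4\epsilon}\rho$ for some $\rho\in[h,1]$, and then the refined volume bound of Lemma~\ref{l:L2-intermediate} (with the roles of $X,Y$ exchanged) already gives the estimate. Or this fails for all $\rho$, and then \eqref{e:NUP-XY-3} at $\rho=h/r$, together with uniformity of $X$, gives \eqref{e:NUPa-X-noncon}.

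This dichotomy, and the transfer of bounds between $X,Y$ and their $h^{\frac12}$-neighborhoods, require first splitting $X,Y$ into uniform pieces (Lemma~\ref{l:uniform-set-decomposition}). Your plan has neither step. It also uses \eqref{e:NUP-XY-3} as a Frostman condition on $Y$ below scale $h^{\frac12}$, which a projection theorem applied at resolution $h^{\frac12}$ cannot see.
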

Theorem~\ref{t:NUP} is reduced in~\S\ref{s:NUP-reductions} to an additive energy
bound, Proposition~\ref{l:NUP-additive}. The latter is proved in~\S\ref{s:NUP-additive-proof}
using a projection theorem of Shmerkin, Theorem~\ref{t:shmerkin-projection}.

We give a few remarks on the objects and assumptions in the theorem:
\begin{itemize}
\item The important special case is when $\mathfrak m_Y=|Y|$; in fact,
later in~\eqref{e:reverse-volume-XY} we essentially reduce to this case.
The advantage of the constant $\mathfrak m_Y$ is monotonicity under inclusion: if some sets $X,Y$ satisfy the conditions~\eqref{e:NUP-XY-1}--\eqref{e:NUP-XY-vol}, then any subsets $X'\subset X,Y'\subset Y$
also satisfy these conditions with the same values of $\zeta,\beta,\epsilon,C_0,h,\mathfrak m_Y$, as follows from~\eqref{e:monotone-set} and~\eqref{e:M-r-monotone}.
\item If the neighborhood $Y(h^{\frac12})$ is an interval,
then $\mathbf M_{|Y(h^{\frac12})|}(Y)=|Y|$. The assumption~\eqref{e:NUP-XY-1} is a quantitative way of ruling out this situation.
\item If $Y$ is contained in an $h^{\beta}$-interval, then $\mathbf M_{h^{\beta}}(Y)=|Y|$.
The assumption~\eqref{e:NUP-XY-2} is a quantitative way of requiring the set $Y$ to be spread out
on scales above~$h^{\beta}$.
\item If $Y$ contains an $r$-interval with $r\gg h^{1-\beta/\zeta}$,
then $\mathbf M_r(Y)=r\gg r^{1-\zeta}h^{\zeta-\beta}$. The assumption~\eqref{e:NUP-XY-3} in particular
rules out this situation.
\item The volume bound~\eqref{e:uncertainty-volume-2} together with~\eqref{e:NUP-XY-vol}
gives the bound $\|\indic_X\mathcal B\indic_Y\|_{L^2\to L^2}\leq Ch^{-\epsilon/2}$, which
is just slightly worse than~\eqref{e:NUP}. That is, to get~\eqref{e:NUP} we only
need a small improvement over the volume bound.
\end{itemize}
In~\S\ref{s:NUP-examples} we give some examples of situations to which Theorem~\ref{t:NUP} applies.
In particular, we discuss its relation to fractal uncertainty principles,
and study the case of arithmetic progressions. The latter (as explained in~\S\ref{s:NUP-discussion})
gives a case where Theorem~\ref{t:NUP} fails for the linear phase $\Theta(x,y)=-xy$.

\subsection{Acknowledgements and disclosures}

The present paper is part of a longer work in progress~\cite{ehalf}. We decided to release it before
the other parts of~\cite{ehalf} are completed, since it is also used in the work
of Cohen--Li--Tao~\cite{Cohen-Li-Tao}.

There is also overlap (both in the ideas used and in the text, specifically in~\S\ref{s:NUP-reductions})
with our recent work on the fractal uncertainty principle, \cite{fup12}. Both papers
use essentially the same reduction to an additive energy quantity via an $L^4$ norm bound.
However, the present paper makes weak branching assumptions on the sets~$X$, $Y$ and obtains a small inexplicit
uncertainty exponent, while \cite{fup12} makes strong Ahlfors--David regularity assumptions on $X$, $Y$
and obtains an explicit regularity exponent (namely, $\frac{1}{64}$ for sets of dimension $\frac12$).
There is some similarity between how the additive energy bound is proved in the present paper
and in~\cite{fup12}, however the details of the proofs are different.
In particular, the present paper uses Shmerkin's nonlinear projection theorem
and~\cite{fup12} uses the recent two-ends Furstenberg theorem of O'Regan--Wu--Yi.

When the work~\cite{ehalf} is finished, we are planning to make the present paper obsolete
(as it will be a part of~\cite{ehalf}) and revise~\cite{fup12} to replace part of it with
references to~\cite{ehalf}. In particular, we do not intend to submit for publication
papers with overlapping text.

Alex Cohen was supported by a Clay Research Fellowship. Semyon Dyatlov was supported by
the NSF grant DMS-2400090, the Simons Fellowship SFI-MPS-SFM-00005584,
and has enjoyed the hospitality of the Mathematical Sciences Research Institute in Berkeley, California, during Fall 2024 (supported by the NSF grant DMS-1928930).

AI was used in a minor way, specifically to find the reference~\cite{Etayo-bilagrangian}.

\section{Uniform trees and sets}

A subset $X\subset\mathbb R$ is called \emph{uniform} if
for any two intervals $J,J'$ of the same length centered at points in~$X$, we have
$|X\cap J|\sim |X\cap J'|$.
(See Definition~\ref{d:set-uniformity} below for details.)
We think of a uniform set as having at each scale a well-defined dimension
which does not change from point to point. However, the dimension can depend on the scale,
so uniformity is a weaker property than Ahlfors--David regularity of dimension $\delta\in [0,1]$:
the latter requires that $|X\cap J|\sim |X|\cdot |J|^\delta$.
In this section we
describe a decomposition of a set $X$ into a union of uniform sets
and a small remainder. This follows the papers of Bourgain~\cite[\S2]{Bourgain-Sum-Product}
and Keleti--Shmerkin~\cite[\S3.1]{Keleti-Shmerkin}. 

We start with the uniform decomposition in the setting of trees (\S\ref{s:uniform-trees})
and later reduce sets to trees by a discretization procedure (\S\ref{s:uniform-sets}).

\subsection{Uniform trees}
\label{s:uniform-trees}

We first discuss the notion of uniformity of trees and show how to split a uniform tree off of
an arbitrary tree (Lemma~\ref{l:single-pigeon}).

\subsubsection{Basic definitions}
\label{s:tree-basics}

Our trees will be associated to sets of words over an alphabet; this provides additional
structure which is not needed in this section but will be present in our applications.
(This is related to the structure called \emph{trie} in computer science.)

Let $\mathcal W$ be a nonempty finite set and $R\geq 0$ be an integer.
Denote by $\mathcal W^R$ the set of words of length $R$ over the alphabet $\mathcal W$:
\begin{equation}
  \label{e:words-def}
\mathcal W^R:=\{\mathbf w=w_0\dots w_{R-1}\mid w_0,\dots,w_{R-1}\in\mathcal W\}.
\end{equation}
For two words $\mathbf w'\in\mathcal W^{T'},\mathbf w''\in\mathcal W^{T''}$, denote
by $\mathbf w'\mathbf w''\in\mathcal W^{T'+T''}$ their concatenation. 

For $0\leq T\leq R$, define the map
\begin{equation}
  \label{e:pi-T-def}
\pi_T:\mathcal W^R\to \mathcal W^T,\quad
\pi_T(w_0\dots w_{R-1})=w_0\dots w_{T-1}.
\end{equation}
If $\mathbf w\in\mathcal W^T$, $\mathbf w'\in\mathcal W^{T'}$ with $T'\leq T$,
then we say
$$
\mathbf w'\prec \mathbf w
$$
if $\mathbf w'$ is a prefix of~$\mathbf w$, that is $\mathbf w'=\pi_{T'}(\mathbf w)$.

A \emph{tree of depth $R$ over the alphabet $\mathcal W$} is defined to be a nonempty subset $\mathbf X\subset\mathcal W^R$.
For such a tree and each integer $T\in [0,R]$, define the set of \emph{nodes
of depth $T$} as the set $\pi_T(\mathbf X)\subset \mathcal W^T$ of all prefixes of length $T$ of the words in $\mathbf X$.
Note that $\pi_0(\mathbf X)$ consists of the empty word and $\pi_R(\mathbf X)=\mathbf X$.
We can draw the tree $\mathbf X$ by placing all the nodes in $\pi_T(\mathbf X)$
in a line at depth $T$ and connecting each $w_0\dots w_{T-1}\in\pi_T(\mathbf X)$,
$T\geq 1$, to its parent $w_0\dots w_{T-2}$.

For $\mathbf w'\in\pi_T(\mathbf X)$, define the \emph{subtree rooted at $\mathbf w'$}
as the set of all elements of $\mathbf X$ having $\mathbf w'$ as a prefix:
\begin{equation}
  \label{e:subtree-rooted}
\mathbf X_{\succ\mathbf w'}:=\pi_T^{-1}(\mathbf w')\cap \mathbf X=\{\mathbf w\in\mathbf X\mid \mathbf w'\prec\mathbf w\}.
\end{equation}
For later use, we introduce the following notation for the size of the largest subtree rooted at a node of depth $T\in [0,R]$:
\begin{equation}
  \label{e:M-T-tree-def}
\mathbf M_T(\mathbf X):=\max_{\mathbf w'\in\pi_T(\mathbf X)}\#(\mathbf X_{\succ\mathbf w'}).
\end{equation}
If $\widetilde{\mathbf Y}\subset\pi_T(\mathbf X)$ is a nonempty subset, we define the \emph{pruning of $\mathbf X$ at depth $T$ by the set~$\widetilde{\mathbf Y}$} as the tree
$$
\pi_T^{-1}(\widetilde{\mathbf Y})\cap \mathbf X=\bigsqcup_{\mathbf w'\in\widetilde{\mathbf Y}}\mathbf X_{\succ\mathbf w'}.
$$
We define a \emph{tree measure} (of depth $R$) as a pair $(\mathbf X,\mu)$
where $\mathbf X$ is a tree of depth $R$ and $\mu:\mathbf X\to [0,\infty)$
is not identically~0. For a subset $\mathbf Y\subset\mathbf X$, define
\begin{equation}
  \label{e:mu-of-a-set}
\mu(\mathbf Y):=\sum_{\mathbf w\in\mathbf Y}\mu(\mathbf w).
\end{equation}

\subsubsection{Uniformity of trees}

Let $0<r\leq 1$ and $0\leq T\leq R$. We say that a tree measure $(\mathbf X,\mu)$ is \emph{$r$-uniform at depth $T$}, if the measures of subtrees rooted at depth $T$ are comparable to each other,
more precisely
\begin{equation}
\label{e:uniform-def}
\min_{\mathbf w'\in \pi_T(\mathbf X)}\mu(\mathbf X_{\succ\mathbf w'})
\geq r\max_{\mathbf w'\in \pi_T(\mathbf X)}\mu(\mathbf X_{\succ\mathbf w'}).
\end{equation}
Note that this is automatically true when $T=0$.
We say $(\mathbf X,\mu)$ is \emph{completely $r$-uniform} if it is $r$-uniform
at every depth.

The next lemma shows that if a tree measure is $r$-uniform at some depth $T'$,
then pruning it at any smaller depth $T\leq T'$ does not destroy this property:
indeed, this pruning completely keeps some subtrees rooted at depth $T'$
and completely eliminates the others.
\begin{lemma}
\label{l:still-uniform}
Assume that a tree measure $(\mathbf X,\mu)$ is $r$-uniform at depth $T'$.
Then any pruning of $\mathbf X$ at depth $T\in [0,T']$
is also $r$-uniform at depth $T'$ with respect to $\mu$.
\end{lemma}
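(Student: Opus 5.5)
The plan is to show that pruning at depth $T\le T'$ leaves the depth-$T'$ subtrees either fully intact or removed. Let $\widetilde{\mathbf Y}\subset\pi_T(\mathbf X)$ be nonempty, and let $\mathbf X'=\pi_T^{-1}(\widetilde{\mathbf Y})\cap\mathbf X$ be the pruned tree. Since $\widetilde{\mathbf Y}$ is nonempty and consists of prefixes of words in $\mathbf X$, the tree $\mathbf X'$ is nonempty. The measure $\mu|_{\mathbf X'}$ is not identically zero, because $r>0$ and uniformity at depth $T'$ force every depth-$T'$ subtree of $\mathbf X$ to have positive measure; in particular $(\mathbf X',\mu)$ is a tree measure.

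The key step is the following observation. Take $\mathbf w'\in\pi_{T'}(\mathbf X')$. Since $T\le T'$, every word $\mathbf w\in\mathbf X$ with $\mathbf w'\prec\mathbf w$ satisfies $\pi_T(\mathbf w)=\pi_T(\mathbf w')$. Moreover $\pi_T(\mathbf w')\in\widetilde{\mathbf Y}$, because $\mathbf w'$ is the prefix of some word of $\mathbf X'$. Hence
$$
\mathbf X'_{\succ\mathbf w'}=\mathbf X_{\succ\mathbf w'}.
$$
In other words, the subtrees of $\mathbf X'$ rooted at depth $T'$ are exactly the subtrees of $\mathbf X$ rooted at the nodes of the subset $\pi_{T'}(\mathbf X')\subset\pi_{T'}(\mathbf X)$.

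It remains to compare minimum and maximum. Over the smaller index set $\pi_{T'}(\mathbf X')$ we have
$$
\min_{\mathbf w'\in\pi_{T'}(\mathbf X')}\mu(\mathbf X'_{\succ\mathbf w'})
\ \geq\ \min_{\mathbf w'\in\pi_{T'}(\mathbf X)}\mu(\mathbf X_{\succ\mathbf w'})
\ \geq\ r\max_{\mathbf w'\in\pi_{T'}(\mathbf X)}\mu(\mathbf X_{\succ\mathbf w'})
\ \geq\ r\max_{\mathbf w'\in\pi_{T'}(\mathbf X')}\mu(\mathbf X'_{\succ\mathbf w'}).
$$
The middle inequality is the uniformity assumption~\eqref{e:uniform-def} at depth $T'$, and the outer two hold because restricting to a subset can only raise a minimum and lower a maximum. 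The chain is exactly~\eqref{e:uniform-def} for $\mathbf X'$ at depth $T'$.

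There is no real obstacle here. The only point needing care is the identity $\mathbf X'_{\succ\mathbf w'}=\mathbf X_{\succ\mathbf w'}$, which relies on $T\le T'$ so that the depth-$T$ prefix of a word is determined by its depth-$T'$ prefix.
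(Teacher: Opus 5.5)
Your proposal is correct and follows essentially the same route as the paper: show that each depth-$T'$ subtree of the pruned tree coincides with the corresponding subtree of $\mathbf X$, then note that minimizing and maximizing over a subset of the original values preserves~\eqref{e:uniform-def}. Your additional check that $\mu$ is not identically zero on the pruning (so that it is a genuine tree measure) is a small detail the paper leaves implicit.
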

\begin{proof}
Let $\widetilde{\mathbf Y}\subset\pi_T(\mathbf X)$ be a nonempty set and define the pruning
$$
\mathbf Y:=\pi_T^{-1}(\widetilde{\mathbf Y})\cap\mathbf X.
$$
We need to prove that $(\mathbf Y,\mu)$ is $r$-uniform at depth $T'$.
For each $\mathbf w'\in \pi_{T'}(\mathbf Y)$
we have equality of subtrees
\begin{equation}
  \label{e:stilbra-1}
\mathbf Y_{\succ\mathbf w'}=\mathbf X_{\succ\mathbf w'}.
\end{equation}
Indeed, let $\mathbf w''\in\mathcal W^T$ be the prefix of $\mathbf w'$ of length~$T$.
Then $\mathbf X_{\succ\mathbf w'}\subset\pi_T^{-1}(\mathbf w'')$
and $\mathbf Y_{\succ\mathbf w'}=\mathbf X_{\succ\mathbf w'}\cap \pi_T^{-1}(\widetilde{\mathbf Y})$.
We have two cases:
\begin{itemize}
\item either $\mathbf w''\in\widetilde{\mathbf Y}$, then $\mathbf Y_{\succ\mathbf w'}=\mathbf X_{\succ\mathbf w'}$ as needed;
\item of $\mathbf w''\notin\widetilde{\mathbf Y}$, then $\mathbf Y_{\succ\mathbf w'}=\emptyset$,
contradicting the assumption that $\mathbf w'\in\pi_{T'}(\mathbf Y)$.
\end{itemize}
Now \eqref{e:stilbra-1} implies that the set $\{\mu(\mathbf Y_{\succ\mathbf w'})\mid \mathbf w'\in \pi_{T'}(\mathbf Y)\}$
is contained in the set $\{\mu(\mathbf X_{\succ\mathbf w'})\mid \mathbf w'\in\pi_{T'}(\mathbf X)\}$,
so if~\eqref{e:uniform-def} holds for $\mathbf X$ it also holds for $\mathbf Y$.
\end{proof}

\subsubsection{Splitting off a uniform tree}
\label{s:branching-dec-1}

We now show that for each tree measure we can choose a subtree of measure bounded below
which is $r$-uniform. We start with a basic pigeonholing lemma:
\begin{lemma}
  \label{l:basic-pigeon}
Let $S$ be a finite set and $\mu:S\to [0,\infty)$ be a function which
is not identically~0. Fix $r\in (0,\tfrac 12]$. Then there exists a set $Q\subset S$ such that,
using the notation~\eqref{e:mu-of-a-set},
\begin{itemize}
\item $\min_{w\in Q}\mu(w)\geq r\max_{w\in Q}\mu(w)$, and
\item $\mu(Q)\geq \big(6+2{\log\#(S)\over\log(1/r)}\big)^{-1}\mu(S)$.
\end{itemize}
\end{lemma}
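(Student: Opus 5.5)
We use dyadic-type pigeonholing in the ratio $r$. Discard the zero-mass points (they contribute nothing). Put $M:=\max_S\mu>0$ and $N:=\#(S)$. For each integer $k\geq 0$ define the level set
$$
S_k:=\{w\in S\mid r^{k+1}M<\mu(w)\leq r^kM\}.
$$
Any two values in $S_k$ differ by a factor at most $r^{-1}$, so each $S_k$ satisfies the first bullet. It therefore suffices to find one $k$ with $\mu(S_k)$ large, and then take $Q:=S_k$.

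First discard the small values. Let $K:=\lceil \log N/\log(1/r)\rceil$. The levels $k\geq K+1$ consist of points with $\mu(w)\leq r^{K+1}M\leq rM/N$. Their total mass is at most $N\cdot rM/N=rM\leq \tfrac12 M\leq\tfrac12\mu(S)$, using $r\leq\tfrac12$ and $M\leq\mu(S)$. Hence the levels $k=0,\dots,K$ carry total mass at least $\tfrac12\mu(S)$.

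There are $K+1\leq 2+\log N/\log(1/r)$ such levels, so by pigeonholing one of them has
$$
\mu(S_k)\geq\frac{\mu(S)}{2(K+1)}\geq\Big(4+2\frac{\log N}{\log(1/r)}\Big)^{-1}\mu(S).
$$
This is stronger than the claimed constant $6$. The extra slack absorbs rounding, for instance if one prefers half-open intervals or wants to avoid the ceiling.

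The only delicate step is the choice of the cutoff depth $K$. It must be large enough that the tail mass is at most $\tfrac12\mu(S)$, yet small enough that $K+1$ is controlled by $\log N/\log(1/r)$ plus a constant. The choice $r^{K}\leq 1/N$ does both. The rest is bookkeeping, and the case $N=1$ is trivial: take $Q=S$.
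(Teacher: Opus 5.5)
Your proof is correct and is essentially the paper's argument: geometric level sets with ratio $r$, a tail cutoff at depth about $\log\#(S)/\log(1/r)$ that carries at most half the mass, and pigeonholing among the remaining levels. The only difference is bookkeeping. You measure levels relative to $\max\mu$ and bound the tail pointwise (at most $\#(S)$ points, each of mass at most $r\max\mu/\#(S)$). The paper instead normalizes $\mu(S)=1$ and sums the bounds $\mu(Q_n)\le r^n\#(S)$ over $n\ge n_0$, which is why you obtain the constant $4$ in place of $6$.
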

\begin{proof}
Dividing $\mu$ by $\mu(S)$, we may assume that $\mu(S)=1$.
For an integer $n\geq 0$, define the set
$$
Q_n:=\{w\in S\mid r^{n+1}<\mu(w)\leq r^n\}.
$$
Then $\bigsqcup_{n\geq 0}Q_n=\{w\in S\mid \mu(w)>0\}$ and thus
$$
1=\sum_{n\geq 0}\mu(Q_n).
$$
Since each $Q_n$ has no more than $\#(S)$ elements, we have
$\mu(Q_n)\leq r^n \#(S)$. Put
$$
n_0:=\bigg\lceil{\log(4\#(S))\over \log (1/r)}\bigg\rceil\leq 3+{\log\#(S)\over \log(1/r)}.
$$
Then
$$
\sum_{n\geq n_0}\mu(Q_n)\leq \#(S)\sum_{n\geq n_0}r^n\leq 2\#(S)r^{n_0}\leq\tfrac 12.
$$
Therefore $\sum_{0\leq n<n_0}\mu(Q_n)\geq \frac 12$, and there exists
$n$ such that $\mu(Q_n)\geq \frac 1{2n_0}$. It remains to put $Q:=Q_n$
for such a choice of $n$.
\end{proof}
We can now give the main result of this subsection:

\begin{figure}
\includegraphics[scale=0.75]{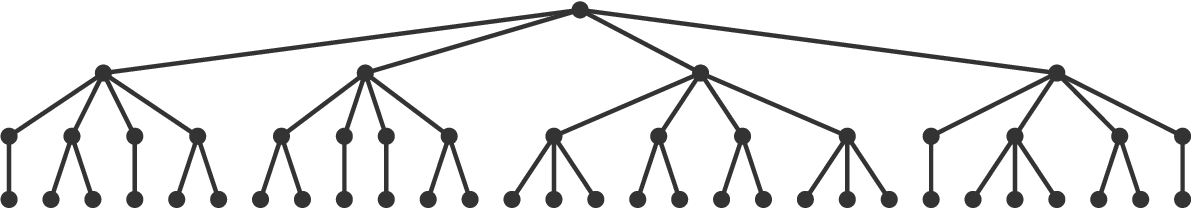}
\vskip.2in
\includegraphics[scale=0.75]{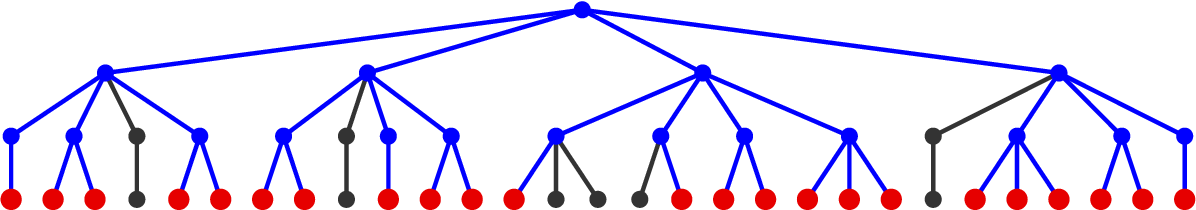}
\vskip.2in
\includegraphics[scale=0.75]{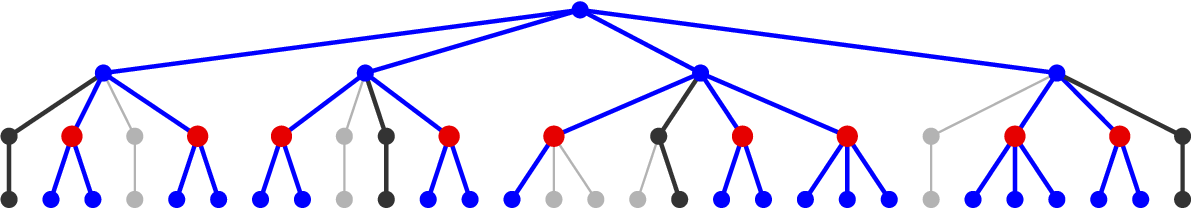}
\vskip.2in
\includegraphics[scale=0.75]{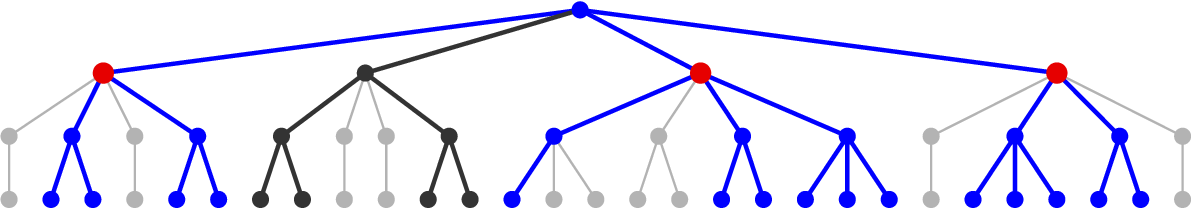}
\vskip.2in
\includegraphics[scale=0.75]{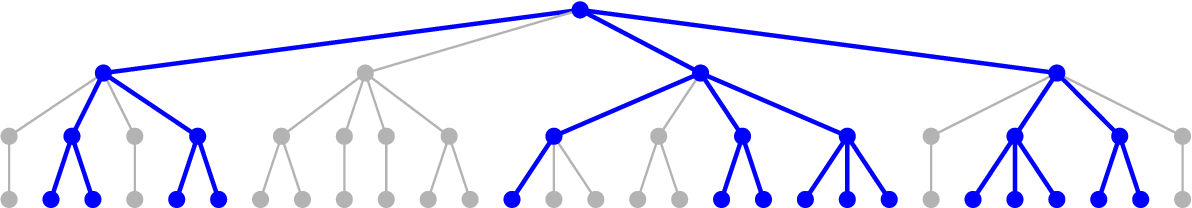}
\caption{An example of the construction of the trees
$\mathbf Y^T$ in Lemma~\ref{l:single-pigeon}, where we put $R:=3$.
The top is the original tree $\mathbf X$. The next picture shows the tree $\mathbf Y^T$ for $T=3$
(red/blue).
The next 2 pictures
show the trees $\mathbf Y^{T+1}$ (in black) for $T=2,1$, the sets $\widetilde{\mathbf Y}^T$ (in red), and the trees $\mathbf Y^T$ (in blue).
At the bottom is the final tree $\mathbf Y=\mathbf Y^1$. At each step $T=R,\dots,1$,
we prune the tree $\mathbf Y^{T+1}$ at depth $T$ by
a set constructed in Lemma~\ref{l:basic-pigeon} to get a tree $\mathbf Y^T$ which is $r$-uniform
at depth~$T$, while also keeping it $r$-uniform at larger depths by Lemma~\ref{l:still-uniform}
and retaining at least $\big(6+2R{\log \#(\mathcal W)\over\log(1/r)}\big)^{-1}$ fraction of the measure $\mu$.}
\label{f:tree-pruning}
\end{figure}

\begin{lemma}[Splitting off a uniform tree]
\label{l:single-pigeon}
Let $(\mathbf X,\mu)$ be a tree measure (with $\mathbf X\subset \mathcal W^R$) and let $r\in (0,\frac12]$. Then there exists a subset $\mathbf Y\subset \mathbf X$ such that $(\mathbf Y,\mu)$ is completely $r$-uniform and
\begin{equation}
  \label{e:single-pigeon}
\mu(\mathbf Y)\geq \Big(6+2R{\log \#(\mathcal W)\over\log(1/r)}\Big)^{-R}\mu(\mathbf X).
\end{equation}
\end{lemma}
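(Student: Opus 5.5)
We construct $\mathbf Y$ by backward induction on the depth, exactly as depicted in Figure~\ref{f:tree-pruning}. We build trees $\mathbf X=\mathbf Y^{R+1}\supset\mathbf Y^R\supset\dots\supset\mathbf Y^1=:\mathbf Y$. Each $\mathbf Y^T$ will be a pruning of $\mathbf Y^{T+1}$ at depth $T$, and we maintain the inductive hypothesis that $(\mathbf Y^T,\mu)$ is $r$-uniform at every depth $T'\in[T,R]$ with $\mu(\mathbf Y^T)\geq c\,\mu(\mathbf Y^{T+1})$, where
$$
c:=\Big(6+2R{\log \#(\mathcal W)\over\log(1/r)}\Big)^{-1}.
$$
Since $r$-uniformity at depth $0$ is automatic, $\mathbf Y^1$ is then completely $r$-uniform, and multiplying the $R$ measure bounds gives~\eqref{e:single-pigeon}.

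For the inductive step, given $\mathbf Y^{T+1}$ with $T\in[1,R]$, we apply Lemma~\ref{l:basic-pigeon}. We take $S:=\pi_T(\mathbf Y^{T+1})$ and the function $\nu(\mathbf w'):=\mu(\mathbf Y^{T+1}_{\succ\mathbf w'})$. This function is not identically zero, because $\nu(S)=\mu(\mathbf Y^{T+1})>0$ by induction; the base case holds because $\mu$ is not identically zero on $\mathbf X$. Since $S\subset\mathcal W^T$, we have $\log\#(S)\leq T\log\#(\mathcal W)\leq R\log\#(\mathcal W)$. The lemma therefore gives a set $Q=:\widetilde{\mathbf Y}^T\subset S$ with $\min_Q\nu\geq r\max_Q\nu$ and $\nu(Q)\geq c\,\nu(S)$. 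We set $\mathbf Y^T:=\pi_T^{-1}(\widetilde{\mathbf Y}^T)\cap\mathbf Y^{T+1}$, so that $\mu(\mathbf Y^T)=\nu(Q)\geq c\,\mu(\mathbf Y^{T+1})>0$. In particular $\mathbf Y^T$ is nonempty, hence a tree.

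Next we check uniformity. At depth $T$, the nodes of $\mathbf Y^T$ are exactly those $\mathbf w'\in Q$ for which $\mathbf Y^{T+1}_{\succ\mathbf w'}$ is nonempty. For these nodes $\mathbf Y^T_{\succ\mathbf w'}=\mathbf Y^{T+1}_{\succ\mathbf w'}$, so the min/max condition~\eqref{e:uniform-def} is inherited from $Q$. Here we should first discard from $Q$ any elements with $\nu=0$. This is harmless: if $\max_Q\nu>0$, the min condition forces $\nu>0$ on all of $Q$, and $\max_Q\nu>0$ holds because $\nu(Q)>0$. At depths $T'\in[T+1,R]$, $r$-uniformity persists by Lemma~\ref{l:still-uniform}, since $\mathbf Y^T$ is a pruning of $\mathbf Y^{T+1}$ at depth $T\leq T'$.

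The argument is essentially routine. The only point needing care is this bookkeeping: the set produced by Lemma~\ref{l:basic-pigeon} must consist of nodes actually carrying positive mass, so that the nodes of the pruned tree coincide with $Q$. The other check is the uniform bound $\log\#S\leq R\log\#\mathcal W$, which gives the same constant $c$ at every step.
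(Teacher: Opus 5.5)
Your proposal is correct and matches the paper's proof: bottom-up pruning at depths $T=R,\dots,1$ by applying Lemma~\ref{l:basic-pigeon} to the subtree masses, using Lemma~\ref{l:still-uniform} to keep uniformity at deeper levels, and bounding $\#\pi_T(\mathbf Y^{T+1})\leq\#(\mathcal W)^R$ so that each step loses the same factor. Starting from $\mathbf Y^{R+1}:=\mathbf X$ simply merges the paper's separate depth-$R$ base step into the general step, and the positivity bookkeeping you worry about is automatic, since $\mu(Q)>0$ already forces $\min_Q\nu>0$.
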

\begin{proof}
We prune the tree $\mathbf X$ from the bottom to the top. More precisely,
we define the nested collection of subsets of $\mathbf X$
$$
\mathbf Y^R\supset \mathbf Y^{R-1}\supset\dots\supset \mathbf Y^1
$$
as follows (see also Figure~\ref{f:tree-pruning}):
\begin{itemize}
\item Applying Lemma~\ref{l:basic-pigeon} to the map $\mu:\mathbf X\to [0,\infty)$, we construct a set $\mathbf Y^R\subset\mathbf X$ such that (here we use
that $\#(\mathbf X)\leq \#(\mathcal W)^R$)
\begin{align}
  \label{e:singpig-1}
\min_{\mathbf w\in \mathbf Y^R}\mu(\mathbf w)&\geq r\max_{\mathbf w\in \mathbf Y^R}\mu(\mathbf w),
\\
  \label{e:singpig-2}
\mu(\mathbf Y^R)&\geq \Big(6+2R{\log\#(\mathcal W)\over\log(1/r)}\Big)^{-1}\mu(\mathbf X).
\end{align}
\item Given the set $\mathbf Y^{T+1}$ for some $T\in [1,R-1]$, we construct the set
$\mathbf Y^T$ as follows. Let $\pi_T(\mathbf Y^{T+1})$ be the set of nodes of depth~$T$
in the tree $\mathbf Y^{T+1}$. 
Define the measure
$$
\mu_T:\pi_T(\mathbf Y^{T+1})\to [0,\infty),\quad
\mu_T(\mathbf w')=\mu(\mathbf Y^{T+1}_{\succ\mathbf w'}).
$$
Applying Lemma~\ref{l:basic-pigeon} to the measure $\mu_T$, we construct a set
$\widetilde{\mathbf Y}^T\subset \pi_T(\mathbf Y^{T+1})$ such that
\begin{align}
  \label{e:singpig-1-again}
\min_{\mathbf w'\in \widetilde{\mathbf Y}^T}\mu_T(\mathbf w')
&\geq r\max_{\mathbf w'\in \widetilde{\mathbf Y}^T}\mu_T(\mathbf w'),\\
  \label{e:singpig-2-again}
\mu_T(\widetilde{\mathbf Y}^T)&\geq \Big(6+2R{\log\#(\mathcal W)\over\log(1/r)}\Big)^{-1}\mu_T(\pi_T(\mathbf Y^{T+1})).
\end{align}
Now, define the $\mathbf Y^T$ as the pruning of the tree $\mathbf Y^{T+1}$ at depth
$T$ by the set~$\widetilde{\mathbf Y}^T$:
\begin{equation}
  \label{e:singpig-3}
\mathbf Y^T:=\pi_T^{-1}(\widetilde{\mathbf Y}^T)\cap \mathbf Y^{T+1}.
\end{equation}
Note that $\mu_T(\pi_T(\mathbf Y^{T+1}))=\mu(\mathbf Y^{T+1})$
and $\mu_T(\widetilde{\mathbf Y}^T)=\mu(\mathbf Y^T)$, so~\eqref{e:singpig-2-again} gives
\begin{equation}
\label{e:singpig-2.5-again}
\mu(\mathbf Y^T)\geq \Big(6+2R{\log\#(\mathcal W)\over\log(1/r)}\Big)^{-1}\mu(\mathbf Y^{T+1}).
\end{equation}
Moreover, $\pi_T(\mathbf Y^T)=\widetilde{\mathbf Y}^T$ and for each
$\mathbf w'\in \widetilde{\mathbf Y}^T$, we have
$\mathbf Y^T_{\succ\mathbf w'}=\mathbf Y^{T+1}_{\succ\mathbf w'}$.
\end{itemize}
The tree measure $(\mathbf Y^T,\mu)$ is $r$-uniform at depths
from $T$ to $R$. Indeed, it is $r$-uniform at depth
$T$ by~\eqref{e:singpig-1} and~\eqref{e:singpig-1-again},
and it is $r$-uniform at depths from $T+1$ to $R$ by induction on $T$
and Lemma~\ref{l:still-uniform}.

Now, put $\mathbf Y:=\mathbf Y^1$. Then $(\mathbf Y,\mu)$
is completely $r$-uniform. Moreover, by~\eqref{e:singpig-2} and~\eqref{e:singpig-2.5-again},
we get~\eqref{e:single-pigeon}.
\end{proof}

\subsection{Uniform sets of reals}
\label{s:uniform-sets}

We now introduce the notion of a uniform set and show
how to decompose a general set into a union of uniform sets and a small remainder
(Lemma~\ref{l:uniform-set-decomposition}).

\subsubsection{Notation and basic properties}
\label{s:sets-notation}

We first introduce some basic concepts:
\begin{itemize}
\item We use the sum set notation for $X,Y\subset\mathbb R^d$:
$$
X+Y:=\{x+y\mid x\in X,\ y\in Y\}.
$$
\item For $r>0$, an \emph{$r$-cube} in $\mathbb R^d$ is a translate of $[0,r]^d$.
If $d=1$, we call an $r$-cube an \emph{$r$-interval}.
\item All the subsets of $\mathbb R^d$ we use will be Borel measurable.
\item For $X\subset\mathbb R^d$, denote by $|X|$ its Lebesgue measure.
\item For $X\subset\mathbb R^d$ and $r>0$, define the quantities
\begin{equation}
  \label{e:M-r-X-def}
|X|_r \in [0,\infty],\quad
\mathbf M_r(X)\in [0,r^d]
\end{equation}
as follows: $|X|_r$ is the minimal number of $r$-cubes needed to cover $X$,
and $\mathbf M_r(X)$ is the supremum of $|X\cap Q|$ over all $r$-cubes $Q\subset\mathbb R^d$.
Define also the $r$-neighborhood
\begin{equation}
  \label{e:X-r-def}
X(r):=X+[-r,r]^d.
\end{equation}
\end{itemize}
We record here some basic properties of the above objects:
\begin{itemize}
\item If $X\subset Y\subset\mathbb R^d$ and $r>0$ then
\begin{equation}
  \label{e:monotone-set}
|X|_r\leq |Y|_r,\quad
\mathbf M_r(X)\leq \mathbf M_r(Y).
\end{equation}
\item If $0<r_1\leq r_2$ and $X\subset \mathbb R^d$ then
\begin{align}
  \label{e:r-cover-monotone}
|X|_{r_2}
&\leq |X|_{r_1}\leq \Big\lceil{r_2\over r_1}\Big\rceil^d |X|_{r_2},
\\
  \label{e:M-r-monotone}
\mathbf M_{r_1}(X)
&\leq \mathbf M_{r_2}(X)\leq \Big\lceil{r_2\over r_1}\Big\rceil^d\mathbf M_{r_1}(X).
\end{align}
Indeed, each $r_1$-cube is contained in some $r_2$-cube, and each $r_2$-cube can be covered by $\lceil r_2/r_1\rceil^d$ many $r_1$-cubes.
\item For each $X\subset\mathbb R^d$ and $r>0$ we have
\begin{align}
  \label{e:volume-covering-M}
|X|&\leq |X|_r\,\mathbf M_r(X),
\\
  \label{e:volume-nbhd}
 (r/2)^d|X|_r&\leq |X(r)|\leq (3r)^d|X|_r.
\end{align}
To see the first bound in~\eqref{e:volume-nbhd}, let
$x_1,\dots,x_L\in X$ be a maximal set such that
the $r/2$-cubes centered at $x_j$ do not intersect. These
cubes are contained in~$X(r)$, so comparing the volume we see that
$L(r/2)^d\leq |X(r)|$.
On the other hand, $X$ is covered by the $r$-cubes centered at $x_j$,
so $|X|_r\leq L$.
 
To see the second bound in~\eqref{e:volume-nbhd}, note that
$X$ can be covered by $|X|_r$ many $r$-cubes, so $X(r)$
can be covered by $|X|_r$ many $3r$-cubes.
\end{itemize}

\subsubsection{Uniformity of sets}
  \label{s:set-uniformity}

We can now introduce the notion of a uniform set:
\begin{definition}
  \label{d:set-uniformity}
Let $X\subset \mathbb R$ be a set and $r_1,h\in (0,1]$. We say that $X$ is
\emph{$r_1$-uniform on scales $h$ to~1}, if for each interval $J\subset \mathbb R$
of length $|J|\in [h,1]$
whose center lies in~$X$ we have
\begin{equation}
  \label{e:set-uniformity}
|X\cap J|\geq r_1\mathbf M_{|J|}(X).
\end{equation}
\end{definition}
Most statements in this section are valid for any $r_1,h\in (0,1]$,
but only interesting when $h$ is small and $r_1$ is not too small.

Here are a few examples:
\begin{enumerate}
\item Let $0\leq\alpha\leq 1$ and take $X:=[0,h^\alpha]$ to be an interval.
Then $X$ is $0.01$-uniform on scales $h$ to~1 and we have for all $\tau\in [0,1]$
$$
|X|_{h^\tau}\sim h^{\min(\alpha-\tau,0)},\quad
\mathbf M_{h^\tau}(X)=h^{\max(\tau,\alpha)}.
$$
\item Let $0\leq\alpha\leq 1$ and take $X:=[0,1]\cap (h^\alpha \mathbb Z+[0,h])$
be a union of $h$-intervals arranged in an arithmetic progression of difference~$h^\alpha$.
Then $X$ is $0.01$-uniform on scales $h$ to~1 and we have for all $\tau\in [0,1]$
$$
|X|_{h^\tau}\sim h^{-\min(\tau,\alpha)},\quad
\mathbf M_{h^\tau}(X)\sim h^{1+\min(\tau-\alpha,0)}.
$$
\item Assume that $\delta\in [0,1]$ and $X\subset [0,1]$ is a $\delta$-dimensional Ahlfors--David regular 
set on scales $h$ to~1 with regularity constant $C_R\geq 1$ in the following sense: for each interval $J\subset\mathbb R$
of length $|J|\in [h,1]$ whose center lies in~$X$ we have
\begin{equation}
  \label{e:AD-regular-def}
C_R^{-1} h^{1-\delta}|J|^\delta\leq |X\cap J|\leq C_R h^{1-\delta}|J|^\delta.
\end{equation}
(Other definitions are possible; the present one is tailored to the case when $X$ is a union
of $h$-intervals.)
Then $X$ is $(2C_R^2)^{-1}$-uniform on scales $h$ to~1, and we have for all
$\tau\in [0,1]$
$$
|X|_{h^\tau}\sim h^{-\delta\tau},\quad
\mathbf M_{h^\tau}(X)\sim h^{1-\delta+\delta\tau}.
$$ 
\item The set $X:=[0,h]\sqcup [\tfrac12,1]$ is \textbf{not} $0.01$-uniform on scales~$h$ to~1,
when $h$ is small enough. Indeed, $\mathbf M_{h^{\frac12}}(X)=h^{\frac12}$,
but for $J=[-\frac12h^{\frac12},\frac12h^{\frac12}]$ we have
$|X\cap J|=h$.
\end{enumerate}

\subsubsection{Properties of uniform sets}

We now establish a few properties of uniform sets. These are used in~\S\ref{s:NUP-reduction-proof}.
\begin{lemma}
  \label{l:uniformity-volume}
Assume that $X\subset\mathbb R$ is $r_1$-uniform on scales $h$ to~1, $\tilde r\in [h,1]$, and $J\subset\mathbb R$
is an interval. Then
\begin{equation}
  \label{e:uniformity-volume-refined}
\mathbf M_{|J|+\tilde r}(X)\geq\tfrac12 r_1|X\cap J|_{\tilde r}\,\mathbf M_{\tilde r}(X).
\end{equation}
\end{lemma}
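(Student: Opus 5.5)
We would prove \eqref{e:uniformity-volume-refined} by a packing argument: pick many well-separated points of $X\cap J$, apply uniformity around each of them at scale $\tilde r$, and then collect the resulting pieces of $X$ inside a single interval of length $|J|+\tilde r$.

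If $X\cap J=\emptyset$ there is nothing to prove, so assume it is nonempty. Choose a maximal set of points $x_1<\dots<x_L$ in $X\cap J$ with $|x_j-x_k|\geq\tilde r$ for $j\neq k$.

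First we need a lower bound on $L$. By maximality, every point of $X\cap J$ lies within distance less than $\tilde r$ of some $x_j$. Hence $X\cap J$ is covered by the $L$ intervals $[x_j-\tilde r,x_j+\tilde r]$, each of which is covered by two $\tilde r$-intervals. This gives $|X\cap J|_{\tilde r}\leq 2L$, which is where the factor $\tfrac12$ comes from.

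To get disjoint intervals we thin out the points. Keep every other point $x_1,x_3,\dots$, so that consecutive kept points are at distance at least $2\tilde r$ apart. There are at least $L/2$ of them, which would cost another factor of $2$. To avoid this loss we instead use half-open intervals $J_j=[x_j-\tilde r/2,x_j+\tilde r/2)$ for all $j\leq L$. These are pairwise disjoint because the points are $\tilde r$-separated, and each has length exactly $\tilde r\in[h,1]$ with center $x_j\in X$. Since Lebesgue measure ignores endpoints, Definition~\ref{d:set-uniformity} still applies to $J_j$ and gives $|X\cap J_j|\geq r_1\mathbf M_{\tilde r}(X)$.

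All the $J_j$ lie in the interval $J(\tilde r/2)$, obtained from $J$ by extending it by $\tilde r/2$ on each side, because $x_j\in J$. This interval has length $|J|+\tilde r$. Summing over the disjoint $J_j$ gives
\[
\mathbf M_{|J|+\tilde r}(X)\ \geq\ |X\cap J(\tilde r/2)|\ \geq\ \sum_{j=1}^L |X\cap J_j|\ \geq\ L\,r_1\mathbf M_{\tilde r}(X)\ \geq\ \tfrac12 r_1|X\cap J|_{\tilde r}\,\mathbf M_{\tilde r}(X).
\]

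The only step needing care is the bookkeeping of constants. We must check that the covering bound $|X\cap J|_{\tilde r}\leq 2L$ is correct, and that the separation $\tilde r$ together with the half-width $\tilde r/2$ produces both disjointness and containment in an interval of length exactly $|J|+\tilde r$. If $L$ is infinite, the same argument applied to arbitrarily large finite sub-collections of separated points shows that the left-hand side is infinite. This is impossible since $\mathbf M_{|J|+\tilde r}(X)\leq |J|+\tilde r$, so in fact $L$ is always finite.
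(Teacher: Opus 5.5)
Your proof is correct and is essentially the paper's argument: take a maximal $\tilde r$-separated set in $X\cap J$, apply uniformity on the disjoint $\tilde r$-intervals centered at these points, which lie inside $J(\tilde r/2)$, and bound $|X\cap J|_{\tilde r}\leq 2L$ using the covering of $X\cap J$ by the $2\tilde r$-intervals around the points. Your closing paragraph on $L=\infty$ is unnecessary, since a $\tilde r$-separated subset of a bounded interval has at most $|J|/\tilde r+1$ points, and the argument you give there would not work anyway when $\mathbf M_{\tilde r}(X)=0$.
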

\begin{proof}
Let $x_1,\dots,x_L\in X\cap J$ be a maximal $\tilde r$-separated set
and $J_\ell$ be the $\tilde r$-interval centered at $x_\ell$. Then
\begin{equation}
  \label{e:univol-1}
\mathbf M_{|J|+\tilde r}(X)\geq |X\cap J(\tilde r/2)|\geq \sum_{\ell=1}^L |X\cap J_\ell|\geq Lr_1\mathbf M_{\tilde r}(X).
\end{equation}
Here the first inequality follows since $J(\tilde r/2)$
is an interval of length $|J|+\tilde r$. The second inequality holds since the intervals $J_\ell$ do not overlap for
different $\ell$ and are contained
in $J(\tilde r/2)$. The last inequality follows from the uniformity of~$X$.

On the other hand, the $2\tilde r$-intervals centered at $x_1,\dots,x_L$ cover $X\cap J$, so
by~\eqref{e:r-cover-monotone} we have $|X\cap J|_{\tilde r}\leq 2|X\cap J|_{2\tilde r}\leq 2L$. Together with~\eqref{e:univol-1} this gives~\eqref{e:uniformity-volume-refined}.
\end{proof}
\begin{remark}
Taking $J:=[-N,N]$ where $N\to\infty$ in Lemma~\ref{l:uniformity-volume}, we obtain the following counterpart to~\eqref{e:volume-covering-M}:
\begin{equation}
  \label{e:uniformity-volume}
|X|\geq \tfrac12 r_1|X|_{\tilde r}\,\mathbf M_{\tilde r}(X)\quad\text{for all }\tilde r\in [h,1].
\end{equation}
An informal proof of this is as follows: we can cover $X$ by $|X|_{\tilde r}$ many $\tilde r$-intervals,
and we can adjust these to be centered at points in $X$ and have bounded overlap, so the measure of $X$
is bounded below by the number of the intervals times the measure of~$X$ in each, which by uniformity
is comparable to $\mathbf M_{\tilde r}(X)$.

The reader is encouraged to verify that the sets in Examples~(1)--(3) in~\S\ref{s:set-uniformity}
satisfy the estimate~\eqref{e:uniformity-volume}; the set in Example~(4) satisfies this estimate as well
(even though it is not uniform). Here is an example of a non-uniform set (with $r_1$ independent of~$h$) which does \textbf{not} satisfy~\eqref{e:uniformity-volume} with $\tilde r:=h^{\frac12}$. Let us split $[0,1]$ into `big' intervals of size $h^{\frac12}$,
each of which is split into `small' intervals of size~$h$. We choose one big interval and call it `special' and the other big intervals `generic'. To construct~$X$, take the special big interval and add one small interval in each generic big interval. Then $|X|_{h^{\frac12}}\sim h^{-\frac12}$,
$\mathbf M_{h^{\frac12}}(X)=h^{\frac12}$,
and their product is much larger than $|X|\sim h^{\frac12}$.
\end{remark}
%
\begin{lemma}
  \label{l:uniformity-volume-more}
Assume that $X\subset\mathbb R$ is $r_1$-uniform on scales $h$ to~1, $|X|>0$, and $h\leq \tilde r\leq r\leq 1$. Then
\begin{equation}
  \label{e:uniformity-volume-more}
{\mathbf M_r(X(\tilde r))\over |X(\tilde r)|}
\leq
{48\mathbf M_r(X)\over r_1|X|}.
\end{equation}
\end{lemma}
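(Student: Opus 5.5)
The inequality will follow from two separate estimates: an upper bound on the numerator $\mathbf M_r(X(\tilde r))$ and a lower bound on the denominator $|X(\tilde r)|$. Both are expressed through the covering number $|X|_{\tilde r}$ and the quantity $\mathbf M_{\tilde r}(X)$, and uniformity is what lets us compare them.

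\emph{Denominator.} By~\eqref{e:volume-nbhd} we have $|X(\tilde r)|\geq (\tilde r/2)|X|_{\tilde r}$. Since $\mathbf M_{\tilde r}(X)\leq \tilde r$, this gives $|X(\tilde r)|\geq \tfrac12|X|_{\tilde r}\,\mathbf M_{\tilde r}(X)$. On the other hand, \eqref{e:volume-covering-M} gives $|X|\leq |X|_{\tilde r}\mathbf M_{\tilde r}(X)$. Combining the two,
$$
|X(\tilde r)|\geq \tfrac12 |X|_{\tilde r}\tilde r\geq \tfrac12 |X|\,{\tilde r\over \mathbf M_{\tilde r}(X)}.
$$

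\emph{Numerator.} Let $I$ be an $r$-interval. Then $X(\tilde r)\cap I$ is contained in $(X\cap J)(\tilde r)$, where $J$ is the $3r$-interval with the same center as $I$; here we use $\tilde r\leq r$. Applying \eqref{e:volume-nbhd} again gives
$$
|X(\tilde r)\cap I|\leq 3\tilde r\,|X\cap J|_{\tilde r}.
$$
To bound $|X\cap J|_{\tilde r}$ we invoke Lemma~\ref{l:uniformity-volume}, which is where uniformity enters:
$$
|X\cap J|_{\tilde r}\leq {2\mathbf M_{|J|+\tilde r}(X)\over r_1\mathbf M_{\tilde r}(X)}\leq {2\mathbf M_{4r}(X)\over r_1\mathbf M_{\tilde r}(X)}\leq {8\mathbf M_r(X)\over r_1\mathbf M_{\tilde r}(X)},
$$
where the last step uses~\eqref{e:M-r-monotone}. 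Taking the supremum over $I$,
$$
\mathbf M_r(X(\tilde r))\leq {24\tilde r\,\mathbf M_r(X)\over r_1\mathbf M_{\tilde r}(X)}.
$$

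\emph{Conclusion.} Dividing the numerator bound by the denominator bound, the factors $\tilde r/\mathbf M_{\tilde r}(X)$ cancel and we get the constant $24\cdot 2=48$, which is exactly~\eqref{e:uniformity-volume-more}.

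The only delicate point is the application of Lemma~\ref{l:uniformity-volume}. Its hypothesis $\tilde r\in[h,1]$ holds by assumption, and it places no size restriction on $J$, so using a $3r$-interval with $3r$ possibly larger than $1$ is fine. We also need $\mathbf M_{\tilde r}(X)>0$ in order to divide by it. This follows from $|X|>0$: a set of positive measure meets some $\tilde r$-interval in positive measure.
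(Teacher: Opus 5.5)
Your proof is correct and is essentially the paper's argument: the same chain of \eqref{e:volume-nbhd}, Lemma~\ref{l:uniformity-volume}, \eqref{e:M-r-monotone} and \eqref{e:volume-covering-M}, with the same constant $48$. The only differences are cosmetic: you enlarge $I$ to a $3r$-interval rather than $I(\tilde r)$ (either way the scale in Lemma~\ref{l:uniformity-volume} is at most $4r$, so \eqref{e:M-r-monotone} costs a factor $4$), and your aside that $\mathbf M_{\tilde r}(X)\leq\tilde r$ in the denominator step is not needed.
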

\begin{remark}
It might be helpful to consider the following example when Lemma~\ref{l:uniformity-volume-more}
is essentially sharp. Let us split $[0,1]$ into `big' intervals of size $h^{\frac13}$,
each of which is split into `medium' intervals of size $h^{\frac23}$, each
of which is split into `small' intervals of size~$h$.
We choose one big interval and call it `special' and the other big intervals `generic'.
To construct $X$, pick one small interval inside each medium interval in the special big interval,
and add one medium interval in each generic big interval.
Put $\tilde r:=h^{\frac23}$ and $r:=h^{\frac13}$.
Then $|X|\sim h^{\frac13}$, $\mathbf M_r(X)\sim h^{\frac23}$, $|X(\tilde r)|\sim h^{\frac13}$, and $\mathbf M_r(X(\tilde r))\sim h^{\frac13}$; the set $X$ satisfies the uniformity condition with $r_1\sim h^{\frac13}$.
\end{remark}
\begin{proof}[Proof of Lemma~\ref{l:uniformity-volume-more}]
Let $J\subset\mathbb R$ be an $r$-interval; we need to show that
$$
|X(\tilde r)\cap J|\leq  {48\mathbf M_r(X)|X(\tilde r)|\over r_1 |X|}.
$$
This is proved as follows:
$$
\begin{aligned}
|X(\tilde r)\cap J|
&\leq |(X\cap J(\tilde r))(\tilde r)|
\leq 3\tilde r|X\cap J(\tilde r)|_{\tilde r}
\\
&\leq
{6\tilde r\mathbf M_{r+3\tilde r}(X)\over r_1 \mathbf M_{\tilde r}(X)}
\leq {24\tilde r\mathbf M_r(X)|X|_{\tilde r}\over r_1|X|}
\leq {48\mathbf M_r(X)|X(\tilde r)|\over r_1|X|}.
\end{aligned}
$$
Here the second inequality follows from~\eqref{e:volume-nbhd}. The third inequality
follows from~\eqref{e:uniformity-volume-refined} since $J(\tilde r)$ is an interval of length $r+2\tilde r$.
The fourth inequality follows from~\eqref{e:M-r-monotone} and~\eqref{e:volume-covering-M}.
The last inequality follows from~\eqref{e:volume-nbhd}.
\end{proof}

\subsubsection{Splitting off a uniform set}

The following lemma shows how to extract a large uniform subset of a given set.
It is proved by discretizing the interval $[0,1]$ using base~$P$ representation
with $R_1$ many digits and applying Lemma~\ref{l:single-pigeon}.
\begin{lemma}
  \label{l:uniform-set-split}
Assume that $X\subset [0,1]$ satisfies $|X|>0$ and $P\geq 2,R_1\geq 1$ are integers. Then
there exists
a subset $Y\subset X$ such that $Y$ is ${1\over P(2P+1)}$-uniform
on scales~$2P^{-R_1}$ to~1 and
\begin{equation}
  \label{e:uniform-set-split}
|Y|\geq (2R_1+6)^{-R_1}|X|.
\end{equation}
\end{lemma}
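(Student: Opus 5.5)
We discretize $[0,1]$ using base $P$ expansions with $R_1$ digits and then apply Lemma~\ref{l:single-pigeon}. Take the alphabet $\mathcal W:=\{0,\dots,P-1\}$ and identify each word $\mathbf w=w_0\dots w_{R_1-1}\in\mathcal W^{R_1}$ with the $P^{-R_1}$-interval
$$
I_{\mathbf w}:=\Big[\sum_{j=0}^{R_1-1}w_jP^{-j-1},\ \sum_{j=0}^{R_1-1}w_jP^{-j-1}+P^{-R_1}\Big].
$$
For a prefix $\mathbf w'\in\mathcal W^T$, the union of the $I_{\mathbf w}$ with $\mathbf w'\prec\mathbf w$ is a $P^{-T}$-adic interval $I_{\mathbf w'}$. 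These intervals cover $[0,1]$ and overlap only at endpoints, which have measure zero. Define the tree measure by
$$
\mathbf X:=\{\mathbf w\mid |X\cap I_{\mathbf w}|>0\},\qquad \mu(\mathbf w):=|X\cap I_{\mathbf w}|,
$$
so that $\mu(\mathbf X_{\succ\mathbf w'})=|X\cap I_{\mathbf w'}|$ and $\mu(\mathbf X)=|X|$.

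Apply Lemma~\ref{l:single-pigeon} with $R:=R_1$ and $r:=1/P$. This requires $r\le\frac12$, which holds because $P\ge2$, and gives $\log\#(\mathcal W)/\log(1/r)=1$. We get a completely $1/P$-uniform subtree $\mathbf Y$ with $\mu(\mathbf Y)\ge(6+2R_1)^{-R_1}|X|$. Set
$$
Y:=X\cap\bigcup_{\mathbf w\in\mathbf Y}I_{\mathbf w}.
$$
Then $|Y|=\mu(\mathbf Y)$, which is \eqref{e:uniform-set-split}. We can also discard the measure-zero set of points of $Y$ at which the density is not positive (Lebesgue points of density zero), without changing $|Y|$. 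This makes every point of $Y$ lie in the interior support, so that every interval centered at a point of $Y$ meets some $I_{\mathbf w}$, $\mathbf w\in\mathbf Y$, in positive measure. Since $Y\cap I_{\mathbf w'}$ equals $X\cap I_{\mathbf w'}$ restricted to the subtree, for every node $\mathbf w'\in\pi_T(\mathbf Y)$ we have
$$
|Y\cap I_{\mathbf w'}|=\mu(\mathbf Y_{\succ\mathbf w'}),
$$
and these quantities are comparable within a factor $1/P$ across nodes of the same depth.

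It remains to verify uniformity, which is the main step. Let $J$ be an interval centered at $x\in Y$ with $|J|\in[2P^{-R_1},1]$. Choose the integer $T\in[1,R_1]$ with $2P^{-T}\le|J|<2P^{-T+1}$; when $|J|$ is near $1$, take $T=1$ and absorb the resulting constants. The point $x$ lies in some depth-$T$ interval $I_{\mathbf w'}$ with $\mathbf w'\in\pi_T(\mathbf Y)$, and $I_{\mathbf w'}$ has length $P^{-T}\le|J|/2$. Since $J$ has half-length at least $P^{-T}$ and is centered at $x$, it contains $I_{\mathbf w'}$. Hence
$$
|Y\cap J|\ge\mu(\mathbf Y_{\succ\mathbf w'})\ge P^{-1}\max_{\mathbf v\in\pi_T(\mathbf Y)}\mu(\mathbf Y_{\succ\mathbf v}).
$$
In the other direction, any interval of length $|J|<2P^{1-T}$ meets at most $2P+1$ depth-$T$ intervals, so
$$
\mathbf M_{|J|}(Y)\le(2P+1)\max_{\mathbf v}\mu(\mathbf Y_{\succ\mathbf v}).
$$
Combining the two bounds gives $|Y\cap J|\ge\frac{1}{P(2P+1)}\mathbf M_{|J|}(Y)$, as required. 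The delicate points are the bookkeeping for the choice of $T$ at the extreme scales ($|J|$ close to $1$, or equal to $2P^{-R_1}$), and making sure $x$ genuinely lies in a retained node, which is exactly what the measure-zero removal above guarantees.
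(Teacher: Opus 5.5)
Your proof is correct and follows the paper's argument essentially verbatim: base-$P$ discretization, Lemma~\ref{l:single-pigeon} with $r=1/P$, a lower bound from the depth-$T$ node containing the center, and an upper bound from covering by at most $2P+1$ depth-$T$ intervals. Two of your precautions are unnecessary: every $x\in Y$ already lies in some $I_{\mathbf w}$ with $\mathbf w\in\mathbf Y$ by the definition of $Y$, so the Lebesgue-density removal is not needed, and your condition $2P^{-T}\le|J|<2P^{1-T}$ automatically gives $1\le T\le R_1$ for all $|J|\in[2P^{-R_1},1]$, so the exact constant $\frac{1}{P(2P+1)}$ comes out with nothing to absorb at the extreme scales.
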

\begin{proof}
We use the notation for trees, tree measures, and their uniformity from~\S\ref{s:uniform-trees}.

\noindent 1.
Consider the alphabet $\mathcal W:=\{0,1,\dots,P-1\}$.
To each word $\mathbf w=w_0\dots w_{T-1}\in\mathcal W^T$, $T\geq 0$,
we associate the interval
$$
J_{\mathbf w}:=\bigg(\sum_{j=0}^{T-1}w_j P^{-j-1}\bigg)+[0,P^{-T}].
$$
The intervals $(J_{\mathbf w})_{\mathbf w\in\mathcal W^T}$ form a nonoverlapping cover of~$[0,1]$,
and $J_{\mathbf w}\subset J_{\mathbf w'}$ whenever $\mathbf w'\prec\mathbf w$.

Consider the tree measure $(\mathcal W^{R_1},\mu)$ where
$$
\mu(\mathbf w):=|X\cap J_{\mathbf w}|,\quad
\mathbf w\in\mathcal W^{R_1}.
$$
By Lemma~\ref{l:single-pigeon} with $r:=P^{-1}$, there exists
a subset $\mathbf Y\subset \mathcal W^{R_1}$ such that
the tree measure $(\mathbf Y,\mu)$ is completely $P^{-1}$-uniform and
$\mu(\mathbf Y)\geq (2R_1+6)^{-R_1}\mu(\mathcal W^{R_1})$. We now put
$$
Y:=\bigcup_{\mathbf w\in \mathbf Y} (X\cap J_{\mathbf w}).
$$
Then $|Y|=\mu(\mathbf Y)$ and $|X|=\mu(\mathcal W^{R_1})$ which gives~\eqref{e:uniform-set-split}.

Let $T\in \{0,\dots,R_1\}$. Then
$$
|Y\cap J_{\mathbf w'}|=\mu(\mathbf Y_{\succ\mathbf w'})\quad\text{for all }\mathbf w'\in\pi_T(\mathbf Y).
$$
Note also that $|Y\cap J_{\mathbf w'}|=0$ for all $\mathbf w'\in\mathcal W^T\setminus \pi_T(\mathbf Y)$. Denote
$$
c_T:=\max_{\mathbf w'\in \pi_T(\mathbf Y)}\mu(\mathbf Y_{\succ\mathbf w'}).
$$
The uniformity of $(\mathbf Y,\mu)$ implies that
$$
\begin{aligned}
|Y\cap J_{\mathbf w'}|
&\leq c_T\quad\text{for all }\mathbf w'\in\mathcal W^T,
\\
|Y\cap J_{\mathbf w'}|
&\geq P^{-1}c_T\quad\text{for all }\mathbf w'\in\pi_T(\mathbf Y).
\end{aligned}
$$

\noindent 2. We now show the uniformity of the set $Y$.
Take arbitrary $r\in [2P^{-R_1},1]$.
Take the smallest integer $T$ such that $P^{-T}\leq r/2$; note that $1\leq T\leq R_1$.

Let $J$ be any interval of length~$r$. Since $|J|\leq 2P^{1-T}$,
we can cover $J\cap [0,1]$ by at most $2P+1$ intervals $J_{\mathbf w'}$
where $\mathbf w'\in \mathcal W^T$. Each of those satisfies
$|Y\cap J_{\mathbf w'}|\leq c_T$.
It follows that $|Y\cap J|\leq (2P+1)c_T$ and thus
\begin{equation}
  \label{e:ussp-upper}
\mathbf M_r(Y)\leq (2P+1)c_T.
\end{equation}
Now, assume additionally that the center $x$ of the interval $J$ lies in~$Y$.
Then $x\in J_{\mathbf w}$ for some $\mathbf w\in \mathbf Y$. Denote $\mathbf w':=\pi_T(\mathbf w)\in \pi_T(\mathbf Y)$.
Then $x$ lies in the interval $J_{\mathbf w'}$, which has length $P^{-T}\leq |J|/2$.
Since the interval $J$ is centered at $x$, we have $J_{\mathbf w'}\subset J$ and thus
$$
|Y\cap J|\geq |Y\cap J_{\mathbf w'}|\geq P^{-1}c_T.
$$
Together with~\eqref{e:ussp-upper} this shows that
$$
|Y\cap J|\geq {\mathbf M_r(Y)\over P(2P+1)}
$$
and finishes the proof of uniformity of~$Y$.
\end{proof}

\subsubsection{Decomposing into uniform sets}

We now apply Lemma~\ref{l:uniform-set-split} iteratively
to decompose a set into $\mathcal O(h^{-\varepsilon})$ many subsets which are $1/\mathcal O(h^{-\varepsilon})$-uniform on scales~$h$ to~1, for any $\varepsilon>0$:
\begin{lemma}
  \label{l:uniform-set-decomposition}
There exist functions $N_1(h)\geq 1,r_1(h)\in (0,\frac12]$ of $h\in (0,0.1]$ satisfying
\begin{equation}
  \label{e:usd-Nr}
N_1(h),r_1(h)^{-1}=\mathcal O_\varepsilon(h^{-\varepsilon})\quad\text{as }h\to 0\quad\text{for all }\varepsilon>0
\end{equation}
such that for each set $X\subset [0,1]$ with $|X|>0$ and each $h\in (0,0.1]$ we can decompose
\begin{equation}
  \label{e:usd-X}
X=\bigg(\bigsqcup_{j=1}^{N_X} X_j\bigg)\sqcup X_{\res},\quad
N_X\leq N_1(h),\quad
|X_{\res}|\leq h|X|,
\end{equation}
where each set $X_j$ is $r_1(h)$-uniform on scales~$h$ to~1.
\end{lemma}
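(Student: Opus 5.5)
The plan is to apply Lemma~\ref{l:uniform-set-split} iteratively: peel off uniform pieces one at a time until the remaining set has measure at most $h|X|$. The parameters $P$ and $R_1$ should make the uniformity constant and the number of steps sub-polynomial in $h$.

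First, the choice of parameters. For $h\in(0,0.1]$ we need $2P^{-R_1}\leq h$, so that uniformity on scales $2P^{-R_1}$ to~1 implies uniformity on scales $h$ to~1. Uniformity on a range of scales is inherited by any subrange, directly from Definition~\ref{d:set-uniformity}. We also want $P$ and $R_1$ both to be $h^{-o(1)}$. The natural choice balances them:
\[
R_1:=\big\lceil \sqrt{\log(2/h)}\,\big\rceil,\qquad P:=\big\lceil \exp\sqrt{\log(2/h)}\,\big\rceil .
\]
Then $P^{R_1}\geq 2/h$, and $P=\exp(\mathcal O(\sqrt{\log(1/h)}))=\mathcal O_\varepsilon(h^{-\varepsilon})$. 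We set $r_1(h):=\frac{1}{P(2P+1)}\leq\frac12$, which satisfies~\eqref{e:usd-Nr}.

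Next, the iteration. Put $X^{(0)}:=X$. While $|X^{(j-1)}|>h|X|$, apply Lemma~\ref{l:uniform-set-split} to $X^{(j-1)}\subset[0,1]$, which has positive measure. This gives $X_j\subset X^{(j-1)}$ that is $r_1(h)$-uniform on scales $h$ to~1 and satisfies $|X_j|\geq q|X^{(j-1)}|$ with $q:=(2R_1+6)^{-R_1}$. Set $X^{(j)}:=X^{(j-1)}\setminus X_j$. Then $|X^{(j)}|\leq(1-q)^j|X|\leq e^{-qj}|X|$. So the process stops, and $X_{\res}:=X^{(N_X)}$ has measure at most $h|X|$, once $N_X\geq q^{-1}\log(1/h)$. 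Accordingly we define
\[
N_1(h):=\big\lceil q^{-1}\log(1/h)\big\rceil+1 .
\]
The pieces $X_j$ are disjoint by construction, which gives~\eqref{e:usd-X}.

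The main point to check is that $N_1(h)=\mathcal O_\varepsilon(h^{-\varepsilon})$. This holds because
\[
q^{-1}=(2R_1+6)^{R_1}=\exp\big(R_1\log(2R_1+6)\big)=\exp\big(\mathcal O(\sqrt{\log(1/h)}\,\log\log(1/h))\big),
\]
which is $h^{-o(1)}$. The square-root balancing is exactly what makes both $P$ and $q^{-1}$ sub-polynomial. If instead $R_1$ were taken to be of order $\log(1/h)$ with $P$ fixed, then $q^{-1}$ would be $h^{-C\log\log(1/h)}$, which is too large.

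A minor remaining point concerns measure-zero degeneracies: the iteration requires $|X^{(j-1)}|>0$, which holds because it is applied only while $|X^{(j-1)}|>h|X|>0$. This completes the plan.
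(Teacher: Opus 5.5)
Your proposal is correct and takes essentially the same route as the paper. You iterate Lemma~\ref{l:uniform-set-split} with $R_1\approx\sqrt{\log(1/h)}$ and $P\approx\exp\sqrt{\log(1/h)}$, peel off uniform pieces until the remainder has measure at most $h|X|$, and bound the number of steps by $(2R_1+6)^{R_1}\log(1/h)=\mathcal O_\varepsilon(h^{-\varepsilon})$. The only difference is that the paper takes $R_1=\lfloor\sqrt{\log(1/h)}\rfloor$ and $P$ minimal with $2P^{-R_1}\le h$, while you choose $P$ explicitly; both choices give the same orders of magnitude.
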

\begin{proof}
Put
$$
R_1(h):=\big\lfloor\sqrt{\log(1/h)}\big\rfloor
$$
and let $P(h)\geq 2$ be the smallest positive integer such that
$$
2P(h)^{-R_1(h)}\leq h.
$$
Define
$$
r_1(h):={1\over P(h)(2P(h)+1)}.
$$
Since $R_1(h)\to \infty$ as $h\to 0$, we have $r_1(h)^{-1}=\mathcal O_\varepsilon(h^{-\varepsilon})$
for all $\varepsilon>0$.

We construct the sets $(X_j)_{1\leq j\leq N_X}$ and $(Z_j)_{0\leq j\leq N_X}$ iteratively
as follows:
\begin{itemize}
\item We start with $Z_0:=X$ and then consider $j=1,2,\dots$
\item If $|Z_{j-1}|\leq h|X|$, then we stop the process and set $N_X:=j-1$.
\item Otherwise we use Lemma~\ref{l:uniform-set-split} to construct
a subset $X_j\subset Z_{j-1}$ such that $X_j$ is $r_1(h)$-uniform on scales~$h$ to~1 and
\begin{equation}
  \label{e:usd-int-1}
|X_j|\geq (2R_1(h)+6)^{-R_1(h)}|Z_{j-1}|.
\end{equation}
We then put $Z_j:=Z_{j-1}\setminus X_j$.
\end{itemize}
Iterating~\eqref{e:usd-int-1} we see that
$$
|Z_j|\leq \big(1-(2R_1(h)+6)^{-R_1(h)}\big)^j |X|.
$$
Thus the process above terminates and we have $N_X\leq N_1(h)$, where $N_1(h)$ is the smallest integer such that
$$
\big(1-(2R_1(h)+6)^{-R_1(h)}\big)^{N_1(h)}\leq h.
$$
We have
$$
\begin{aligned}
N_1(h)&=\Big\lceil -{\log(1/h)\over \log(1-(2R_1(h)+6)^{-R_1(h)})}\Big\rceil
\\
&=\mathcal O\big((3R_1(h))^{R_1(h)}\log(1/h)\big)
\\
&=\mathcal O(e^{\sqrt{\log(1/h)}\log\log(1/h)})=\mathcal O_{\varepsilon}(h^{-\varepsilon}).
\end{aligned}
$$
Finally, from the construction above we have the partition property~\eqref{e:usd-X}, with the residual set
$X_{\res}=Z_{N_X}$ satisfying $|X_{\res}|\leq h|X|$.
\end{proof}

\section{Nonlinear uncertainty principle}

In this section, we prove Theorem~\ref{t:NUP}. The proof uses the nonlinear projection theorem of Shmerkin~\cite{Shmerkin-nonlinear}, which in turn relies on Bourgain's projection theorem~\cite{Bourgain-Sum-Product}.

\subsection{Basic properties}

We first give
background for uncertainty principles and discuss in detail the assumptions that we make.
Throughout this section we use the spaces $L^p=L^p(\mathbb R^d)$ with respect to the Lebesgue measure.

\subsubsection{Basics of uncertainty principles}
\label{s:uncertainty-basics}

We first give some basic upper bounds on the uncertainty norm~\eqref{e:B-NUP-norm}. We have the \emph{unitarity bound}
\begin{equation}
  \label{e:B-NUP-L2}
\|\indic_X\mathcal B\indic_Y\|_{L^2\to L^2}\leq \|\mathcal B\|_{L^2\to L^2}\leq C
\end{equation}
where the constant $C$ depends only on $\mathcal U_{\mathcal B},\mathcal K_{\mathcal B}$,
some $C^\infty$-seminorms of $\Theta,\chi_{\mathcal B}$, and the upper bound
on $|\partial_x\partial_y\Theta|^{-1}$. See Remark~\ref{l:L2-intermediate-remark} below for the proof.

We also have
\begin{equation}
  \label{e:L1-Linfty}
\|\mathcal B\|_{L^1\to L^\infty}\leq Ch^{-\frac12},\quad
C:=(2\pi)^{-\frac12}\sup|\chi_{\mathcal B}|.
\end{equation}
Together with H\"older's inequality this implies the \emph{volume bound}
\begin{equation}
  \label{e:uncertainty-volume-2}
\|\indic_X\mathcal B\indic_Y\|_{L^2\to L^2}\leq C h^{-\frac12}\sqrt{|X|\cdot |Y|}.
\end{equation}
We also give some lower bounds on the norm~\eqref{e:B-NUP-norm}. We use the following elementary
\begin{lemma}
  \label{l:NUP-lower}
Assume that $X,Y\subset\mathbb R$, $X\times Y\subset\mathcal U_{\mathcal B}$,
and the functions $\Theta,\chi_{\mathcal B}$ satisfy for all $x\in X,y\in Y$
\begin{align}
\label{e:NUP-lower-ass-1}
\Theta(x,y) & \in 2\pi h(\mathbb Z+[-\tfrac16,\tfrac16]),
\\
\label{e:NUP-lower-ass-2}
\chi_{\mathcal B}(x,y) & =1.
\end{align}
Then
\begin{equation}
  \label{e:NUP-lower}
\|\indic_X\mathcal B\indic_Y\|_{L^2\to L^2}\geq (8\pi h)^{-\frac12}\sqrt{|X|\cdot |Y|}.
\end{equation}
\end{lemma}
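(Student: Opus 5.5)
The plan is to test the operator against the indicator function $f:=\indic_Y$ and compute $\mathcal B\indic_Y$ pointwise on~$X$. Since $\|\indic_X\mathcal B\indic_Y\|_{L^2\to L^2}\geq \|\indic_X\mathcal B\indic_Y f\|_{L^2}/\|f\|_{L^2}$ and $\|f\|_{L^2}=|Y|^{1/2}$, it suffices to show that $|\mathcal B\indic_Y(x)|\geq (8\pi h)^{-\frac12}|Y|$ for every $x\in X$. Integrating the square of this bound over~$X$ then gives
$$
\|\indic_X\mathcal B\indic_Y\|_{L^2}\geq (8\pi h)^{-\frac12}|Y|\,|X|^{\frac12},
$$
and dividing by $|Y|^{1/2}$ yields~\eqref{e:NUP-lower}. (If $|Y|=0$ there is nothing to prove.)

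For the pointwise bound, fix $x\in X$. By~\eqref{e:NUP-lower-ass-2},
$$
\mathcal B\indic_Y(x)=(2\pi h)^{-\frac12}\int_Y e^{\frac ih\Theta(x,y)}\,dy.
$$
By~\eqref{e:NUP-lower-ass-1}, for each $y\in Y$ we have $\frac1h\Theta(x,y)\in 2\pi\mathbb Z+[-\frac\pi3,\frac\pi3]$. Hence
$$
\Re e^{\frac ih\Theta(x,y)}=\cos\big(\tfrac1h\Theta(x,y)\big)\geq \cos\tfrac\pi3=\tfrac12 .
$$
Taking real parts gives
$$
|\mathcal B\indic_Y(x)|\geq (2\pi h)^{-\frac12}\cdot\tfrac12|Y|=(8\pi h)^{-\frac12}|Y|,
$$
as needed.

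There is no real obstacle here. The only points to watch are that the hypothesis $X\times Y\subset\mathcal U_{\mathcal B}$ makes $\Theta(x,y)$ well defined on the relevant set, and that measurability and finiteness of $|Y|$ are implicitly assumed, so that $\indic_Y\in L^2$. If $|Y|=\infty$, one would instead apply the argument to bounded subsets of $Y$ and let them increase to~$Y$.
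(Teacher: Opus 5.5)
Your proposal is correct and follows essentially the same route as the paper's proof: take $f=\indic_Y$, use $\Re\big(e^{\frac ih\Theta(x,y)}\chi_{\mathcal B}(x,y)\big)\geq\frac12$ on $X\times Y$ to get $|\mathcal B\indic_Y(x)|\geq(8\pi h)^{-\frac12}|Y|$ for $x\in X$, and then divide by $\|f\|_{L^2}=\sqrt{|Y|}$. Your caveat about $|Y|=\infty$ is unnecessary: if $X\neq\emptyset$, then $\chi_{\mathcal B}=1$ on $X\times Y$ forces $Y$ to lie in a slice of the compact set $\mathcal K_{\mathcal B}$, so $Y$ is bounded.
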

\begin{proof}
From~\eqref{e:NUP-lower-ass-1} and~\eqref{e:NUP-lower-ass-2} we see that
$$
\Re\big (e^{{i\over h}\Theta(x,y)}\chi_{\mathcal B}(x,y)\big)\geq \tfrac12
\quad\text{for all }x\in X,\ y\in Y.
$$
Let $f:=\mathbf 1_Y$ be the indicator function of~$Y$. Then for all $x\in X$,
we have $\Re\mathcal Bf(x)\geq (8\pi h)^{-\frac12} |Y|$ and thus
$$
\|\indic_X\mathcal B\indic_Y f\|_{L^2}\geq (8\pi h)^{-\frac12}\sqrt{|X|}\cdot|Y|,\quad
\|f\|_{L^2}=\sqrt{|Y|}
$$
which gives~\eqref{e:NUP-lower}.
\end{proof}
An application of Lemma~\ref{l:NUP-lower} is the following basic counterexample
to uncertainty principle which we call the \emph{brick}. Fix $\alpha\in (0,1)$, assume that $\chi_{\mathcal B}=1$ near $(0,0)$,
$\Theta(x,0)=\Theta(0,y)=0$ for all $x,y$, $\partial_x\partial_y\Theta(0,0)=1$, and put
\begin{equation}
  \label{e:brick-ex-1}
X:=[0,0.1 h^\alpha],\quad
Y:=[0,0.1 h^{1-\alpha}].
\end{equation}
Then we have for all $x\in X$, $y\in Y$ and small enough $h$
$$
\Theta(x,y)=\int_0^x\int_0^y\partial_x\partial_y\Theta(x',y')\,dy'dx'\in [0,2xy]\ \subset\ [0, 0.02h].
$$
Thus Lemma~\ref{l:NUP-lower} applies and gives
\begin{equation}
  \label{e:brick-ex-2}
\|\indic_X\mathcal B_h\indic_Y\|_{L^2\to L^2}\geq 0.01.
\end{equation}
To make this example apply to general $\Theta$, we note that replacing
$\Theta(x,y)$ by $\Theta(x,y)+a_1(x)+a_2(y)$ for any smooth functions $a_1,a_2$ does not change
the norm~\eqref{e:B-NUP-norm} since this changes the operator $\mathcal B$
to $e^{{i\over h}a_1}\mathcal Be^{{i\over h}a_2}$. Thus we can reduce to the case
when $\Theta(x,0)=\Theta(0,y)=0$ for all $x,y$. The restriction $\partial_x\partial_y\Theta(0,0)=1$
is not essential either as can be seen by examining the argument above.

\subsubsection{Examples}
\label{s:NUP-examples}

We next discuss whether the assumptions of Theorem~\ref{t:NUP} apply to the examples~(1)--(3) from~\S\ref{s:set-uniformity}, where we put $\mathfrak m_Y:=|Y|$:
\begin{enumerate}
\item Assume that $Y=[0,h^\alpha]$ is an interval with $0\leq \alpha\leq 1$. Then the assumption~\eqref{e:NUP-XY-1} is not satisfied. This makes sense in light of the brick counterexample~\eqref{e:brick-ex-1}.
\item Assume that $Y=[0,1]\cap (h^\alpha\mathbb Z+[0,h])$, where $0\leq\alpha\leq 1$. If $\alpha\in \{0\}\sqcup [\tfrac12,1]$, then the assumption~\eqref{e:NUP-XY-1} is not satisfied. If $\alpha\in (0,\tfrac12)$, then
the assumptions~\eqref{e:NUP-XY-1}--\eqref{e:NUP-XY-3} are satisfied as long as
$\zeta\leq \min(\alpha,\tfrac12-\alpha)$ and $\epsilon\leq\tfrac{1}{33}\min(\alpha,\beta)$.
\item Assume that $Y\subset [0,1]$ is a $\delta$-dimensional Ahlfors--David regular set
on scales~$h$ to~1, where $0<\delta<1$. Then 
the assumptions~\eqref{e:NUP-XY-1}--\eqref{e:NUP-XY-3} hold as long as $\zeta\leq \tfrac12\delta(1-\delta)$
and $\epsilon\leq\tfrac{1}{33}\delta\beta$.

In the case when $X,Y\subset[0,1]$ are both $\delta$-dimensional Ahlfors--David regular sets on scales~$h$ to~1, the condition~\eqref{e:NUP-XY-vol} holds when $\delta\leq \tfrac12+\tfrac12\epsilon$ and the bound~\eqref{e:NUP} is better than the volume bound~\eqref{e:uncertainty-volume-2}
when $\delta> \tfrac12-\epsilon$, so Theorem~\ref{t:NUP} is only interesting when $\delta\approx\tfrac12$.
In this case a bound of the type~\eqref{e:NUP}
follows from the fractal uncertainty principles of Dyatlov--Zahl~\cite{hgap}, Bourgain--Dyatlov~\cite{fullgap},
and Dyatlov--Jin~\cite{regfup}. All of these have $\epsilon$ which depends on the regularity constant $C_R$,
while the value of~$\epsilon$ in~\eqref{e:NUP} is a universal constant. However, the work of Bourgain--Dyatlov~\cite{hyperfup} shows~\eqref{e:NUP} with a universal $\epsilon$ when $X,Y$ are limit sets of Schottky groups, which is the case in the applications of fractal uncertainty principle to spectral gaps.

Our recent paper~\cite{fup12} shows an FUP with $\epsilon={1\over 64}$ for $\frac12$-dimensional
Ahlfors--David regular sets. We refer to this paper for a more detailed review of previously known
fractal uncertainty principles.
\end{enumerate}

\subsubsection{Discussion of the nonlinear phase assumption}
\label{s:NUP-discussion}

The nonlinear phase condition~\eqref{e:NUP-curved} in Theorem~\ref{t:NUP} is crucial.
To illustrate this, consider the following counterexample inspired by the Poisson summation formula
(the Fourier transform of the delta-measure on a sublattice of~$\mathbb R$ is the delta-measure on the dual sublattice).
Let
\begin{equation}
  \label{e:bad-phase}
\mathcal U_{\mathcal B}:=\mathbb R^2,\quad 
\Theta(x,y)=-xy,\quad
\chi_{\mathcal B}=1\quad\text{on }[0,1]^2.
\end{equation}
Fix $\alpha\in (0,\tfrac 12)$ and define the sets $X,Y$ as $h$-neighborhoods of arithmetic progressions:
\begin{equation}
  \label{e:bad-XY}
X:=[0,1]\cap(2\pi h^{1-\alpha}\mathbb Z+[0,0.1 h]),\quad
Y:=[0,1]\cap(h^{\alpha}\mathbb Z+[0,0.1 h]).
\end{equation}
These sets satisfy the assumptions~\eqref{e:NUP-XY-1}--\eqref{e:NUP-XY-vol};
see Example~(2) in~\S\ref{s:NUP-examples}.
However, they do not satisfy the uncertainty principle~\eqref{e:NUP}. Indeed, for each $x\in X,y\in Y$ we have
$x\in 2\pi h^{1-\alpha}j+[0,0.1h]$, $y\in h^{\alpha}k+[0,0.1h]$ for some $j,k\in\mathbb Z$ and thus
$$
xy\ \in\ 2\pi hjk+[0,0.2h]\ \subset\ 2\pi h(\mathbb Z+[-\tfrac16,\tfrac16]),
$$
so by Lemma~\ref{l:NUP-lower} we get a lower bound contradicting~\eqref{e:NUP}:
$$
\|\indic_X\mathcal B_h\indic_Y\|_{L^2\to L^2}\geq 0.001.
$$
We now discuss the condition~\eqref{e:NUP-curved} more generally.
We start with the situation when it fails everywhere.
 For simplicity, assume that $\mathcal U_{\mathcal B}=(0,1)^2$.
Let $\Theta$ be a function satisfying~\eqref{e:NUP-nondegenerate} but such that
$$
\partial_x\partial_y\log|\partial_x\partial_y\Theta(x,y)|=0\quad\text{for all }(x,y)\in (0,1)^2.
$$
Then $\Theta$ has the following form:
\begin{equation}
  \label{e:NUP-totally-linear}
\Theta(x,y)=-\varphi_1(x)\varphi_2(y)+a_1(x)+a_2(y)
\end{equation}
for some smooth functions $\varphi_1,\varphi_2,a_1,a_2$ on $(0,1)$ such that $\varphi'_1,\varphi'_2\neq 0$.
The estimate~\eqref{e:NUP} with the phase function~$\Theta$ of the form~\eqref{e:NUP-totally-linear} is equivalent to the same estimate
with the phase function~$\Theta(x,y)=-\varphi_1(x)\varphi_2(y)$, as explained at the end of~\S\ref{s:uncertainty-basics}; by a change of variables we see that it is also equivalent to the same estimate
with the phase function $\Theta(x,y)=-xy$ and the sets $\varphi_1(X),\varphi_2(Y)$.
Thus the counterexample~\eqref{e:bad-XY} applies to any phase function of the form~\eqref{e:NUP-totally-linear};
here we use that the assumptions~\eqref{e:NUP-XY-1}--\eqref{e:NUP-XY-vol}
are diffeomorphism invariant up to changing the constant~$C_0$.

We finally provide a geometric interpretation of the condition~\eqref{e:NUP-curved}.
Let $\varkappa_\Theta:\mathcal U_1\to \mathcal U_2$ be the symplectomorphism generated by~$\Theta$ following~\eqref{e:kappa-Theta-NUP}, where $\mathcal U_1,\mathcal U_2\subset T^*\mathbb R$ are open sets.
Define the \emph{vertical/horizontal foliations} $E_V,E_H$ on $\mathcal U_2$
as follows: the leaves of $E_V$ are the lines $\{x=\const\}$ and the leaves of $E_H$
are the images of the lines $\{y=\const\}$ under $\varkappa_\Theta$.

Let $E_V(x,\xi),E_H(x,\xi)$ be the tangent spaces to the leaves of $E_V,E_H$ through $(x,\xi)$.
That is,
$$
E_V(x,\xi)=\mathbb R\partial_\xi,\quad
E_H(x,\partial_x\Theta(x,y))=\mathbb R(\partial_x+\partial_x^2\Theta(x,y)\partial_\xi).
$$
Note that $E_V(x,\xi)$ and $E_H(x,\xi)$ are transverse to each other, that is $T\mathcal U_2=E_V\oplus E_H$.

Using the foliations $E_V,E_H$ and the standard symplectic form $\omega=d\xi\wedge dx$,
define the following Lorentzian metric $g_{\fol}$ whose null directions are given by $E_V,E_H$:
for all $v,w\in T_{(x,\xi)}\mathcal U_2$,
\begin{equation}
  \label{e:g-fol-def}
\begin{gathered}
g_{\fol}(x,\xi)(v,w):=\omega(v_H,w_V)+\omega(w_H,v_V)\quad\text{where}
\\
v=v_V+v_H,\
w=w_V+w_H,\
v_V,w_V\in E_V(x,\xi),\
v_H,w_H\in E_H(x,\xi).
\end{gathered}
\end{equation}
Let us use the coordinates $(x,y)$ on $\mathcal U_2$, where $\xi=\partial_x\Theta(x,y)$. Then
$$
E_V=\mathbb R\partial_y,\quad
E_H=\mathbb R\partial_x,\quad
\omega=\partial_x\partial_y\Theta(x,y)\,dy\wedge dx,
$$
so we have
$$
g_{\fol}=-2\partial_x\partial_y\Theta(x,y)\,dxdy.
$$
The Gauss curvature of this metric is given by
\begin{equation}
  \label{e:g-fol-K}
K_{g_{\fol}}(x,y)={\partial_x\partial_y\log|\partial_x\partial_y\Theta(x,y)|\over \partial_x\partial_y\Theta(x,y)}.
\end{equation}
Indeed, if we denote by $\nabla$ the Levi-Civita connection of $g_{\fol}$, then
by the Koszul formula
$$
\nabla_{\partial_x}\partial_y=\nabla_{\partial_y}\partial_x=0,\quad
\nabla_{\partial_x}\partial_x=\partial_x\log|\partial_x\partial_y\Theta|\, \partial_x,\quad
\nabla_{\partial_y}\partial_y=\partial_y\log|\partial_x\partial_y\Theta|\, \partial_y
$$
which gives the Riemann curvature tensor
$$
R(\partial_x,\partial_y)\partial_y=\partial_x\partial_y\log|\partial_x\partial_y\Theta|\,\partial_y
$$
and implies~\eqref{e:g-fol-K}.

We see now that the condition~\eqref{e:NUP-curved} is equivalent to the metric $g_{\fol}$ having nonvanishing curvature. Note that the metric $g_{\fol}$ was defined using only the foliations $E_V,E_H$ and the symplectic form~$\omega$.
This definition in fact appears in the theory of \emph{bi-Lagrangian}, or \emph{para-K\"ahler}, structures,
see for example~\cite{Etayo-bilagrangian}.

\subsection{Reducing to an additive energy bound}
\label{s:NUP-reductions}

In this section we reduce Theorem~\ref{t:NUP} to an additive energy estimate,
Proposition~\ref{l:NUP-additive} below.

\subsubsection{Basic reductions}
\label{s:NUP-basic-reductions}

To simplify the notation below, we will show Theorem~\ref{t:NUP} in the following special case:
\begin{gather}
  \label{e:NUPred}
\Theta\in C^\infty([0,1]^2;\mathbb R),\quad
\chi_{\mathcal B}\in \CIc((0.1,0.9)^2);
\\
\label{e:Theta-mix-12}
1\leq \partial_x\partial_y\Theta(x,y)\leq 2;
\\
  \label{e:Theta-c-nlin}
|\partial_x\partial_y\log\partial_x\partial_y\Theta(x,y)|\geq c_{\nlin}>0.
\end{gather}
The constants $C$ below will depend on some $(\zeta,\epsilon)$-dependent $C^\infty$-seminorms
of $\Theta,\chi_{\mathcal B}$ and on the constants $C_0,c_{\nlin}$.
We will also assume that $h$ is small enough depending on the same data. Below $\Theta$
will always refer to the same phase function, but $X,Y$ can refer to various sets
(not necessarily the ones to which Theorem~\ref{t:NUP} applies).

The general case reduces to this special case as follows. For each $(x,y)\in \mathcal K_{\mathcal B}$,
there exists a cube $\mathcal U_{(x,y)}\subset\mathcal U_{\mathcal B}$ centered at $(x,y)$ and such that
$c\leq |\partial_x\partial_y\Theta|\leq 2c$ on $\mathcal U_{(x,y)}$ for some $c>0$.
Let $0.8\,\mathcal U_{(x,y)}$ be the cube centered at $(x,y)$
whose side length is $0.8$ times that of $\mathcal U_{(x,y)}$.
Taking a partition of unity, we reduce to the case when $\supp\chi_{\mathcal B}$ lies
in the interior of $0.8\,\mathcal U_{(x,y)}$ for some $(x,y)$.
Making an affine change of variables in $x,y$, we may assume that
$\mathcal U_{(x,y)}=[0,1]^2$ and $\supp\chi_{\mathcal B}\subset (0.1,0.9)^2$. Making the additional change of variables $x\mapsto 1-x$
if necesssary, we may also assume that $\partial_x\partial_y\Theta>0$ on $[0,1]^2$,
and then $c\leq \partial_x\partial_y\Theta\leq 2c$ for some $c>0$. We can
finally make the change $\Theta\mapsto c^{-1}\Theta$, $h\mapsto c^{-1}h$
to ensure that $1\leq\partial_x\partial_y\Theta\leq 2$.
The resulting changes in $X,Y,h$ preserve the assumptions~\eqref{e:NUP-XY-1}--\eqref{e:NUP-XY-vol}
up to a change in the constant $C_0$, as follows from~\eqref{e:r-cover-monotone}, \eqref{e:M-r-monotone},
and~\eqref{e:volume-nbhd}.

We may also replace the sets $X,Y$ in Theorem~\ref{t:NUP} by their intersections with the interval $[0.1,0.9]$, since the statement~\eqref{e:NUP}
does not change because of the support condition~\eqref{e:NUPred}. So we may assume that
\begin{equation}
  \label{e:XY-contained}
X,Y\ \subset\ [0.1,0.9].
\end{equation}

\subsubsection{The additive energy bound}

To state the additive energy bound to which we will reduce Theorem~\ref{t:NUP},
we need to introduce some notation.
Denote
\begin{equation}
  \label{e:vec-y}
\vec y=(y_1,y_2,y'_1,y'_2)\in \mathbb R^4,\quad
d\vec y:=dy_1dy_2dy'_1dy'_2.
\end{equation}
Define the following function (where $x\in [0,1]$, $\vec y\in [0,1]^4$)
\begin{equation}
  \label{e:Theta-vec-def}
\vec\Theta(x,\vec y):=\Theta(x,y_1)+\Theta(x,y_2)
-\Theta(x,y'_1)-\Theta(x,y'_2).
\end{equation}
Next, for $r>0$ define the set
\begin{equation}
  \label{e:E-r-def}
\mathcal E_r:=\{(x,\vec y)\in [0,1]^5\colon
|\partial_x\vec\Theta(x,\vec y)|\leq r\}.
\end{equation}
For sets $X,Y\subset\mathbb R$, define the additive energy quantity
\begin{equation}
  \label{e:E-r-2-def}
\mathbf E_r(X\xrightarrow\Theta Y)
:=
{\big|(X\times Y^4)\cap \mathcal E_r\big|\over |X|\cdot|Y|^3 \mathbf M_r(Y)}.
\end{equation}
By~\eqref{e:Theta-mix-12}, for each $(x,y_1,y_2,y'_1)\in [0,1]^4$, the set
of $y'_2\in [0,1]$ such that $(x,\vec y)\in\mathcal E_r$ is contained in a $2r$-interval.
It follows from~\eqref{e:M-r-monotone} that
\begin{equation}
  \label{e:E-r-basic}
\mathbf E_r(X\xrightarrow\Theta Y)\leq 2.
\end{equation}
We can now state our additive energy bound. It is proved in~\S\ref{s:NUP-additive-proof} below.
\begin{proposition}[Additive energy bound]
\label{l:NUP-additive}
For each $\zeta\in (0,0.1]$ there exists $\beta\in (0, \zeta/2]$ such that for all $\epsilon\in (0,0.1\beta]$,
any phase function $\Theta$ satisfying~\eqref{e:NUPred}--\eqref{e:Theta-c-nlin},
and any~$h>0$ small enough depending only on $\zeta,\epsilon,c_{\nlin}$, and the $C^3$ norm
of $\Theta$, the following holds.
Assume that $X,Y\subset [0.1,0.9]$ are two sets of positive measure satisfying the following
conditions:
\begin{align}
  \label{e:NUPa-Y-uni}
|Y|_{h^{\frac12}}\mathbf M_{h^{\frac12}}(Y) &\leq h^{-\epsilon/2}|Y|,
\\
  \label{e:NUPa-Y-noncon1}
\mathbf M_{|Y(h^{\frac12})|}(Y(h^{\frac12})) &\leq
h^{\zeta/2}|Y(h^{\frac12})|,
\\
  \label{e:NUPa-Y-noncon2}
\mathbf M_{h^{\beta}}(Y) &\leq h^{27\epsilon}|Y|,
\\
  \label{e:NUPa-X-noncon}
\mathbf M_r(X(h^{\frac12})) &\leq h^{-2\beta}r^\zeta |X(h^{\frac12})|\quad\text{for all }r\in [h^{\frac12},1].
\end{align}
Then we have the following estimate:
\begin{equation}
  \label{e:NUP-additive-conc}
\mathbf E_{h^{\frac12-\epsilon}}(X(h^{\frac12})\xrightarrow\Theta Y)\leq h^{26\epsilon}.
\end{equation}
\end{proposition}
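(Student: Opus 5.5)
The plan is to argue by contradiction at the single scale $r:=h^{\frac12-\epsilon}$, reducing~\eqref{e:NUP-additive-conc} to a sumset expansion statement for a one-parameter family of nonlinear maps, which is then supplied by Shmerkin's nonlinear projection theorem, Theorem~\ref{t:shmerkin-projection}.

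For each $x$ write $\varphi_x(y):=\partial_x\Theta(x,y)$. By~\eqref{e:Theta-mix-12}, $\varphi_x$ is a diffeomorphism with $1\leq\varphi_x'\leq 2$. The fiber of $\mathcal E_r$ over $x$ is then the set of $\vec y$ with $|\varphi_x(y_1)+\varphi_x(y_2)-\varphi_x(y'_1)-\varphi_x(y'_2)|\leq r$. So, up to the normalization $|Y|^3\mathbf M_r(Y)$, the quantity~\eqref{e:E-r-2-def} is an $x$-average, over $X(h^{\frac12})$, of the $r$-discretized additive energy of $\varphi_x(Y)$. I would also introduce the maps
$$
F_x(y_1,y_2):=\varphi_x(y_1)+\varphi_x(y_2).
$$

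The first step is to assume that~\eqref{e:NUP-additive-conc} fails. Then there is a set $X'\subset X(h^{\frac12})$ with $|X'|\geq h^{C\epsilon}|X(h^{\frac12})|$ such that, for each $x\in X'$, the energy of $\varphi_x(Y)$ at scale $r$ is at least $h^{C\epsilon}$ times the trivial bound. Before this I would apply Lemma~\ref{l:uniform-set-decomposition} to $Y$, which loses only $h^{-o(1)}$ factors, so that $Y$ may be taken uniform on scales $h^{\frac12}$ to~$1$. This makes~\eqref{e:NUPa-Y-uni} and the non-concentration bounds usable at every scale.

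Next I would apply the Balog--Szemer\'edi--Gowers theorem in its discretized form. For each $x\in X'$ it produces $Y_x\subset Y$ with $|Y_x|\geq h^{C\epsilon}|Y|$ and
$$
|\varphi_x(Y_x)+\varphi_x(Y_x)|_r\leq h^{-C\epsilon}|Y_x|_r .
$$
Equivalently, the image under $F_x$ of the product $Y_x\times Y_x$ barely expands at scale~$r$. A pigeonholing over $x$, with polynomial losses in $h^{\epsilon}$, should let me work with a single large subset of $Y\times Y$ for a positive proportion of $x\in X'$. I would also remove the near-diagonal part $\{|y_1-y_2|\leq h^{\beta}\}$: by~\eqref{e:NUPa-Y-noncon2} it carries at most an $h^{27\epsilon}$ fraction of the mass of $Y\times Y$.

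The second step is to verify the hypotheses of Theorem~\ref{t:shmerkin-projection} for the family $(F_x)$. The direction of $\nabla F_x(y_1,y_2)$ is governed by the slope
$$
\frac{\partial_x\partial_y\Theta(x,y_2)}{\partial_x\partial_y\Theta(x,y_1)} .
$$
Its logarithmic $x$-derivative is $\partial_x\log\partial_x\partial_y\Theta(x,y_2)-\partial_x\log\partial_x\partial_y\Theta(x,y_1)$. By the mean value theorem and~\eqref{e:Theta-c-nlin}, this has absolute value at least $c_{\nlin}|y_1-y_2|\geq c_{\nlin}h^{\beta}$ off the diagonal. This is exactly the transversality condition, with only an $h^{O(\beta)}$ loss, that Shmerkin's theorem requires. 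The remaining inputs are matched as follows.
\begin{itemize}
\item The measure on $Y\times Y$ must be Frostman or non-concentrated at scales $\geq r$ with a dimension that is not full. This comes from~\eqref{e:NUPa-Y-noncon1} together with uniformity, and it rules out $Y(h^{\frac12})$ behaving like an interval.
\item The parameter set must be non-concentrated. This comes from~\eqref{e:NUPa-X-noncon}, which gives the Frostman exponent $\zeta$ for $X(h^{\frac12})$ up to an $h^{-2\beta}$ loss.
\end{itemize}
The conclusion of Theorem~\ref{t:shmerkin-projection} is then an $\eta=\eta(\zeta)>0$ such that, for all $x$ outside an exceptional set of measure $\leq h^{\eta}|X(h^{\frac12})|$, every subset of $Y\times Y$ of relative mass $\geq h^{\eta}$ has $r$-covering number of its $F_x$-image at least $h^{-\eta}|Y|_r$.

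Finally, I would choose $\beta$ small in terms of $\eta$, so that the $h^{O(\beta)}$ transversality loss and the $h^{-2\beta}$ Frostman loss do not destroy the gain. Since $\epsilon\leq 0.1\beta$, all the $h^{C\epsilon}$ losses from Balog--Szemer\'edi--Gowers and from pigeonholing are also beaten by $h^{\eta}$. This contradicts the small doubling obtained in the first step and proves~\eqref{e:NUP-additive-conc}.

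The main obstacle I expect is the bookkeeping in this matching. Shmerkin's theorem is stated for measures with multi-scale branching and non-concentration hypotheses, whereas here we only control $Y$ at a few scales. The uniformity decomposition is what bridges this gap, but the near-diagonal degeneracy of the transversality and the exponents $27\epsilon$ and $26\epsilon$ must be threaded consistently through all the losses.
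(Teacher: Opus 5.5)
Your framework matches the paper's: contradiction, the projection $G(x,\tilde y)=\partial_x\Theta(x,y_1)+\partial_x\Theta(x,y_2)$, transversality off the diagonal from \eqref{e:Theta-c-nlin} via the mean value theorem (Lemma~\ref{l:transversality-basic}), and Frostman inputs from \eqref{e:NUPa-Y-noncon1} and \eqref{e:NUPa-X-noncon}. The gap is in the step from large energy to small projection, specifically at the diagonal.

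You note that $\{|y_1-y_2|\le h^\beta\}$ carries at most an $h^{27\epsilon}$ fraction of $Y\times Y$. But the set you feed into Theorem~\ref{t:shmerkin-projection} is $Y_x\times Y_x$ minus the diagonal. For that set, \eqref{e:NUPa-Y-noncon2} only bounds the near-diagonal part by $2|Y_x|\,\mathbf M_{h^\beta}(Y)\le 2h^{27\epsilon}|Y|\,|Y_x|$. This is negligible against $|Y_x|^2$ only when $|Y_x|\gg h^{27\epsilon}|Y|$. If $|Y_x|\le h^{27\epsilon}|Y|$, nothing prevents $Y_x$ from lying in a single $h^\beta$-interval, where transversality gives nothing. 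The energy lower bound is only $h^{26\epsilon}$. So your chain of pigeonholing in $x$, uniformization and discretized Balog--Szemer\'edi--Gowers must lose strictly less than $h^{27\epsilon}$ in $|Y_x|$. That means you need a BSG with essentially linear size loss, $|Y_x|\gtrsim K^{-1}|Y|$ with $K\approx h^{-26\epsilon}$, and no further $h^{-c\epsilon}$ losses; a generic ``$h^{C\epsilon}$'' is not enough. Your closing remark does not cover this. Here $h^{C\epsilon}$ competes with $h^{27\epsilon}$, not with $h^{\eta}$, so taking $\beta$ small relative to $\eta$ does not help.

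The paper avoids BSG entirely (Lemma~\ref{l:energy-reduced}). It removes the near-diagonal at the level of the energy. The region $|y_1-y_2|<2h^{\tau_1}$ contributes at most $8|X(h^{\frac12})|\,|Y|^2\,\mathbf M_{h^{\tau_1}}(Y)\,\mathbf M_{h^{\frac12-\epsilon}}(Y)$, and here $27>26$ is exactly what is needed. A popularity argument then produces, for each $x$, a set $\widetilde Y_x$ of off-diagonal pairs with two properties:
its total mass is $\gtrsim h^{27\epsilon}|X(h^{\frac12})|\,|Y(h^{\frac12})|^2$, and its image under $G(x,\cdot)$ is covered by $O(h^{-27\epsilon}|Y|_{h^{\frac12}})$ intervals of length $h^{\frac12}$.
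These sets depend on $x$ and are not product sets. That is fine, because Theorem~\ref{t:shmerkin-projection} applies to every large subset $\widetilde Y'$ for each good $x$, so no sumset structure is needed.

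Consequently you can drop three steps:
\begin{itemize}
\item the BSG step;
\item the uniformization of $Y$, which would anyway require a H\"older inequality for the quadrilinear energy to pass to a single piece;
\item the pigeonholing to a common subset, which is unjustified in general and unnecessary for the same reason.
\end{itemize}
Also, the hypotheses of Theorem~\ref{t:shmerkin-projection} as stated are single-scale, so no multiscale branching control is needed. Hypothesis \eqref{e:NUPa-Y-uni} is used only to turn $|\widetilde Y|\ge\frac12|Y|^2$ into the bounds \eqref{e:NUPa-int-3}.
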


We briefly discuss how the conditions of Theorem~\ref{t:NUP} are related to those of Proposition~\ref{l:NUP-additive}.
In~\S\ref{s:NUP-reduction-proof} below, we reduce to the case when the sets $X,Y$
are uniform (see~\S\ref{s:set-uniformity}); condition~\eqref{e:NUPa-Y-uni}
follows from uniformity. Conditions~\eqref{e:NUPa-Y-noncon1}, \eqref{e:NUPa-Y-noncon2}
are variants of conditions~\eqref{e:NUP-XY-1}, \eqref{e:NUP-XY-2}.
Condition~\eqref{e:NUPa-X-noncon} on $X$ follows from the condition~\eqref{e:NUP-XY-3}
on~$Y$ (with $r$ replaced by~$h/r$) and uniformity: if \eqref{e:NUP-XY-3} holds but~\eqref{e:NUPa-X-noncon} fails,
then the refined volume bound of Lemma~\ref{l:L2-intermediate} gives the $L^2\to L^2$ estimate~\eqref{e:NUP}.
Finally, condition~\eqref{e:NUP-XY-vol} does not appear explicitly in Proposition~\ref{l:NUP-additive}
but it is needed to show that the additive energy bound~\eqref{e:NUP-additive-conc} implies
the $L^2\to L^2$ estimate~\eqref{e:NUP}.
The nonlinear phase condition~\eqref{e:Theta-c-nlin}
is not needed in the reduction of Theorem~\ref{t:NUP} to Proposition~\ref{l:NUP-additive};
it is only used in the proof of Proposition~\ref{l:NUP-additive}.

Let us also explain how Proposition~\ref{l:NUP-additive} fails for the example discussed at the beginning of~\S\ref{s:NUP-discussion}.
If $\Theta(x,y)=-xy$ then $\vec\Theta(x,\vec y)=-x(y_1+y_2-y'_1-y'_2)$ and thus
$$
\mathcal E_r=[0,1]\times \{\vec y\in [0,1]^4\colon |y_1+y_2-y'_1-y'_2|\leq r\}. 
$$
Fix $\alpha\in (0,\tfrac12)$ and put $X:=[0,1]\cap (2\pi h^{1-\alpha}\mathbb Z+[0,0.1h])$,
$Y:=[0,1]\cap (h^\alpha\mathbb Z+[0,0.1h])$. These sets satisfy the assumptions~\eqref{e:NUPa-Y-uni}--\eqref{e:NUPa-X-noncon} for $h$ small enough as long as~$\zeta< 2\min(\alpha,\tfrac12-\alpha)$ and 
$\epsilon<\tfrac{1}{27}\min(\alpha,\beta)$; see~\S\ref{s:set-uniformity}. 

To estimate the additive energy $\mathbf E_r(X(h^{\frac12})\xrightarrow\Theta Y)$ from below, define the sets
$$
\begin{aligned}
Y_1\, & := [0,\tfrac13]\cap (h^\alpha\mathbb Z+0.1h[0,\tfrac13]),
\\
Y_2\, & := [\tfrac23,1]\cap (h^\alpha\mathbb Z+0.1h[\tfrac23,1]),
\\
Y_3\, & := [\tfrac13,\tfrac23] \cap (h^\alpha \mathbb Z+0.1h [\tfrac13,\tfrac23]).
\end{aligned}
$$
Then $Y_1,Y_2,Y_3,Y_1+Y_2-Y_3\subset Y$, so for each $(y_1,y_2,y'_1)\in Y_1\times Y_2\times Y_3$,
the set of $y'_2$ such that $|y_1+y_2-y'_1-y'_2|\leq r$ is
a $2r$-interval centered at a point in~$Y$. 
By the uniformity of~$Y$ discussed in~\S\ref{s:set-uniformity}, 
when $r\in [h,1]$ the measure of the intersection of this interval with $Y$ is bounded below by
$0.01\mathbf M_r(Y)$ and thus
$$
\big|(X(h^{\frac12})\times Y^4)\cap \mathcal E_r\big|\geq 0.01|X(h^{\frac12})|\cdot |Y_1|\cdot |Y_2|\cdot |Y_3|\mathbf M_r(Y).
$$
Since $|Y_1|,|Y_2|,|Y_3|\geq 0.1|Y|$, it follows that
$$
\mathbf E_r(X(h^{\frac12})\xrightarrow\Theta Y)\geq 10^{-5}\quad\text{for all }r\in[h,1]
$$
which contradicts~\eqref{e:NUP-additive-conc} for $h$ small enough.

\subsubsection{Nonstationary phase}

We will use the following nonstationary phase lemma, similar to~\cite[Lemma~5.1]{hgap}.
\begin{lemma}
  \label{l:ibp}
Assume that $\mathcal U\subset \mathbb R$ is an open set,
$\Phi\in C^\infty(\mathcal U;\mathbb R)$, $a\in \CIc(\mathcal U)$, and
the following inequalities hold for some positive numbers $R_1, R_2,(C_j)_{j\geq 0}$:
\begin{align}
  \label{e:ibpa-1}
|\partial_x\Phi(x)|&\geq C_0^{-1}R_1\quad\text{for all }x\in\supp a;
\\
  \label{e:ibpa-2}
|\partial_x^{j+1}\Phi(x)|&\leq C_j R_1R_2^j\quad\text{for all }x\in\supp a,\
j\geq 1;
\\
  \label{e:ibpa-3}
|\partial_x^j a(x)|&\leq C_j R_2^j\quad\text{for all }x\in \mathcal U,\
j\geq 0.
\end{align}
Then we have for each $N\geq 0$
\begin{equation}
  \label{e:ibp}
\bigg|\int_{\mathbb R} e^{i\Phi(x)}a(x)\,dx\bigg|\leq C'_N\Big({R_2\over R_1}\Big)^N|\supp a|
\end{equation}
where the constant $C'_N$ depends only on $N,C_0,C_1,\dots,C_N$.
\end{lemma}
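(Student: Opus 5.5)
The plan is the standard integration by parts with the operator $L:=\frac{1}{i\partial_x\Phi}\partial_x$, which satisfies $L e^{i\Phi}=e^{i\Phi}$ on $\supp a$. Writing $e^{i\Phi}=L^Ne^{i\Phi}$ and integrating by parts $N$ times (no boundary terms since $a$ is compactly supported in $\mathcal U$) gives
\[
\int_{\mathbb R}e^{i\Phi(x)}a(x)\,dx=\int_{\mathbb R}e^{i\Phi(x)}\,(L^T)^Na(x)\,dx,\qquad
L^Tb:=-\partial_x\Big(\frac{b}{i\partial_x\Phi}\Big).
\]
Since $(L^T)^Na$ is supported in $\supp a$, it suffices to show the pointwise bound $|(L^T)^Na|\le C'_N(R_2/R_1)^N$ on $\supp a$; integrating over $\supp a$ then gives~\eqref{e:ibp}. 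For $N=0$ this is just~\eqref{e:ibpa-3} with $j=0$.

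For the pointwise bound, I would set $\psi:=1/\partial_x\Phi$ on a neighborhood of $\supp a$ and prove by induction on $j$ that
\[
|\partial_x^j\psi|\le C''_jR_1^{-1}R_2^j\quad\text{on }\supp a.
\]
The neighborhood causes no trouble: \eqref{e:ibpa-1} gives $\partial_x\Phi\neq0$ on the compact set $\supp a$, hence on a neighborhood, and all the bounds are only evaluated on $\supp a$. The estimate for $\partial_x^j\psi$ follows from Fa\`a di Bruno's formula applied to $t\mapsto 1/t$ composed with $\partial_x\Phi$. Each term has the form $(\partial_x\Phi)^{-1-k}\prod_{i=1}^k\partial_x^{m_i+1}\Phi$ with $m_i\ge1$ and $\sum m_i=j$. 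By~\eqref{e:ibpa-1} and~\eqref{e:ibpa-2} such a term is bounded by $C R_1^{-1-k}\prod_i (R_1R_2^{m_i})=CR_1^{-1}R_2^j$. Equivalently, one can differentiate the identity $\psi\,\partial_x\Phi=1$ and induct on $j$.

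Next, expand $(L^T)^Na$ by the Leibniz rule. It is a linear combination, with coefficients depending only on $N$, of products
\[
\partial_x^{k_1}\psi\cdots\partial_x^{k_N}\psi\;\partial_x^{k_0}a,\qquad k_0+k_1+\dots+k_N=N.
\]
This is immediate by induction: each application of $L^T$ multiplies by $\psi$ and takes one derivative, which the Leibniz rule distributes among the factors. By the bound on $\partial_x^j\psi$ and~\eqref{e:ibpa-3}, each product is bounded by $C R_1^{-N}R_2^{k_1+\dots+k_N}R_2^{k_0}=C(R_2/R_1)^N$. The constants involve only $C_0,\dots,C_N$, since at most $N$ derivatives ever fall on $\psi$ or $a$, and derivatives of $\psi$ of order at most $N$ involve $\partial_x^{m+1}\Phi$ only for $m\le N$.

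The only mildly delicate step is the combinatorial bookkeeping in Fa\`a di Bruno's formula, which shows that each derivative of $\psi$ costs exactly a factor $R_2$ and no extra power of $R_1$. This works because in~\eqref{e:ibpa-2} every higher derivative of $\Phi$ carries one factor of $R_1$, which cancels against one of the $(\partial_x\Phi)^{-1}$ factors.
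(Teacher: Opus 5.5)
Your proposal is correct and follows essentially the same route as the paper: integrate by parts $N$ times with $\mathcal L=-i(\partial_x\Phi)^{-1}\partial_x$ and bound $(\mathcal L^t)^Na$ pointwise on $\supp a$. The only difference is bookkeeping. The paper writes $(\mathcal L^t)^Na$ directly as a combination of terms $(\partial_x\Phi)^{-N-K}\prod_{k=1}^K\partial_x^{j_k+1}\Phi\,\partial_x^{j_0}a$, whereas you first bound derivatives of $\psi=1/\partial_x\Phi$ via Fa\`a di Bruno and then apply Leibniz, which unpacks to the same expression.
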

\begin{proof}
Consider the following first order differential operator on $\mathcal U$:
$$
\mathcal L:=-i(\partial_x\Phi(x))^{-1}\partial_x.
$$
Then $e^{i\Phi}=\mathcal Le^{i\Phi}$, so we can integrate by parts $N$ times to get
$$
\int_{\mathbb R} e^{i\Phi(x)}a(x)\,dx
=\int_{\mathbb R} \big(\mathcal L^N e^{i\Phi(x)}\big)a(x)\,dx
=\int_{\mathbb R} e^{i\Phi(x)}
(\mathcal L^t)^Na(x)\,dx
$$
where $\mathcal L^t$ is the transpose operator:
$$
\mathcal L^tf(x)=i\partial_x\bigg({f(x)\over\partial_x\Phi(x)}\bigg).
$$
It follows that
\begin{equation}
  \label{e:ibp-1}
\bigg|\int_{\mathbb R} e^{i\Phi(x)}a(x)\,dx\bigg|
\leq |\supp a|\cdot \sup_{x\in\supp a}|(\mathcal L^t)^Na(x)|.
\end{equation}
The function $(\mathcal L^t)^ Na(x)$ is a linear combination with constant
coefficients of terms of the form
$$
(\partial_x\Phi(x))^{-N-K}\Big(\prod_{k=1}^K {\partial_x^{j_k+1}\Phi(x)}\Big)\partial_x^{j_0}a(x)
$$
where $j_0\geq 0$, $j_1,\dots,j_K\geq 1$, and $j_0+j_1+\dots+j_K= N$.
By~\eqref{e:ibpa-1}--\eqref{e:ibpa-3}, the sup-norm of this expression
over $x\in\supp a$ is bounded above by
$$
C'_N R_1^{-N-K}\Big(\prod_{k=1}^K R_1R_2^{j_k}\Big)R_2^{j_0}=C'_N\Big({R_2\over R_1}\Big)^N.
$$
Together with~\eqref{e:ibp-1} this gives~\eqref{e:ibp}.
\end{proof}

\subsubsection{Volume bounds and orthogonality}

We show here a refined version of~\eqref{e:B-NUP-L2} and~\eqref{e:uncertainty-volume-2}
which argues at some intermediate scale. As before, we assume that
the phase function $\Theta$ and the amplitude $\chi_{\mathcal B}$ in the definition~\eqref{e:B-NUP-def}
of the operator~$\mathcal B$ satisfy the conditions~\eqref{e:NUPred} and~\eqref{e:Theta-mix-12}.
\begin{lemma}
  \label{l:L2-intermediate}
We have for all $h\in (0,1]$, $r\in [h,1]$, and $X,Y\subset \mathbb R$
\begin{equation}
  \label{e:L2-intermediate}
\|\indic_X\mathcal B\indic_Y\|_{L^2\to L^2}\leq Ch^{-\frac12}\sqrt{\mathbf M_r(X)\,|Y(h/r)|}.
\end{equation}
Here the constant $C$ depends only on the $C^4$ norm of $\Theta$ and the $C^2$ norm of $\chi_{\mathcal B}$.
\end{lemma}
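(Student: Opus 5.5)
We prove \eqref{e:L2-intermediate} with a $TT^*$ argument after decomposing $X$ into pieces of size $r$. Cover $\mathbb R$ by the nonoverlapping $r$-intervals $I_k:=[kr,(k+1)r]$, $k\in\mathbb Z$, and put $X_k:=X\cap I_k$, so that $|X_k|\le\mathbf M_r(X)$. Since $\indic_X=\sum_k\indic_{X_k}$ and the $X_k$ are disjoint,
$$
\|\indic_X\mathcal B\indic_Y f\|_{L^2}^2=\sum_k\|\indic_{X_k}\mathcal B\indic_Y f\|_{L^2}^2 .
$$
It therefore suffices to find, for each $k$, a function $\tilde f_k$ with $\|\indic_{X_k}\mathcal B\indic_Y f\|_{L^2}^2\lesssim h^{-1}\mathbf M_r(X)\,|Y(h/r)|\,\|\tilde f_k\|_{L^2}^2$ and almost orthogonality $\sum_k\|\tilde f_k\|^2\lesssim\|f\|^2$.

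\textbf{Localizing in $y$ (the key step).} Fix $k$ and let $x_k$ be the center of $I_k$. The symplectomorphism~\eqref{e:kappa-Theta-NUP} shows that $\mathcal B$ maps functions frequency-localized near $\eta$ to functions concentrated near the $x$ solving $-\partial_y\Theta(x,y)=\eta$. Conversely, for $x\in I_k$ the kernel $e^{i\Theta(x,y)/h}$ equals $e^{i\Theta(x_k,y)/h}$ times a factor $e^{i(\Theta(x,y)-\Theta(x_k,y))/h}$ whose $y$-derivative is $h^{-1}\partial_y\Theta(x,y)-h^{-1}\partial_y\Theta(x_k,y)=\mathcal O(r/h)$ by~\eqref{e:Theta-mix-12}.

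The plan is to introduce a smooth cutoff $\psi_k$ adapted to $I_k$ (a partition of unity at scale $r$) and write $\indic_{X_k}\mathcal B=\indic_{X_k}\psi_k\mathcal B$. Then apply a $TT^*$ argument to $A_k:=\psi_k\mathcal B\indic_Y$ restricted to $X_k$. One gets
$$
\|\indic_{X_k}\mathcal B\indic_Y f\|_{L^2}^2\le |X_k|\,\|\psi_k\mathcal B\indic_Y f\|_{L^\infty}^2 ,
$$
and
$$
\psi_k\mathcal B\indic_Yf(x)=(2\pi h)^{-1/2}\int e^{i\Theta(x,y)/h}\chi_{\mathcal B}\,\indic_Y f\,dy .
$$
Pairing with $e^{-i\Theta(x,y)/h}$ and applying Cauchy--Schwarz in $y$ only gives $|Y|$, which is not what we want. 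Instead, use the $L^2$ structure: $|\psi_k\mathcal B\indic_Yf(x)|^2$ is bounded via Cauchy--Schwarz by $h^{-1}|Y(h/r)|\cdot(\dots)$ after replacing $\indic_Y f$ by its convolution at scale $h/r$ in the variable $\eta=-\partial_y\Theta(x_k,y)$.

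Concretely, the function $g_k:=\mathcal B^*(\psi_k^2\mathcal B\indic_Yf)$ is microlocalized where $\eta$ lies in an $\mathcal O(r)$-window. By the uncertainty principle it is therefore essentially constant on $h/r$-intervals in $y$. Equivalently, apply Lemma~\ref{l:ibp} with $R_1\sim$ the frequency separation and $R_2=r/h$ to the kernel of $\mathcal B\psi_k^2\mathcal B^*$: after integrating in $x$ against $\psi_k^2$, the kernel $K_k(y,y')$ is bounded by $C h^{-1}r\,(1+r|y-y'|/h)^{-N}$, since $\partial_x(\Theta(x,y)-\Theta(x,y'))\approx|y-y'|$ by~\eqref{e:Theta-mix-12}, and the $x$-amplitude $\psi_k^2$ has derivatives of size $r^{-j}$. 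This is where the $C^4$ norm of $\Theta$ and the $C^2$ norm of $\chi_{\mathcal B}$ enter, taking $N=2$.

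\textbf{Combining the pieces.} Write $\|\indic_{X_k}\mathcal B\indic_Y f\|^2\le|X_k|\sup_x|\psi_k\mathcal B\indic_Yf(x)|^2$. Then bound the sup via Cauchy--Schwarz by
$$
h^{-1}\int|\indic_Y f|^2 w_k\,dy\cdot\int_Y w_k^{-1}\dots ,
$$
using the $x$-localization to smooth in $y$ at scale $h/r$. The result should be
$$
\|\indic_{X_k}\mathcal B\indic_Y f\|^2\lesssim h^{-1}\mathbf M_r(X)\,|Y(h/r)|\,\|\Pi_k f\|^2 ,
$$
where $\Pi_k$ is an (approximate) frequency localization of $f$ to an $r$-window around $-\partial_y\Theta(x_k,\cdot)$. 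The bound $\sum_k\|\Pi_kf\|^2\lesssim\|f\|^2$ follows from Cotlar--Stein or directly from $\sum_k\psi_k^2\le 1$ together with $\|\mathcal B\|\le C$. The latter is the unitarity bound~\eqref{e:B-NUP-L2}, which would itself be proved along the way as the case $r=1$-type estimate via the same kernel bound and Schur's test.

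\textbf{Main obstacle.} The delicate point is making the ``essentially constant on $h/r$-scales'' step rigorous with the rough indicator $\indic_Y$. I would first try Schur's test on $\indic_Y\mathcal B^*\psi_k^2\mathcal B\indic_Y$ with the kernel bound above. Summing that kernel over $y'\in Y$ gives $\sup_y\int_Y Ch^{-1}r(1+r|y-y'|/h)^{-2}dy'\lesssim h^{-1}r\cdot\mathbf M_{h/r}(Y)\cdot(\dots)$. This is too weak by itself, so I would instead combine the $L^\infty$ bound on $X_k$ with the smoothing: first dominate $\indic_Y\le\phi*\indic_{Y(h/r)}$ by a smooth bump, and then apply the $TT^*$ bound to $\mathcal B$ acting on functions supported in $Y(h/r)$, for which $\int_{Y(h/r)}$ replaces the sup.
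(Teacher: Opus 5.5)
Your proposal does not prove \eqref{e:L2-intermediate}. The decisive inequality $\|\indic_{X_k}\mathcal B\indic_Y f\|^2\lesssim h^{-1}\mathbf M_r(X)\,|Y(h/r)|\,\|\Pi_k f\|^2$ is only announced (``the result should be''), and $\Pi_k$ is never defined. Your orthogonality argument ($\sum_k\psi_k^2\le 1$, $\|\mathcal B\|\le C$) covers the choice $\Pi_kf=\psi_k\mathcal Bf$. For that choice the local bound is false, because the rough multiplier $\indic_Y$ appears on the left but not on the right. Take $Y$ an interval and $f$ a wave packet at an endpoint of $Y$. Then $\psi_k\mathcal Bf=\mathcal O(h^\infty)$ for $I_k$ away from the $x$-projection of its image under $\varkappa_\Theta$, while the jump of $\indic_Yf$ leaves $\mathcal B\indic_Yf$ only polynomially small on $I_k$. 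The one estimate you do carry out is Schur's test for $\indic_Y\mathcal B^*\psi_k^2\mathcal B\indic_Y$ after integrating by parts in $x$. It gives $\|\indic_{X_k}\mathcal B\indic_Y\|^2\lesssim h^{-1}r\,\mathbf M_{h/r}(Y)$, which involves neither $|X_k|$ nor $|Y(h/r)|$, and it cannot be summed over $k$ without orthogonality on the input side. So the Gram operator on the $y$ side, with smoothness supplied by an $x$-cutoff at scale $r$, is the wrong object here.

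The missing idea is the reverse: smooth on the $Y$ side and form the Gram operator on the $X$ side, so the rough set $X$ enters only through Schur's test. Take $\chi_Y$ with $0\le\chi_Y\le1$, $\chi_Y=1$ on $Y$, $\supp\chi_Y\subset Y(h/r)$, and $|\partial_y^j\chi_Y|\le C_j(r/h)^j$. For instance, mollify $\indic_{Y(h/(2r))}$ at scale $h/(2r)$; this is essentially your $\phi*\indic_{Y(h/r)}$. Then $\|\indic_X\mathcal B\indic_Y\|^2\le\|\indic_X\mathcal B\chi_Y^2\mathcal B^*\indic_X\|$, and the kernel
\[
K(x,x')=(2\pi h)^{-1}\int e^{\frac ih(\Theta(x,y)-\Theta(x',y))}\chi_Y(y)^2\chi_{\mathcal B}(x,y)\overline{\chi_{\mathcal B}(x',y)}\,dy
\]
is an oscillatory integral in $y$. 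By \eqref{e:Theta-mix-12} its phase has gradient at least $|x-x'|/h$, and its amplitude has derivatives $\mathcal O((r/h)^j)$. Lemma~\ref{l:ibp} with $R_1=|x-x'|/h$, $R_2=r/h$, $N=2$, together with $\supp\chi_Y\subset Y(h/r)$, gives $|K(x,x')|\le Ch^{-1}(1+|x-x'|/r)^{-2}|Y(h/r)|$. Schur's test, with the dyadic bound $|X\cap[x-2^\ell r,x+2^\ell r]|\le 2^{\ell+1}\mathbf M_r(X)$ from \eqref{e:M-r-monotone}, then yields \eqref{e:L2-intermediate} directly. No splitting of $X$ and no almost orthogonality are needed, and $N=2$ is what limits the dependence to the $C^4$ norm of $\Theta$ and the $C^2$ norm of $\chi_{\mathcal B}$. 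Your last sentence points at the right smoothing, but the integration by parts must be in $y$, using the $h/r$-regularity of $\chi_Y$, not in $x$.
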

\begin{remark}
\label{l:L2-intermediate-remark}
If $r=1$ and we assume that $Y$ is a union of~$h$-intervals, then~\eqref{e:L2-intermediate}
gives the volume bound~\eqref{e:uncertainty-volume-2}.
If $r=h$, then~\eqref{e:L2-intermediate} with $X=Y=[0,1]$ implies the bound~\eqref{e:B-NUP-L2}.
\end{remark}
\begin{proof}
Because of the support condition~\eqref{e:NUPred}, without loss of generality we may assume that $X,Y\subset[0,1]$.

\noindent 1. Fix a cutoff function
$$
\chi\in \CIc((-1,1);\mathbb R),\quad
\int_{\mathbb R}\chi(y)\,dy=1.
$$
Define the cutoff adapted to the set $Y$
$$
\chi_Y\in\CIc(\mathbb R),\quad
\chi_Y(y)= 2h^{-1}r\int_{Y(h/(2r))}\chi(2h^{-1}r(y-y'))\,dy'.
$$
Then
$$
0\leq\chi_Y\leq 1,\quad
\supp\chi_Y\subset Y(h/r),\quad
\chi_Y=1\quad\text{on }Y,
$$
and we have the derivative bounds with some universal constants $C_j$
\begin{equation}
  \label{e:L2i-1}
\sup|\partial^j_y\chi_Y|\leq C_j(r/h)^j\quad\text{for all }j\geq 0.
\end{equation}
Now, we have
$$
\|\indic_X\mathcal B\indic_Y\|_{L^2\to L^2}^2
\leq \|\indic_X\mathcal B\chi_Y\|_{L^2\to L^2}^2
=\|\indic_X\mathcal B\chi_Y^2\mathcal B^*\indic_X\|_{L^2\to L^2}.
$$
We compute
$$
\begin{aligned}
\mathcal B\chi_Y^2\mathcal B^*f(x)&=\int_{\mathbb R}K(x,x')f(x')\,dx'\quad\text{where}
\\
K(x,x')&=(2\pi h)^{-1}\int_{\mathbb R}e^{{i\over h}(\Theta(x,y)-\Theta(x',y))}
\chi_Y(y)^2\chi_{\mathcal B}(x,y)\overline{\chi_{\mathcal B}(x',y)}\,dy.
\end{aligned}
$$
By Schur's bound (see e.g.~\cite[Lemma~18.1.12]{Hormander3}), we have
\begin{equation}
  \label{e:L2i-2}
\|\indic_X\mathcal B\indic_Y\|_{L^2\to L^2}^2\leq \sup_{x\in X}\int_{X}
|K(x,x')|\,dx'.
\end{equation}

\noindent 2. We claim that for all $x,x'\in [0,1]$
\begin{equation}
  \label{e:L2i-3}
|K(x,x')|\leq Ch^{-1}\bigg(1+{|x-x'|\over r}\bigg)^{-2}|Y(h/r)|.
\end{equation}
If $|x-x'|\leq r$, this is immediate since $\supp\chi_Y\subset Y(h/r)$. Assume now that $|x-x'|\geq r$.
We use Lemma~\ref{l:ibp} with
$$
\mathcal U:=(0,1),\quad
\Phi(y):={\Theta(x,y)-\Theta(x',y)\over h},\quad
a(y):=\chi_Y(y)^2\chi_{\mathcal B}(x,y)\overline{\chi_{\mathcal B}(x',y)}.
$$
We have for all $j\geq 0$
$$
\partial_y^{j+1}\Phi(y)=h^{-1}\int_{x'}^x \partial_x\partial_y^{j+1}\Theta(x'',y)\,dx'',
$$
so by~\eqref{e:Theta-mix-12} and~\eqref{e:L2i-1} we have for all $y\in [0,1]$ and $j\geq 0$
$$
|\partial_y\Phi(y)|\geq {|x-x'|\over h},\quad
|\partial_y^{j+1}\Phi(y)|\leq C_j{|x-x'|\over h},\quad
|\partial_y^ja(y)|\leq C_j \Big({r\over h}\Big)^j.
$$
The bounds~\eqref{e:ibpa-1}--\eqref{e:ibpa-3} hold with
$$
R_1:={|x-x'|\over h},\quad
R_2:={r\over h}.
$$
Since $\supp a\subset\supp \chi_Y\subset Y(h/r)$, we see that~\eqref{e:ibp} with $N:=2$ implies~\eqref{e:L2i-3} as needed. An examination of the above argument shows that the constant $C$ in~\eqref{e:L2i-3} only depends on the $C^4$ norm of~$\Theta$ and the $C^2$ norm of~$\chi_{\mathcal B}$.

\noindent 3. Fix any $x\in [0,1]$ and consider the dyadic intervals
$$
J_\ell:=[x-2^\ell r,x+2^\ell r],\quad
\ell\geq 0.
$$
Then, denoting $J_{-1}:=\emptyset$,
$$
\begin{aligned}
\int_X |K(x,x')|\,dx'
&=\sum_{\ell\geq 0}\int_{X\cap (J_\ell\setminus J_{\ell-1})}|K(x,x')|\,dx'
\\
&\leq Ch^{-1}\sum_{\ell\geq 0} 2^{-2\ell}|X\cap J_\ell|\cdot |Y(h/r)|
\\
&\leq Ch^{-1}\mathbf M_r(X)\,|Y(h/r)|
\end{aligned}
$$
where the first inequality follows from~\eqref{e:L2i-3} and
in the second inequality we use that $|X\cap J_\ell|\leq \mathbf M_{2^{\ell+1}r}(X)\leq 2^{\ell+1}\mathbf M_r(X)$ by~\eqref{e:M-r-monotone}. Together with~\eqref{e:L2i-2} this gives~\eqref{e:L2-intermediate}.
\end{proof}

\subsubsection{An $L^4$ estimate}

We now bound the norm of $\indic_X\mathcal B\indic_Y$ in terms of the energy
quantity $\mathbf E_r$ introduced in~\eqref{e:E-r-2-def}. For this it is convenient
to consider the $L^\infty\to L^4$ norm; later in~\eqref{e:L2-L2} we interpolate between
this and the $L^1\to L^\infty$ norm to bound the $L^2\to L^2$ norm.

We start with the case when $X$ is replaced by a $2h^{\frac12}$-interval:
\begin{lemma}
  \label{l:Linfty-L4-interval}
Let $x_0\in [0,1]$, $Y\subset [0,1]$, and put $J:=[x_0-h^{\frac12},x_0+h^{\frac12}]$. Then
we have for all $\epsilon>0$ and $N$
\begin{equation}
  \label{e:Linfty-L4-interval}
\|\indic_J \mathcal B\indic_Y\|_{L^\infty\to L^4}^4\leq
Ch^{-\frac32}\big|\{\vec y\in Y^4\colon (x_0,\vec y)\in \mathcal E_{h^{\frac12-\epsilon}}\}\big|+Ch^N.
\end{equation}
Here the constant $C$ depends only on some $(N,\epsilon)$-dependent $C^\infty$ seminorms
of $\Theta,\chi_{\mathcal B}$.
\end{lemma}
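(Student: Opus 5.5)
The plan is to write the $L^\infty\to L^4$ norm as the supremum over $f$ with $\|f\|_{L^\infty}\le 1$ of $\|\indic_J\mathcal B(\indic_Y f)\|_{L^4}^4$, and then to expand the fourth power. Put $g:=\indic_Y f$. Then $|\mathcal B g(x)|^4=\mathcal Bg\,\mathcal Bg\,\overline{\mathcal Bg}\,\overline{\mathcal Bg}$, and this expands as
$$
\int_J|\mathcal Bg(x)|^4dx=(2\pi h)^{-2}\int_{\mathbb R^4} g(y_1)g(y_2)\overline{g(y'_1)g(y'_2)}\,I(\vec y)\,d\vec y,
$$
where $I(\vec y):=\int_J e^{\frac ih\vec\Theta(x,\vec y)}\prod\chi_{\mathcal B}\,dx$ and the product runs over the four amplitude factors. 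We cannot put a cutoff on $J$ directly and still integrate by parts. So we first bound $\indic_J\le\psi\big((x-x_0)/h^{\frac12}\big)$, where $\psi\in\CIc((-2,2))$ satisfies $0\le\psi\le1$ and $\psi=1$ on $[-1,1]$. This gives a smooth amplitude $a(x)$ supported in an interval of length $4h^{\frac12}$ with $|\partial_x^ja|\le C_jh^{-j/2}$. Since $|g|\le\indic_Y$, we obtain
$$
\|\indic_J\mathcal B\indic_Y f\|_{L^4}^4\le Ch^{-2}\int_{Y^4}|\tilde I(\vec y)|\,d\vec y,
$$
where $\tilde I$ is the integral with amplitude $a$.

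Next we split $Y^4$ into the set of $\vec y$ with $(x_0,\vec y)\in\mathcal E_{h^{\frac12-\epsilon}}$ and its complement. On the first set we use the trivial bound $|\tilde I|\le Ch^{\frac12}$, and this gives exactly the main term $Ch^{-\frac32}|\{\vec y\in Y^4:(x_0,\vec y)\in\mathcal E_{h^{\frac12-\epsilon}}\}|$.

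On the complement we have $|\partial_x\vec\Theta(x_0,\vec y)|>h^{\frac12-\epsilon}$, and we apply Lemma~\ref{l:ibp} to the phase $\Phi=\vec\Theta(\cdot,\vec y)/h$. This requires lower and upper bounds on the derivatives of $\Phi$ over the support of $a$, namely for $|x-x_0|\le2h^{\frac12}$. The key step, and the only delicate point, is to obtain these bounds without any assumption that $\vec y$ is near a diagonal. We use Taylor expansion together with an interpolation between the two regimes of $|\partial_x\vec\Theta(x_0,\vec y)|$.

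First, $|\partial_x^{j+1}\vec\Theta|\le C_j$ everywhere, since $\Theta$ is smooth on $[0,1]^2$ and the amplitude vanishes outside it. Write $D:=|\partial_x\vec\Theta(x_0,\vec y)|\ge h^{\frac12-\epsilon}$. Then on $\supp a$ we have
$$
|\partial_x\vec\Theta(x,\vec y)|\ge D-Ch^{\frac12}\ge D/2
$$
for $h$ small, because $h^{\frac12}\ll h^{\frac12-\epsilon}$.

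We take $R_1:=D/h$ and $R_2:=h^{-\frac12}$. The bound \eqref{e:ibpa-2} then requires $|\partial_x^{j+1}\Phi|\le C_jR_1R_2^j$, that is, $C_j/h\le C_jDh^{-1-j/2}$. This holds because $D\ge h^{\frac12-\epsilon}\ge h^{j/2}$ for $j\ge1$. The amplitude $a$, together with the smooth factors $\chi_{\mathcal B}(x,y_k)$, satisfies \eqref{e:ibpa-3} with this $R_2$.

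Lemma~\ref{l:ibp} then gives
$$
|\tilde I|\le C_N\big(R_2/R_1\big)^N h^{\frac12}=C_N\big(h^{\frac12}/D\big)^Nh^{\frac12}\le C_Nh^{\epsilon N+\frac12}.
$$
Integrating this over $\vec y\in[0,1]^4$ and multiplying by $h^{-2}$ gives $Ch^{\epsilon N-\frac32}$. Choosing $N$ large in terms of $\epsilon$ turns this into the $Ch^N$ error term.

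The constants depend only on finitely many derivatives of $\Theta$, $\chi_{\mathcal B}$ and $\psi$, and the number of derivatives needed depends on $N$ and $\epsilon$, as the statement requires. I expect the main obstacle to be the uniform verification of \eqref{e:ibpa-2} in the regime where $D$ is large, which the scaling choice $R_1=D/h$ handles. The rest of the argument is bookkeeping.
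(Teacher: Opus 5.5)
Your proposal is correct and follows essentially the same route as the paper. Both arguments dominate $\indic_J$ by a smooth cutoff at scale $h^{\frac12}$, expand the fourth power, use the trivial bound on the set where $(x_0,\vec y)\in\mathcal E_{h^{\frac12-\epsilon}}$, and elsewhere apply Lemma~\ref{l:ibp} with $R_1=|\partial_x\vec\Theta(x_0,\vec y)|/h$ and $R_2=h^{-\frac12}$.

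The only difference is cosmetic. The paper proves $|\tilde I(\vec y)|\leq C_{\widetilde N}h^{\frac12}\big(1+|\partial_x\vec\Theta(x_0,\vec y)|/h^{\frac12}\big)^{-\widetilde N}$ for all $\vec y$, splitting at a threshold $\widetilde C h^{\frac12}$ rather than $h^{\frac12-\epsilon}$. This avoids your requirement that $h$ be small. In your version, the range $h\geq h_0(\epsilon,\Theta)$ must be covered separately by the trivial bound $Ch^{-\frac32}\leq Ch_0^{-\frac32-N}h^N$.
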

\begin{proof}
\noindent 1. Fix a cutoff function
$$
\chi\in \CIc((-2,2);[0,1]),\quad
\chi=1\quad\text{on }[-1,1]
$$
and define the function
$$
\chi_J\in\CIc(\mathbb R),\quad
\chi_J(x)=\chi\Big({x-x_0\over h^{\frac12}}\Big).
$$
Note that $\chi_J=1$ on $J$.

Take $f\in L^\infty(\mathbb R)$. Then, using the notation~\eqref{e:vec-y} and~\eqref{e:Theta-vec-def},
\begin{equation}
  \label{e:Li4i-0}
\begin{aligned}
\|\indic_J\mathcal B\indic_Y f\|_{L^4}^4
&\leq
\int_{\mathbb R}\chi_J(x)|\mathcal B\indic_Y f(x)|^4\,dx
\\
&=\int_{\mathbb R}\chi_J(x)\mathcal B\indic_Y f(x)\mathcal B\indic_Y f(x)
\overline{\mathcal B\indic_Y f(x)\mathcal B\indic_Y f(x)}\,dx
\\
&=\int_{Y^4}F_J(\vec y)f(y_1)f(y_2)\overline{f(y'_1)f(y'_2)}\,d\vec y
\end{aligned}
\end{equation}
where
$$
\begin{aligned}
F_J(\vec y)\,&:=
(2\pi h)^{-2}\int_{\mathbb R}e^{{i\over h}\vec\Theta(x,\vec y)}\vec\chi_J(x,\vec y)\,dx,
\\
\vec\chi_J(x,\vec y)\,&:=
\chi_J(x)\chi_{\mathcal B}(x,y_1)\chi_{\mathcal B}(x,y_2)
\overline{\chi_{\mathcal B}(x,y'_1)\chi_{\mathcal B}(x,y'_2)}.
\end{aligned}
$$

\noindent 2. We claim that for all $\vec y\in [0,1]^4$ and $\widetilde N$
\begin{equation}
  \label{e:Li4i-1}
|F_J(\vec y)|\leq C_{\widetilde N} h^{-\frac32}\bigg(1+{|\partial_x\vec\Theta(x_0,\vec y)|\over h^{\frac12}}\bigg)^{-\widetilde N}.
\end{equation}
For that we use Lemma~\ref{l:ibp} with
$$
\mathcal U:=(0,1),\quad
\Phi(x):={\vec\Theta(x,\vec y)\over h},\quad
a(x):=\vec\chi_J(x,\vec y).
$$
Note that $\supp a\subset \supp\chi_J\subset [x_0-2h^{\frac12},x_0+2h^{\frac12}]$,
so $|\supp a|\leq 4h^{\frac12}$ and for all $x\in\supp a$ we have
$$
|\partial_x\vec\Theta(x,\vec y)-\partial_x\vec\Theta(x_0,\vec y)|\leq \widetilde Ch^{\frac12},\quad
\widetilde C:=\max(8\sup|\partial_x^2\Theta|,1).
$$
If $|\partial_x\vec\Theta(x_0,\vec y)|\leq 2\widetilde Ch^{\frac12}$, then~\eqref{e:Li4i-1} is immediate.
Assume now that $|\partial_x\vec\Theta(x_0,\vec y)|\geq 2\widetilde Ch^{\frac12}$. We have for all $x\in \supp a$ and $j\geq 0$
$$
|\partial_x\Phi(x)|\geq {|\partial_x\vec\Theta(x_0,\vec y)|\over 2h},\quad
|\partial_x^{j+1}\Phi(x)|\leq {C_j\over h},\quad
|\partial_x^j a(x)|\leq C_j h^{-\frac12 j}.
$$
The bounds~\eqref{e:ibpa-1}--\eqref{e:ibpa-3} hold with
$$
R_1:={|\partial_x\vec\Theta(x_0,\vec y)|\over h},\quad
R_2:=h^{-\frac12}.
$$
Then~\eqref{e:ibp} gives~\eqref{e:Li4i-1} as needed.

\noindent 3. By~\eqref{e:Li4i-0}, we have

$$
\begin{aligned}
{\|\indic_J\mathcal B\indic_Y f\|_{L^4}^4\over \|f\|_{L^\infty}^4}
\leq 
\int_{Y^4}|F_J(\vec y)|\,d\vec y
\leq &\,
\big|\{\vec y\in Y^4\colon |\partial_x\vec\Theta(x_0,\vec y)|\leq h^{\frac12-\epsilon}\}\big|\sup_{\vec y\in [0,1]^4}|F_J(\vec y)|
\\
&+ \sup_{\vec y\in [0,1]^4\colon |\partial_x\vec\Theta(x_0,\vec y)|\geq h^{\frac12-\epsilon}}
|F_J(\vec y)|.
\end{aligned}
$$
By~\eqref{e:Li4i-1}, we see that the first supremum on the right-hand side
is bounded above by $Ch^{-\frac32}$ and the second one is bounded by
$C_{\widetilde N}h^{\epsilon\widetilde N-\frac32}\leq Ch^N$,
where we take $\widetilde N\geq\epsilon^{-1}(N+\frac32)$. This finishes the proof of~\eqref{e:Linfty-L4-interval}.
\end{proof}
\begin{remark}
If we instead choose $J$ to have length $2h^{\alpha}$ for some $\alpha\in [0,1]$, then
the bounds~\eqref{e:ibpa-1}--\eqref{e:ibpa-3} hold as long as $|\partial_x\vec\Theta(x_0,\vec y)|\geq 2\widetilde C h^\alpha$, with $R_1:=h^{-1}|\partial_x\Theta(x_0,\vec y)|$ and $R_2:=h^{-\alpha}$.
The estimate~\eqref{e:ibp} only gives an improvement over the $N=0$ case
when $R_2\ll R_1$, that is $|\partial_x\vec\Theta(x_0,\vec y)|\gg h^{1-\alpha}$.
The choice of $\alpha:=\tfrac12$ in Lemma~\ref{l:Linfty-L4-interval} is helpful because the two lower bounds
on $|\partial_x\vec\Theta(x_0,\vec y)|$ both feature the same power of~$h$, namely $h^{\frac12}$.
\end{remark}
We now estimate the norm of $\indic_X\mathcal B\indic_Y$ by summing Lemma~\ref{l:Linfty-L4-interval}
over a collection of $h^{\frac12}$-intervals covering~$X$:
\begin{lemma}
  \label{l:Linfty-L4}
Let $X,Y\subset [0,1]$ have positive measure. Then
for all $\epsilon>0$ and~$N$
\begin{gather}
  \label{e:Linfty-L4}
\begin{aligned}
\|\indic_X\mathcal B\indic_Y\|_{L^\infty\to L^4}
\leq&\,
Ch^{-\frac12}|X(h^{\frac12})|^{\frac14}\,|Y|^{\frac34}\mathbf M_{h^{\frac12-\epsilon}}(Y)^{\frac14}
\mathbf E_{h^{\frac12-\epsilon}}(X(h^{\frac12})\xrightarrow\Theta Y)^{\frac14}\\
&+Ch^N,
\end{aligned}
\\
  \label{e:L2-L2}
\begin{aligned}
\|\indic_X\mathcal B\indic_Y\|_{L^2\to L^2}
\leq&\,
Ch^{-\frac12}|X|^{\frac38}\,
|Y|^{\frac38}|X(h^{\frac12})|^{\frac 18}\,\mathbf M_{h^{\frac12-\epsilon}}(Y)^{\frac18}
\mathbf E_{h^{\frac12-\epsilon}}(X(h^{\frac12})\xrightarrow\Theta Y)^{\frac18}\\
&+Ch^N.
\end{aligned}
\end{gather}
Here the constant $C$ depends only on some $(N,\epsilon)$-dependent $C^\infty$ seminorms of $\Theta,\chi_{\mathcal B}$.
\end{lemma}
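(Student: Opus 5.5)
The plan is to cover $X$ by $h^{\frac12}$-intervals, apply Lemma~\ref{l:Linfty-L4-interval} on each one, and convert the resulting sum of $\vec y$-fibre measures into a volume integral over $X(h^{\frac12})$. The $L^2\to L^2$ bound will then come from interpolating the $L^\infty\to L^4$ bound with the $L^1\to L^\infty$ bound~\eqref{e:L1-Linfty}.

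\emph{Step 1: covering.} Choose a maximal $h^{\frac12}$-separated set $x_1,\dots,x_L\in X$. The intervals $J_\ell:=[x_\ell-h^{\frac12},x_\ell+h^{\frac12}]$ cover $X$, and each point of $\mathbb R$ lies in at most $3$ of them. Since $\indic_X\le\sum_\ell\indic_{J_\ell}$ pointwise, we get
\[
\|\indic_X\mathcal B\indic_Yf\|_{L^4}^4\le\sum_\ell\|\indic_{J_\ell}\mathcal B\indic_Yf\|_{L^4}^4 ,
\]
and Lemma~\ref{l:Linfty-L4-interval} bounds each term.

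\emph{Step 2: from the points $x_\ell$ to an integral over $x$.} Write $r:=h^{\frac12-\epsilon}$. We want to replace the fibre measure $|\{\vec y\in Y^4:(x_\ell,\vec y)\in\mathcal E_{r}\}|$ by an average over $x\in[x_\ell-h^{\frac12}/2,x_\ell+h^{\frac12}/2]\subset X(h^{\frac12})$; these intervals are disjoint.
\begin{itemize}
\item For $|x-x_\ell|\le h^{\frac12}$ we have $|\partial_x\vec\Theta(x,\vec y)-\partial_x\vec\Theta(x_\ell,\vec y)|\le Ch^{\frac12}\le r$. Hence $(x_\ell,\vec y)\in\mathcal E_r$ implies $(x,\vec y)\in\mathcal E_{2r}$.
\item Summing over $\ell$ gives
\[
\sum_\ell|\{\vec y\in Y^4:(x_\ell,\vec y)\in\mathcal E_r\}|\le h^{-\frac12}\,|(X(h^{\frac12})\times Y^4)\cap\mathcal E_{2r}|.
\]
\end{itemize}
Two bookkeeping points remain. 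First, the factor $2$ in $\mathcal E_{2r}$ can be removed by applying Lemma~\ref{l:Linfty-L4-interval} with a slightly smaller $\epsilon$; alternatively, in its proof, one splits at threshold $r/2$, which only costs constants in the nonstationary-phase estimate. Second, the $h^N$ error terms sum over $L\le Ch^{-1/2}$ intervals, which is fine after renaming $N$.

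\emph{Step 3: assembling the $L^4$ bound.} Combining the above,
\[
\|\indic_X\mathcal B\indic_Y\|_{L^\infty\to L^4}^4\le Ch^{-2}\,|(X(h^{\frac12})\times Y^4)\cap\mathcal E_r|+Ch^N .
\]
By the definition~\eqref{e:E-r-2-def}, this equals
\[
Ch^{-2}|X(h^{\frac12})|\,|Y|^3\mathbf M_r(Y)\,\mathbf E_r(X(h^{\frac12})\xrightarrow\Theta Y)+Ch^N .
\]
Taking fourth roots gives~\eqref{e:Linfty-L4}.

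\emph{Step 4: interpolation for~\eqref{e:L2-L2}.} The operator $T=\indic_X\mathcal B\indic_Y$ is bounded $L^1\to L^\infty$ by $Ch^{-\frac12}$, and $T^*$ has the same form, with phase $-\Theta$ and the roles of the variables swapped. I would use the $TT^*$ / Hölder route rather than abstract interpolation, because we need the volume factors to come out exactly as stated.
\begin{itemize}
\item By Hölder on $X$, $\|Tf\|_{L^2}\le|X|^{\frac14}\|Tf\|_{L^4}$. Also $\|\indic_Y f\|_{L^2}\ge$ a suitable normalization on $Y$.
\item A direct Riesz–Thorin interpolation between $L^1\to L^\infty$ (norm $A=Ch^{-1/2}$) and $L^\infty\to L^4$ (norm $B$) gives a bound $L^{p}\to L^{q}$ with $\theta=\tfrac12$ on the $L^4$ side: $1/p=\tfrac12$ and $1/q=\tfrac18$. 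So $\|T\|_{L^2\to L^8}\le A^{1/2}B^{1/2}$.
\item Hölder on $X$ then gives $\|T\|_{L^2\to L^2}\le|X|^{\frac38}A^{\frac12}B^{\frac12}$.
\item Substituting $B$ from~\eqref{e:Linfty-L4} yields
\[
Ch^{-\frac12}|X|^{\frac38}|X(h^{\frac12})|^{\frac18}|Y|^{\frac38}\mathbf M_r(Y)^{\frac18}\mathbf E_r^{\frac18}.
\]
The error term $h^N$ survives up to a power and a renaming of $N$.
\end{itemize}

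The main obstacle is the averaging in Step 2: matching the threshold in $\mathcal E$ and keeping the sum over intervals bounded without extra $h^{-\epsilon}$ losses. Everything else is Hölder and interpolation.
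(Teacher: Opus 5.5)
Your proposal is correct and follows essentially the paper's proof. You cover $X$ by $h^{\frac12}$-intervals, apply Lemma~\ref{l:Linfty-L4-interval}, turn the sum of fibre measures into the volume of $(X(h^{\frac12})\times Y^4)\cap\mathcal E_{h^{\frac12-\epsilon}}$, and combine H\"older on $X$ with interpolation against~\eqref{e:L1-Linfty}; your order (interpolate to $L^2\to L^8$, then H\"older) is equivalent to the paper's (H\"older first, then interpolate). The one real difference is in Step~2: the paper takes $J_\ell$ of radius $\frac12h^{\frac12}$, so $J_\ell\subset[x-h^{\frac12},x+h^{\frac12}]$ for every $x\in J_\ell$, applies Lemma~\ref{l:Linfty-L4-interval} centered at each such $x$ and averages in $x$, which avoids your enlargement $\mathcal E_r\to\mathcal E_{2r}$ and the $\epsilon$-adjustment (your fix works for small $h$, with large $h$ covered by the trivial $Ch^{-\frac12}$ bound).
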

\begin{remark}
Let us come back to the example discussed at the beginning of~\S\ref{s:NUP-discussion}.
Fix $\alpha\in (0,\tfrac12-\epsilon]$ and put $X:=[0,1]\cap (2\pi h^{1-\alpha}\mathbb Z+[0,0.1h])$,
$Y:=[0,1]\cap (h^\alpha\mathbb Z+[0,0.1h])$. Then
$|X|\sim h^\alpha$, $|Y|\sim h^{1-\alpha}$, $|X(h^{\frac12})|\sim 1$,
 and $\mathbf M_{h^{\frac12-\epsilon}}(Y)\sim h$.
Thus the first term on the right-hand side of~\eqref{e:L2-L2} is
$\sim \mathbf E_{h^{\frac12-\epsilon}}(X(h^{\frac12})\xrightarrow\Theta Y)^{\frac18}$.
That is, any improvement over the basic additive energy bound~\eqref{e:E-r-basic}
gives decay of the norm $\|\indic_X\mathcal B\indic_Y\|_{L^2\to L^2}$.
\end{remark}
\begin{proof}
1. Let $x_1,\dots,x_L\in X$ be a maximal $\frac12h^{\frac12}$-separated set
and define
$$
J_\ell:=[x_\ell-\tfrac12h^{\frac12},x_\ell+\tfrac12h^{\frac12}].
$$
Then $X\subset\bigcup_{\ell=1}^L J_\ell$ and thus
$$
\|\indic_X\mathcal B\indic_Y\|_{L^\infty\to L^4}^4
\leq \sum_{\ell=1}^L\|\indic_{J_\ell}\mathcal B\indic_Y\|_{L^\infty\to L^4}^4.
$$
For each $x\in J_\ell\cap [0,1]$, we have $J_\ell\subset [x-h^{\frac12},x+h^{\frac12}]$ and thus
by Lemma~\ref{l:Linfty-L4-interval}
$$
\|\indic_{J_\ell} \mathcal B\indic_Y\|_{L^\infty\to L^4}^4\leq
Ch^{-\frac32}\big|\{\vec y\in Y^4\colon (x,\vec y)\in \mathcal E_{h^{\frac12-\epsilon}}\}\big|
+Ch^{4N+1}.
$$
Integrating over $x\in J_\ell\cap [0,1]$, we get
$$
\|\indic_{J_\ell} \mathcal B\indic_Y\|_{L^\infty\to L^4}^4\leq
Ch^{-2}\big|
(J_\ell\times Y^4)\cap \mathcal E_{h^{\frac12-\epsilon}}
\big|+Ch^{4N+1}.
$$
Summing over $\ell$, and using that each point lies in at most 3 intervals $J_\ell$ and
$\bigcup_{\ell=1}^L J_\ell\subset X(h^{\frac12})$,
we then have
$$
\|\indic_X\mathcal B\indic_Y\|_{L^\infty\to L^4}^4
\leq
Ch^{-2}\big|
(X(h^{\frac12})\times Y^4)\cap \mathcal E_{h^{\frac12-\epsilon}}
\big|
+Ch^{4N}.
$$
Recalling the definition~\eqref{e:E-r-2-def}, we see that this implies~\eqref{e:Linfty-L4}.

\noindent 2. By H\"older's inequality, we have the bounds
\begin{align}
  \label{e:Linfty-L2}
\|\indic_X\mathcal B\indic_Y\|_{L^\infty\to L^2}
&\leq
|X|^{\frac14}\,\|\indic_X\mathcal B\indic_Y\|_{L^\infty\to L^4},
\\
  \label{e:L1-L2}
\|\indic_X\mathcal B\indic_Y\|_{L^1\to L^2}
&\leq
|X|^{\frac12}\,\|\indic_X\mathcal B\indic_Y\|_{L^1\to L^\infty}.
\end{align}
Interpolating between these gives
$$
\|\indic_X\mathcal B\indic_Y\|_{L^2\to L^2}
\leq
|X|^{\frac38}\,
\|\indic_X\mathcal B\indic_Y\|_{L^\infty\to L^4}^{\frac12}\,
\|\indic_X\mathcal B\indic_Y\|_{L^1\to L^\infty}^{\frac12}.
$$
Together with~\eqref{e:Linfty-L4} and~\eqref{e:L1-Linfty} this gives~\eqref{e:L2-L2}.
\end{proof}

\subsubsection{Proof of the reduction}
\label{s:NUP-reduction-proof}

We are now ready to show that Proposition~\ref{l:NUP-additive} implies Theorem~\ref{t:NUP}.
We first give a proof of a weaker version of this theorem,
which assumes uniformity of $X,Y$:
\begin{proposition}[NUP, uniform case]
  \label{l:NUP-reduced-2}
For each $\zeta\in(0,0.1]$ there exists $\beta\in(0,\zeta/2]$ such that for all $\epsilon\in (0,0.1\beta]$
and $h\in (0,h_0]$ the following holds.
Assume that:
\begin{itemize}
\item
$\mathcal B$ is an operator of the form~\eqref{e:B-NUP-def},
satisfying the assumptions~\eqref{e:NUPred}--\eqref{e:Theta-c-nlin};
\item
$C_0>0$, $\mathfrak m_Y>0$, and $X,Y\subset [0.1,0.9]$ are two sets satisfying the conditions~\eqref{e:NUP-XY-1}--\eqref{e:NUP-XY-vol};
\item
$X,Y$ are $r_1(h)$-uniform on scales $h$ to~1 in the sense
of Definition~\ref{d:set-uniformity}, where $r_1(h)$ is given in Lemma~\ref{l:uniform-set-decomposition}.
\end{itemize}
Then we have the following stronger version of the estimate~\eqref{e:NUP}:
\begin{equation}
  \label{e:NUP-reduced-2}
\|\indic_X\mathcal B\indic_Y\|_{L^2\to L^2}\leq Ch^{2\epsilon}.
\end{equation}
Here the constants $C,h_0$ depend only on $C_0,c_{\nlin}$, and some $(\zeta,\epsilon)$-dependent
$C^\infty$-seminorms of $\Theta,\chi_{\mathcal B}$.
\end{proposition}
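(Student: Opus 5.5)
The plan is to choose $\beta$ as given by Proposition~\ref{l:NUP-additive} (possibly shrunk) and to split into two cases. The split depends on whether $X$ satisfies the non-concentration condition~\eqref{e:NUPa-X-noncon}. If it does, we verify the remaining hypotheses of Proposition~\ref{l:NUP-additive} and feed the energy bound into the $L^2$ estimate~\eqref{e:L2-L2}. If it does not, we use the refined volume bound of Lemma~\ref{l:L2-intermediate}. Throughout, the uniformity constant satisfies $r_1(h)^{-1}=\mathcal O(h^{-\varepsilon'})$ for any $\varepsilon'>0$, so uniformity losses cost only $h^{-o(1)}$ and can be absorbed into small multiples of $\epsilon$.

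\emph{Case 1: \eqref{e:NUPa-X-noncon} fails.} Then there is $r\in[h^{\frac12},1]$ with $\mathbf M_r(X(h^{\frac12}))>h^{-2\beta}r^\zeta|X(h^{\frac12})|$. By Lemma~\ref{l:uniformity-volume-more} (with $\tilde r=h^{\frac12}$), $\mathbf M_r(X)/|X|\gtrsim r_1\,\mathbf M_r(X(h^{\frac12}))/|X(h^{\frac12})|\geq r_1 h^{-2\beta}r^\zeta$. We then apply Lemma~\ref{l:L2-intermediate} at scale $r$, which gives $\|\indic_X\mathcal B\indic_Y\|^2\lesssim h^{-1}\mathbf M_r(X)|Y(h/r)|$. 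Here $h/r\in[h,h^{\frac12}]$, and by~\eqref{e:volume-nbhd}, \eqref{e:uniformity-volume} we have $|Y(h/r)|\lesssim (h/r)|Y|_{h/r}\lesssim r_1^{-1}(h/r)|Y|/\mathbf M_{h/r}(Y)$. Next we need a lower bound for $\mathbf M_{h/r}(Y)$; this should be the dual use of~\eqref{e:NUP-XY-3}. Rather than bounding $\mathbf M_r(X)$ by $|X|$, I would use the upper bound $\mathbf M_r(X)\leq r$ together with the inequality just obtained in the other direction. Combining everything with~\eqref{e:NUP-XY-vol} ($|X||Y|\leq C_0h^{1-\epsilon}$) is meant to produce a gain of $h^{2\beta}$ against the $r^\zeta$, $r^{1-\zeta}h^{\zeta-\beta}$ factors, and hence $h^{\beta-\epsilon-o(1)}\leq h^{4\epsilon}$ after squaring, since $\epsilon\leq0.1\beta$. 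The exact bookkeeping is what fixes how $\beta$ relates to $\zeta$. I expect this exponent-matching step to be the main obstacle, and I would try it first with $\mathbf M_{h/r}(Y)\geq h^{o(1)}|Y|/|Y|_{h/r}$ and~\eqref{e:NUP-XY-3} applied at scale $h/r$.

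\emph{Case 2: \eqref{e:NUPa-X-noncon} holds.} We check the other hypotheses of Proposition~\ref{l:NUP-additive}.
\begin{itemize}
\item \eqref{e:NUPa-Y-uni} is~\eqref{e:uniformity-volume} at $\tilde r=h^{\frac12}$ with $r_1^{-1}\leq h^{-\epsilon/2}$.
\item \eqref{e:NUPa-Y-noncon1} follows from~\eqref{e:NUP-XY-1} via Lemma~\ref{l:uniformity-volume-more}, provided $\mathfrak m_Y\lesssim h^{-o(1)}|Y|$.
\item \eqref{e:NUPa-Y-noncon2} follows from~\eqref{e:NUP-XY-2} in the same way.
\end{itemize}
The proviso on $\mathfrak m_Y$ is the reason to first reduce to $|Y|\geq h^{\epsilon}\mathfrak m_Y$ (cf.~\eqref{e:reverse-volume-XY}). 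If instead $|Y|<h^{\epsilon}\mathfrak m_Y$, the volume bound~\eqref{e:uncertainty-volume-2} with~\eqref{e:NUP-XY-vol} already gives $\|\indic_X\mathcal B\indic_Y\|\lesssim h^{-\frac12}(|X|h^{\epsilon}\mathfrak m_Y)^{1/2}\lesssim h^{0}$. To make this small enough, the threshold should be $h^{5\epsilon}$, adjusting constants like $27\epsilon$ vs.\ $33\epsilon$ accordingly.

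Having checked the hypotheses, Proposition~\ref{l:NUP-additive} gives $\mathbf E\leq h^{26\epsilon}$. We insert this into~\eqref{e:L2-L2}. We bound $|X(h^{\frac12})|\lesssim r_1^{-1}h^{\frac12}|X|/\mathbf M_{h^{\frac12}}(X)$ using uniformity, and $\mathbf M_{h^{\frac12-\epsilon}}(Y)\leq h^{-\epsilon}\mathbf M_{h^{\frac12}}(Y)\lesssim h^{-\epsilon}h^{-o(1)}|Y|/|Y|_{h^{\frac12}}$. For the remaining quantities, the trivial $\mathbf M_{h^{1/2}}(X)$ paired with $|Y|_{h^{1/2}}$ would be controlled by a duality use of Lemma~\ref{l:L2-intermediate} at $r=h^{\frac12}$: if $h^{-1}\mathbf M_{h^{1/2}}(X)|Y(h^{1/2})|$ is small we are done directly, and otherwise it provides the needed lower bound. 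The upshot is that the first term of~\eqref{e:L2-L2} is at most $h^{-\frac12}(|X||Y|)^{\frac12}h^{-O(\epsilon)}h^{26\epsilon/8}$, which by~\eqref{e:NUP-XY-vol} is $\lesssim h^{2\epsilon}$ once the $O(\epsilon)$ losses are tallied below $1.25\epsilon$.
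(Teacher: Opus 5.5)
Your Case~2 is sound and is essentially the paper's route. It consists of the reduction to $|Y|\geq h^{5\epsilon}\mathfrak m_Y$, the checks of \eqref{e:NUPa-Y-uni}--\eqref{e:NUPa-Y-noncon2}, and the dichotomy via Lemma~\ref{l:L2-intermediate} at $r=h^{\frac12}$, which plays the role of \eqref{e:lower-spread-bound}--\eqref{e:upper-spread-bound-2} and leaves a first term of size $h^{17\epsilon/8-o(1)}$ in \eqref{e:L2-L2}.

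\textbf{The gap is in Case~1.} There you apply Lemma~\ref{l:L2-intermediate} as $\|\indic_X\mathcal B\indic_Y\|^2\lesssim h^{-1}\mathbf M_r(X)\,|Y(h/r)|$. This bound grows with $\mathbf M_r(X)$, but the only information Case~1 gives you is a \emph{lower} bound on $\mathbf M_r(X)/|X|$. The lower bound on $\mathbf M_{h/r}(Y)$ you hope to extract from \eqref{e:NUP-XY-3} does not exist, since \eqref{e:NUP-XY-3} is an upper bound. If you use $\mathbf M_r(X)\leq r$ instead, you are left with $r_1^{-1}|Y|/\mathbf M_{h/r}(Y)\geq r_1^{-1}$, which gives no decay.

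This is not a matter of exponent bookkeeping. Take $X$ to be a union of $h$-intervals with spacing $h^{3/4}$ filling a single $h^{1/4}$-interval inside $[0.1,0.9]$, and take $Y:=[0.1,0.9]\cap(h^{1/4}\mathbb Z+[0,h])$, with $\mathfrak m_Y:=|Y|$. Then all hypotheses hold (e.g.\ for $\epsilon\leq\beta/33$), and \eqref{e:NUPa-X-noncon} fails at $r=h^{1/4}$ when $\beta<\zeta/8$. Yet $h^{-1}\mathbf M_r(X)\,|Y(h/r)|\sim 1$ at that scale.

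\textbf{The fix.} The missing idea is to apply Lemma~\ref{l:L2-intermediate} with the roles of $X$ and $Y$ exchanged. We have $\|\indic_X\mathcal B\indic_Y\|=\|\indic_Y\mathcal B^T\indic_X\|$, and the transpose $\mathcal B^T$ has the same form, with phase $(y,x)\mapsto\Theta(x,y)$ and amplitude $(y,x)\mapsto\chi_{\mathcal B}(x,y)$. So the lemma at scale $h/r\in[h,h^{\frac12}]$ gives $\|\indic_X\mathcal B\indic_Y\|^2\leq Ch^{-1}\mathbf M_{h/r}(Y)\,|X(r)|$. This pairs exactly the quantities you control:
\begin{itemize}
\item \eqref{e:NUP-XY-3} gives $\mathbf M_{h/r}(Y)\leq C_0(h/r)^{1-\zeta}h^{\zeta-\beta}$;
\item the Case~1 inequality, via Lemma~\ref{l:uniformity-volume-more}, \eqref{e:uniformity-volume} and \eqref{e:volume-nbhd}, gives $|X(r)|\leq 3r|X|_r\leq 6r_1^{-1}r|X|/\mathbf M_r(X)\leq 288\,r_1^{-2}h^{2\beta}r^{1-\zeta}$.
\end{itemize}
Multiplying, the powers of $r$ cancel and $\|\indic_X\mathcal B\indic_Y\|^2\leq Cr_1^{-2}h^{\beta}\leq Ch^{4\epsilon}$, since $\epsilon\leq 0.1\beta$. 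This step uses neither \eqref{e:NUP-XY-vol} nor \eqref{e:reverse-volume-XY}. It also imposes no relation between $\beta$ and $\zeta$ beyond what Proposition~\ref{l:NUP-additive} already requires.

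With this fix, your argument is the paper's proof read contrapositively. The paper first disposes of the case $\mathbf M_{h/r}(Y)|X|_r<h^{4\epsilon}h/r$ by this same transposed lemma. It then derives \eqref{e:NUPa-X-noncon} from the second inequality in \eqref{e:lower-spread-bound}, together with \eqref{e:uniformity-volume-used} and \eqref{e:NUP-XY-3}.
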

\begin{proof}
1. We may assume that 
\begin{equation}
  \label{e:reverse-volume-XY}
|Y|\geq h^{5\epsilon} \mathfrak m_Y.
\end{equation}
Indeed, assume that instead $|Y|< h^{5\epsilon}\mathfrak m_Y$. Then 
we have by~\eqref{e:uncertainty-volume-2} and~\eqref{e:NUP-XY-vol}
$$
\|\indic_X\mathcal B\indic_Y\|_{L^2\to L^2}\leq Ch^{-\frac12}\sqrt{|X|\cdot |Y|}
\leq Ch^{2\epsilon}
$$
From the $r_1(h)$-uniformity of $X,Y$, \eqref{e:uniformity-volume},
and~\eqref{e:usd-Nr} we have for $h$ small enough and all $r\in [h,1]$
\begin{equation}
  \label{e:uniformity-volume-used}
|X|\geq h^{\epsilon/10}|X|_r\,\mathbf M_r(X),\quad
|Y|\geq h^{\epsilon/10}|Y|_r\,\mathbf M_r(Y).
\end{equation}
We may also assume that for all $r\in [h,1]$
\begin{equation}
  \label{e:lower-spread-bound}
\mathbf M_r(X)|Y|_{h/r}\geq h^{4\epsilon}r,\quad
\mathbf M_r(Y)|X|_{h/r}\geq h^{4\epsilon}r.
\end{equation}
Indeed, assume that instead $\mathbf M_r(X)|Y|_{h/r}<h^{4\epsilon}r$ for some $r\in [h,1]$.
Then by Lemma~\ref{l:L2-intermediate} and~\eqref{e:volume-nbhd} we have
$$
\begin{aligned}
\|\indic_X\mathcal B\indic_Y\|_{L^2\to L^2}
&\leq Ch^{-\frac12}\sqrt{\mathbf M_r(x)|Y(h/r)|}
\\
&\leq Cr^{-\frac12}\sqrt{\mathbf M_r(x)|Y|_{h/r}}
\leq Ch^{2\epsilon}
\end{aligned}
$$
which gives~\eqref{e:NUP-reduced-2}. We argue similarly if $\mathbf M_r(Y)|X|_{h/r}<h^{4\epsilon}r$.

Combining~\eqref{e:uniformity-volume-used} and~\eqref{e:NUP-XY-vol},
we get for all $r\in [h,1]$
$$
\mathbf M_r(X)|Y|_{h/r}\mathbf M_{h/r}(Y)|X|_r\leq h^{-\epsilon}|X|\cdot |Y|
\leq Ch^{1-2\epsilon}.
$$
Together with~\eqref{e:lower-spread-bound} (with $r$ replaced by~$h/r$) this gives a converse bound to it:
\begin{equation}
  \label{e:upper-spread-bound}
\mathbf M_r(X)|Y|_{h/r}\leq Ch^{-6\epsilon}r,\quad
\mathbf M_r(Y)|X|_{h/r}\leq Ch^{-6\epsilon}r\quad\text{for all }r\in [h,1].
\end{equation}
Together with~\eqref{e:volume-nbhd} this gives
\begin{equation}
  \label{e:upper-spread-bound-2}
\mathbf M_r(X)|Y(h/r)|\leq Ch^{1-6\epsilon},\quad
\mathbf M_r(Y)|X(h/r)|\leq Ch^{1-6\epsilon}\quad\text{for all }r\in [h,1].
\end{equation}
Now Lemma~\ref{l:Linfty-L4} with $N:=1$ gives
\begin{equation}
  \label{e:NUPred2-int-1}
\begin{aligned}
\|\indic_X\mathcal B\indic_Y\|_{L^2\to L^2}\leq &\, Ch^{-\frac12}(|X|\cdot|Y|)^{\frac38}
(\mathbf M_{h^{\frac12-\epsilon}}(Y)|X(h^{\frac12})|)^{\frac18}\cdot
\\
&\cdot \mathbf E_{h^{\frac12-\epsilon}}(X(h^{\frac12})\xrightarrow\Theta Y)^{\frac18}
+Ch
\\
\leq &\, Ch^{-\frac54\epsilon}\,\mathbf E_{h^{\frac12-\epsilon}}(X(h^{\frac12})\xrightarrow\Theta Y)^{\frac18}
+Ch
\end{aligned}
\end{equation}
where we bound $|X|\cdot|Y|$ by~\eqref{e:NUP-XY-vol} and
and $\mathbf M_{h^{\frac12-\epsilon}}(Y)|X(h^{\frac12})|$
by~\eqref{e:upper-spread-bound-2} (with $r:=h^{\frac12}$) and~\eqref{e:M-r-monotone}.

\noindent 2. We now verify that the sets $X,Y$ satisfy the conditions~\eqref{e:NUPa-Y-uni}--\eqref{e:NUPa-X-noncon} of Proposition~\ref{l:NUP-additive} for $h$ small enough:
\begin{itemize}
\item[\underline{\eqref{e:NUPa-Y-uni}:}] this follows from~\eqref{e:uniformity-volume-used} with $r:=h^{\frac12}$.
\smallskip

\item[\underline{\eqref{e:NUPa-Y-noncon1}:}] we have
$$
{\mathbf M_{|Y(h^{\frac12})|}(Y(h^{\frac12}))\over |Y(h^{\frac12})|}
\leq  {h^{-\epsilon/2}\,\mathbf M_{|Y(h^{\frac12})|}(Y)\over |Y|}
\leq  {h^{\zeta-\epsilon}\mathfrak m_Y\over |Y|}
\leq h^{\zeta-6\epsilon}\leq h^{\zeta/2}.
$$
Here in the first inequality we use the uniformity of~$Y$, \eqref{e:usd-Nr}, and Lemma~\ref{l:uniformity-volume-more}
with $\tilde r:=h^{\frac12}$, $r:=|Y(h^{\frac12})|$.
In the second inequality we use~\eqref{e:NUP-XY-1}.
In the third inequality we use~\eqref{e:reverse-volume-XY}.

\item[\underline{\eqref{e:NUPa-Y-noncon2}:}] we have
$$
\mathbf M_{h^\beta}(Y)\leq C_0h^{33\epsilon}\mathfrak m_Y\leq h^{27\epsilon}|Y|.
$$
Here in the first inequality we use~\eqref{e:NUP-XY-2}
and in the second inequality we use~\eqref{e:reverse-volume-XY}.
\item[\underline{\eqref{e:NUPa-X-noncon}:}] let $r\in [h^{\frac12},1]$. We estimate
$$
{\mathbf M_r(X(h^{\frac12}))\over |X(h^{\frac12})|}
\leq {h^{-\epsilon/2}\,\mathbf M_r(X)\over |X|}
\leq {h^{-\epsilon}\over |X|_r}
\leq h^{-1-5\epsilon}r\mathbf M_{h/r}(Y)
\leq h^{-\beta-6\epsilon}r^\zeta
\leq h^{-2\beta}r^\zeta.
$$
Here in the first inequality we use the uniformity of~$X$, \eqref{e:usd-Nr}, and Lemma~\ref{l:uniformity-volume-more}
with $\tilde r:=h^{\frac12}$. In the second inequality we use~\eqref{e:uniformity-volume-used}.
In the third inequality we use~\eqref{e:lower-spread-bound}.
In the fourth inequality we use~\eqref{e:NUP-XY-3} with $r$ replaced by $h/r$.
\end{itemize}
Now Proposition~\ref{l:NUP-additive} applies and gives the estimate~\eqref{e:NUP-additive-conc}:
$$
\mathbf E_{h^{\frac12-\epsilon}}(X(h^{\frac12})\xrightarrow\Theta Y)\leq h^{26\epsilon}
$$
which together with~\eqref{e:NUPred2-int-1} gives the needed bound~\eqref{e:NUP-reduced-2}.
\end{proof}
We finally reduce the general case to the uniform case using the decomposition
of Lemma~\ref{l:uniform-set-decomposition}:
\begin{proof}[Proof of Theorem~\ref{t:NUP}]
As explained in~\S\ref{s:NUP-basic-reductions}, we may assume that the conditions~\eqref{e:NUPred}--\eqref{e:XY-contained} hold.
Using Lemma~\ref{l:uniform-set-decomposition}, we write
$$
\begin{gathered}
X = X_{\uni}\sqcup X_{\res},\quad
Y = Y_{\uni}\sqcup Y_{\res},\quad
X_{\uni} = \bigsqcup_{j=1}^{N_X} X_j,\quad 
Y_{\uni} = \bigsqcup_{k=1}^{N_Y} Y_k,
\\
N_X,N_Y\leq N_1(h),\quad
|X_{\res}|\leq h|X|,\quad
|Y_{\res}|\leq h|Y|,
\end{gathered}
$$
where the sets $X_j,Y_k$ are $r_1(h)$-uniform on scales~$h$ to~1.

By the triangle inequality, we have
$$
\begin{aligned}
\|\indic_X\mathcal B\indic_Y\|_{L^2\to L^2}
&\leq \|\indic_{X_{\uni}}\mathcal B\indic_{Y_{\uni}}\|_{L^2\to L^2}
+\|\indic_{X_{\res}}\mathcal B\indic_Y\|_{L^2\to L^2}
+\|\indic_X\mathcal B\indic_{Y_{\res}}\|_{L^2\to L^2}
\\
&\leq N_1(h)^2\max_{j,k}\|\indic_{X_j}\mathcal B\indic_{Y_k}\|_{L^2\to L^2}
\\
&\quad +\|\indic_{X_{\res}}\mathcal B\indic_Y\|_{L^2\to L^2}
+\|\indic_X\mathcal B\indic_{Y_{\res}}\|_{L^2\to L^2}.
\end{aligned}
$$
We have by~\eqref{e:uncertainty-volume-2} and~\eqref{e:NUP-XY-vol}
$$
\|\indic_{X_{\res}}\mathcal B\indic_Y\|_{L^2\to L^2}
\leq Ch^{-\frac12}\sqrt{|X_{\res}|\cdot |Y|}\leq Ch^{1-\epsilon\over 2}
\leq Ch^{\epsilon}
$$
and $\|\indic_X\mathcal B\indic_{Y_{\res}}\|_{L^2\to L^2}$ is estimated similarly.

By Proposition~\ref{l:NUP-reduced-2} applied to the sets $X_j,Y_k$ (which are subsets of $X,Y$ and thus still satisfy the conditions~\eqref{e:NUP-XY-1}--\eqref{e:NUP-XY-vol} as explained in a remark following Theorem~\ref{t:NUP}), we have
$$
\max_{j,k}\|\indic_{X_j}\mathcal B\indic_{Y_k}\|_{L^2\to L^2}\leq Ch^{2\epsilon}.
$$
Since $N_1(h)=\mathcal O(h^{-\epsilon/2})$ by~\eqref{e:usd-Nr}, we now get $\|\indic_X\mathcal B\indic_Y\|_{L^2\to L^2}\leq Ch^\epsilon$ as needed.
\end{proof}

\subsection{Additive energy bound via projection theory}
\label{s:NUP-additive-proof}

In this section we prove Proposition~\ref{l:NUP-additive}, which finishes
the proof of Theorem~\ref{t:NUP}.

\subsubsection{A projection theorem}

Our proof relies on a result in projection theory,
Theorem~\ref{t:shmerkin-projection} below.
Before stating it, we need to introduce some notation. Assume that
$G:[0,1]^3\to \mathbb R$ is a $C^\infty$ function (which we call a \emph{projection function}).
We denote the argument of $G$ by $(x,\tilde y)$ where $x\in [0,1]$ and
$\tilde y=(y_1,y_2)\in [0,1]^2$, and make the assumption that
$\partial_{\tilde y}G(x,\tilde y)\neq 0$ on $[0,1]^3$.
Define the polar angle of the derivative $\partial_{\tilde y}G$ as follows:
$$
\theta(x,\tilde y):={\partial_{\tilde y}G(x,\tilde y)\over |\partial_{\tilde y}G(x,\tilde y)|}
\ \in\ \mathbb S^1.
$$
The projections we study are the maps $\tilde y\mapsto G(x,\tilde y)$, with $x$
treated as a parameter. The angle $\theta(x,\tilde y)$ determines the tangent
space of the fiber $\{\tilde y'\colon G(x,\tilde y')=G(x,\tilde y)\}$
at the point $\tilde y$. Here are a couple of examples of projection functions:
\begin{align}
  \label{e:projection-example-1}
G(x,\tilde y)&=y_1,\quad \theta(x,\tilde y)=(1,0);
\\
  \label{e:projection-example-2}
G(x,\tilde y)&=y_1+xy_2,\quad \theta(x,\tilde y)=(1+x^2)^{-\frac12}(1,x).
\end{align}
Note that the first example does not depend on $x$ while in the second example,
$\theta(x,\tilde y)$ varies with~$x$.

We will use the $d=2$ case of~\cite[Theorem~1.7]{Shmerkin-nonlinear};
the statement below is as written down in~\cite[Theorem~2.1]{Raz-Zahl},
with some changes in notation.
\begin{theorem}
  \label{t:shmerkin-projection}
Let $\zeta',C>0$. Then there exist $\beta'=\beta'(\zeta')>0$
and $\delta_0=\delta_0(\zeta',C)>0$ such that for each $\delta\in (0,\delta_0)$, the following holds.
Let $\widetilde Y_1\subset [0,1]^2$ be a union of $\delta$-cubes and $X_1\subset [0,1]$
be a union of $\delta$-intervals. Assume that:
\begin{itemize}
\item 
The projection function $G:[0,1]^3\to\mathbb R$ satisfies
\begin{equation}
  \label{e:shmerpro-1}
\sup_{x\in X_1}\|G(x,\bullet)\|_{C^2([0,1]^2)}<C,\quad
\inf_{(x,\tilde y)\in  X_1\times \widetilde Y_1}|\partial_{\tilde y}G(x,\tilde y)|>C^{-1}.
\end{equation}
\item The set $\widetilde Y_1$ satisfies a nonconcentration estimate
for every $|\widetilde Y_1|^{\frac12}$-cube $Q\subset [0,1]^2$:
\begin{equation}
  \label{e:shmerpro-2}
|\widetilde Y_1\cap Q|\leq \delta^{\zeta'}|\widetilde Y_1|.
\end{equation}
\item For each $\tilde y\in\widetilde Y_1$ and each arc $J_1\subset \mathbb S^1$
of length $|J_1|\geq \delta$, we have the transversality estimate
\begin{equation}
  \label{e:shmerpro-3}
\big|\{x\in X_1\colon \theta(x,\tilde y)\in J_1\}\big|\leq \delta^{-\beta'}|J_1|^{\zeta'}|X_1|.
\end{equation}
\end{itemize}
Then there exists a set $X_{\bad}\subset X_1$ with $|X_{\bad}|\leq \delta^{\beta'}|X_1|$ such that
for all $x\in X_1\setminus X_{\bad}$ and each set $\widetilde Y'\subset \widetilde Y_1$
with $|\widetilde Y'|\geq \delta^{\beta'}|\widetilde Y_1|$ we have
\begin{equation}
  \label{e:shmerpro-4}
|G(x,\widetilde Y')|_\delta\geq \delta^{-\beta'}|\widetilde Y_1|_\delta^{\frac12}.
\end{equation}
\end{theorem}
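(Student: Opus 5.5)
This statement is imported rather than proved from scratch. I would deduce it as the $d=2$ case of \cite[Theorem~1.7]{Shmerkin-nonlinear}, in the form recorded in \cite[Theorem~2.1]{Raz-Zahl}. So the plan is mainly a dictionary check between their notation and ours.

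First I would match the objects.
\begin{itemize}
\item Their family of $C^2$ maps $\tilde y\mapsto G(x,\tilde y)$, indexed by a parameter set, is our projection function with parameter $x\in X_1$. The first bound in~\eqref{e:shmerpro-1} is their uniform $C^2$ control, and the second is their nondegeneracy of the differential.
\item Their $\delta$-discretized sets with a nonconcentration condition at the single scale $|\widetilde Y_1|^{1/2}$ correspond to $\widetilde Y_1$ and condition~\eqref{e:shmerpro-2}. This is a Bourgain-type hypothesis that $\widetilde Y_1$ is not concentrated near a ``half-dimensional'' piece.
\item Their transversality/nonconcentration hypothesis on the directions of the fibers corresponds to~\eqref{e:shmerpro-3}, phrased through the Gauss-type map $x\mapsto\theta(x,\tilde y)$ for each fixed $\tilde y$.
\end{itemize}
I would then check that the conclusion~\eqref{e:shmerpro-4} is exactly their ``projection has $\delta$-covering number $\ge\delta^{-\beta'}|\widetilde Y_1|_\delta^{1/2}$ for all large subsets, outside an exceptional set of parameters''. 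Here one uses that for a union of $\delta$-cubes, $|\widetilde Y_1|_\delta\sim\delta^{-2}|\widetilde Y_1|$ by~\eqref{e:volume-nbhd}, so measures and covering numbers can be exchanged up to constants absorbed into $\delta^{\beta'}$.

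If instead one wanted to reprove it, I would follow Shmerkin's scheme. Start with Bourgain's discretized projection theorem for the linear projections $\tilde y\mapsto\theta\cdot\tilde y$, whose direction set is nonconcentrated. Then linearize $G(x,\cdot)$ on cubes of side $\rho\gg\delta$: on each such cube $G(x,\cdot)$ equals an affine map up to an $\mathcal O(\rho^2)$ error, which is harmless at scale $\delta$ if $\rho\le\delta^{1/2}$. Next run a multiscale entropy decomposition, after first passing to uniform subsets in the spirit of Lemma~\ref{l:uniform-set-split}. On each scale block, apply the linear theorem with directions $\theta(x,\tilde y)$ frozen at the cube center; the nonconcentration of these directions comes from~\eqref{e:shmerpro-3}. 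Finally, sum the entropy gains over scales to get the $\delta^{-\beta'}$ improvement, and discard the parameters $x$ where the linear theorem fails at too many scales, which produces $X_{\bad}$.

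The main obstacle in a self-contained proof is this multiscale bookkeeping. One must ensure that the nonconcentration~\eqref{e:shmerpro-2}, which holds at only one scale, produces a gain at some block of scales that is robust under passing to the arbitrary large subset $\widetilde Y'$. One must also control the exceptional parameter sets uniformly over all the $\rho$-cubes. For the present paper I would simply cite the result, with the notational translation above as the entire proof.
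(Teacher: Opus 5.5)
Your proposal matches the paper: it gives no proof of this theorem and simply quotes it as the $d=2$ case of \cite[Theorem~1.7]{Shmerkin-nonlinear}, in the formulation of \cite[Theorem~2.1]{Raz-Zahl}, up to changes of notation, exactly as you plan to do. Your sketch of how Shmerkin's multiscale argument would go is not needed for this.
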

\begin{remark}
The transversality assumption~\eqref{e:shmerpro-3} is where the nonlinear phase condition~\eqref{e:Theta-c-nlin}
will be used. Note that~\eqref{e:shmerpro-3} is never satisfied for the projection function $G(x,\tilde y)=y_1$
but it is satisfied for the projection function $G(x,\tilde y)=y_1+xy_2$ if $X_1$ is sufficiently spread out
(e.g. $X_1=[0,1]$).
\end{remark}
The following is a version of Theorem~\ref{t:shmerkin-projection}
reformulated in a way which will be more convenient for us to use.
\begin{corollary}
  \label{c:shmercor}
Assume that $\zeta'\in (0,1],C'>0$. Then there exist $\beta''=\beta''(\zeta')\in (0,0.1]$
and $h_0=h_0(\zeta',C')>0$ such that for each $h\in (0,h_0]$, the following holds.
Let $X\subset [0.1,0.9]$ and $\widetilde Y\subset [0.1,0.9]^2$ be nonempty. Assume that:
\begin{itemize}
\item The projection function $G:[0,1]^3\to\mathbb R$ satisfies
\begin{equation}
  \label{e:shmercor-1}
\|G\|_{C^2([0,1]^3)}\leq C';\quad
\partial_{\tilde y}G(x,\tilde y)\in [1,2]^2\quad\text{for all }(x,\tilde y)\in [0,1]^3.
\end{equation}
\item The set $\widetilde Y$ satisfies the nonconcentration estimate
\begin{equation}
  \label{e:shmercor-2}
\mathbf M_{|\widetilde Y(h^{\frac12})|^{\frac12}}(\widetilde Y(h^{\frac12}))
\leq h^{\zeta'} |\widetilde Y(h^{\frac12})|.
\end{equation}
\item For each $\tilde y\in\widetilde Y(h^{\frac12})$ and each interval $J\subset \mathbb R$
of length $|J|\in [h^{\frac12},2]$, we have the transversality estimate
\begin{equation}
  \label{e:shmercor-3}
\big|\{x\in X(h^{\frac12})\colon \log\partial_{y_2}G(x,\tilde y)-\log\partial_{y_1}G(x,\tilde y)\in J\}\big|\leq \tfrac13 h^{-\beta''}|J|^{\zeta'}|X(h^{\frac12})|.
\end{equation}
\end{itemize}
Then there exists a set $X_{\bad}\subset X(h^{\frac12})$ with $|X_{\bad}|\leq h^{\beta''}|X(h^{\frac12})|$
such that for all $x\in X(h^{\frac12})\setminus X_{\bad}$ and each set $\widetilde Y'\subset \widetilde Y(h^{\frac12})$ with $|\widetilde Y'|\geq h^{\beta''}|\widetilde Y(h^{\frac12})|$ we have
\begin{equation}
  \label{e:shmercor-4}
|G(x,\widetilde Y')|_{h^{\frac12}}\geq h^{-\beta''}|\widetilde Y|_{h^{\frac12}}^{\frac12}.
\end{equation}
\end{corollary}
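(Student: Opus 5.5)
We will derive Corollary~\ref{c:shmercor} from Theorem~\ref{t:shmerkin-projection} with $\delta:=h^{\frac12}$. The sets there are $X_1:=X(h^{\frac12})$ and $\widetilde Y_1:=\widetilde Y(h^{\frac12})$. Strictly, one should use the union of the $\delta$-intervals/cubes of a fixed grid meeting $X$, resp.\ $\widetilde Y$; such a grid union is sandwiched between $X(h^{\frac12})$ and $X(2h^{\frac12})$, and all quantities change only by constant factors, which are absorbed by powers of $h$ for $h$ small. Both sets lie in $[0,1]$, resp.\ $[0,1]^2$, since $X,\widetilde Y\subset[0.1,0.9]$ and $h$ is small. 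We will take $\zeta'$ in the theorem equal to (say) half of the given $\zeta'$, and set $\beta'':=\min(\beta'/4,\zeta'/4,0.1)$, where $\beta'$ is the output of the theorem for this smaller exponent. The constant $C$ in the theorem will be a multiple of $C'+1$.

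We check the hypotheses in turn.

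\textbf{Smoothness and nondegeneracy.} Hypothesis~\eqref{e:shmerpro-1} is immediate from~\eqref{e:shmercor-1}, since $|\partial_{\tilde y}G|\geq\sqrt2$.

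\textbf{Nonconcentration.} For~\eqref{e:shmerpro-2}, note that $|\widetilde Y_1|^{1/2}$-cubes are exactly the cubes appearing in~\eqref{e:shmercor-2}. Hence $|\widetilde Y_1\cap Q|\leq h^{\zeta'}|\widetilde Y_1|=\delta^{2\zeta'}|\widetilde Y_1|$, which is better than needed.

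\textbf{Transversality.} This is the main point, and we reduce~\eqref{e:shmerpro-3} to~\eqref{e:shmercor-3}. Since $\partial_{\tilde y}G\in[1,2]^2$, the angle $\theta(x,\tilde y)$ lies in a fixed compact arc inside the open first quadrant. There $\theta$ is determined by the slope $\partial_{y_2}G/\partial_{y_1}G\in[\frac12,2]$, and hence by $\phi:=\log\partial_{y_2}G-\log\partial_{y_1}G\in[-\log 2,\log 2]$. The map $\phi\mapsto\theta$ is a diffeomorphism onto its image with derivative bounded above and below by absolute constants. So an arc $J_1$ of length $\geq\delta$ pulls back to an interval $J$ of length at most $C|J_1|$. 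If $|J|<h^{\frac12}$, we enlarge $J$ to length $h^{\frac12}$, which is still $\leq C|J_1|$. If $|J_1|$ is large, we intersect with $[-\log 2,\log 2]$ so that $|J|\leq 2$. Then~\eqref{e:shmercor-3} gives
\[
|\{x\in X_1:\theta\in J_1\}|\leq \tfrac13 h^{-\beta''}C^{\zeta'}|J_1|^{\zeta'}|X_1|\leq \delta^{-\beta'}|J_1|^{\zeta'}|X_1|,
\]
because $h^{-\beta''}=\delta^{-2\beta''}\leq\delta^{-\beta'}/C$ for small $h$. If we had halved $\zeta'$, we also use $|J_1|^{\zeta'}\leq|J_1|^{\zeta'/2}$, since $|J_1|\leq 2\pi$ up to constants.

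\textbf{Conclusion.} Theorem~\ref{t:shmerkin-projection} now produces $X_{\bad}$ with $|X_{\bad}|\leq\delta^{\beta'}|X_1|\leq h^{\beta''}|X_1|$. Any $\widetilde Y'$ with $|\widetilde Y'|\geq h^{\beta''}|\widetilde Y_1|$ satisfies $|\widetilde Y'|\geq\delta^{\beta'}|\widetilde Y_1|$, since $2\beta''\leq\beta'$. The theorem then gives $|G(x,\widetilde Y')|_{\delta}\geq\delta^{-\beta'}|\widetilde Y_1|_\delta^{1/2}$. Finally, $|\widetilde Y_1|_\delta\gtrsim|\widetilde Y|_{h^{1/2}}$ because $\widetilde Y\subset\widetilde Y_1$, and $\delta^{-\beta'}\geq Ch^{-\beta''}$ absorbs the constants; this gives~\eqref{e:shmercor-4}.

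The only delicate point is the bookkeeping. The discretization must be such that~\eqref{e:shmerpro-2} is stated for the actual grid union and that the sets $\widetilde Y'$ range over subsets of that union; we handle this by taking $\widetilde Y_1$ to be the grid union containing $\widetilde Y(h^{\frac12})$, and the corresponding inclusions only cost constants.
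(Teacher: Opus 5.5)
Your proposal is correct and is essentially the paper's proof. Like the paper, you apply Theorem~\ref{t:shmerkin-projection} with $\delta:=h^{\frac12}$, $X_1:=X(h^{\frac12})$, $\widetilde Y_1:=\widetilde Y(h^{\frac12})$. You read off \eqref{e:shmerpro-2} from \eqref{e:shmercor-2}, since $h^{\zeta'}\le\delta^{\zeta'}$. You obtain \eqref{e:shmerpro-3} from \eqref{e:shmercor-3} via the bi-Lipschitz change of variable $\log\partial_{y_2}G-\log\partial_{y_1}G=\log\tan\theta$. And you take $\beta''$ to be a fixed fraction of $\beta'$; the paper uses $\min(\frac12\beta',0.1)$ with the same $\zeta'$, so halving $\zeta'$ is not needed.

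The grid discretization you mention is unnecessary. In this paper an $r$-cube is any translate of $[0,r]^d$, so $X(h^{\frac12})$ and $\widetilde Y(h^{\frac12})$ are already unions of $h^{\frac12}$-intervals/cubes. It is better dropped, because enlarging $X_1,\widetilde Y_1$ would not merely cost constants: \eqref{e:shmercor-3} says nothing about the added points, so the transversality hypothesis would have to be re-derived there.
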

\begin{proof}
We put
\begin{equation}
  \label{e:NUP-choice-1}
\begin{gathered}
\beta''(\zeta')=\min(\tfrac12\beta'(\zeta'),0.1),\\
C:=\max(2C',1),\quad
h_0(\zeta',C'):=\min(\tfrac12\delta_0(\zeta',C)^2,0.001)
\end{gathered}
\end{equation}
where $\beta'(\zeta')$ and $\delta_0(\zeta',C)$ are given in Theorem~\ref{t:shmerkin-projection}. We apply
Theorem~\ref{t:shmerkin-projection} with
$$
\delta:=h^{\frac12},\quad
X_1:=X(h^{\frac12}),\quad
\widetilde Y_1:=\widetilde Y(h^{\frac12}).
$$
The assumptions~\eqref{e:shmerpro-1}--\eqref{e:shmerpro-2} follow immediately from~\eqref{e:shmercor-1}--\eqref{e:shmercor-2}.

We now verify the assumption~\eqref{e:shmerpro-3}.
For each $(x,\tilde y)\in [0,1]^3$,
the polar angle $\theta(x,\tilde y)$ lies in the cone $\{y_1,y_2> 0,\
\tfrac12\leq {y_1/y_2}\leq 2\}$. We thus identify $\theta(x,\tilde y)$
with a number in $[\arctan(1/2),\arctan 2]$ and note that
$$
\log \partial_{y_2}G-\log\partial_{y_1}G=\log\tan\theta.
$$
Let $J_1\subset \mathbb S^1$ be an arc
of length $|J_1|\geq h^{\frac12}$. We may assume that
$J_1\subset [\arctan(1/2),\arctan 2]$.
If $\theta(x,\tilde y)\in J_1$, then
$$
\log\partial_{y_2}G(x,\tilde y)-\log\partial_{y_1}G(x,\tilde y)\in
J:=\log\tan J_1.
$$
The derivative of the function $\theta\mapsto \log\tan \theta$
is equal to $(\cos\theta\sin\theta)^{-1}$ and thus
takes values in the interval $[2,2.5]$ on $J_1$. Thus $J$ is an interval of length
$h^{\frac12}\leq |J|\leq 3|J_1|$. Now~\eqref{e:shmercor-3} implies~\eqref{e:shmerpro-3}.

We have verified that all the assumptions of Theorem~\ref{t:shmerkin-projection} apply.
Now the estimate~\eqref{e:shmerpro-4} gives~\eqref{e:shmercor-4},
since $|\widetilde Y_1|_{h^{\frac12}}\geq |\widetilde Y|_{h^{\frac12}}$ by~\eqref{e:monotone-set}.
\end{proof}

\subsubsection{Our projection function}
\label{s:our-projection-function}

We write points $\vec y\in [0,1]^4$ as
$$
\vec y=(y_1,y_2,y'_1,y'_2)=(\tilde y,\tilde y'),\quad
\tilde y=(y_1,y_2),\quad
\tilde y'=(y'_1,y'_2).
$$
Let $\Theta$ be a phase function satisfying~\eqref{e:NUPred}--\eqref{e:Theta-c-nlin}.
Define the projection function $G:[0,1]^3\to \mathbb R$ by
\begin{equation}
  \label{e:G-def}
G(x,\tilde y):=\partial_x\Theta(x,y_1)+\partial_x\Theta(x,y_2),
\end{equation}
so that the derivative of the function $\vec\Theta$ from~\eqref{e:Theta-vec-def}
is given by
\begin{equation}
  \label{e:G-prop}
\partial_x\vec\Theta(x,\tilde y,\tilde y')=G(x,\tilde y)-G(x,\tilde y').
\end{equation}
From our assumption~\eqref{e:Theta-mix-12} we see that $G$ satisfies~\eqref{e:shmercor-1}, where the constant
$C'$ depends only on the $C^3$ norm of~$\Theta$.

The following lemma will help us verify the transversality estimate~\eqref{e:shmercor-3}.
It uses the nonlinear phase condition~\eqref{e:Theta-c-nlin}.
See Figure~\ref{f:levelsets}.
\begin{figure}
\includegraphics{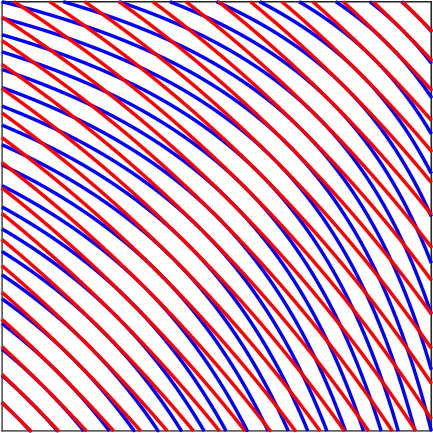}
\caption{An illustration of the use of the nonlinear phase assumption~\eqref{e:NUP-curved}.
Here we take $\Theta(x,y)=\log|x-y|$, $y\in [0,1]$, and plot the
level sets of the function $\tilde y\in [0,1]^2\mapsto G(x,\tilde y)$ defined in~\eqref{e:G-def}
for $x=2$ (in blue) and $x=5$ (in red). Note that the blue and red level sets
are transversal to each other outside of the diagonal $\{y_1=y_2\}$, which
is captured quantitatively in Lemma~\ref{l:transversality-basic}.}
\label{f:levelsets}
\end{figure}
%
\begin{lemma}
\label{l:transversality-basic}
Let $X\subset [0,1]$.
Assume that $\tilde y=(y_1,y_2)\in [0,1]^2$ and $|y_1-y_2|\geq r$ for some $r>0$.
Then for each interval $J\subset \mathbb R$, we have
\begin{equation}
  \label{e:transversality-basic}
\big|\{x\in X\colon \log\partial_{y_2}G(x,\tilde y)-\log\partial_{y_1}G(x,\tilde y)\in J\}\big|\leq \Big\lceil {1\over c_{\nlin}r}\Big\rceil\mathbf M_{|J|}(X).
\end{equation}
\end{lemma}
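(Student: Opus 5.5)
Fix $\tilde y=(y_1,y_2)$ with $|y_1-y_2|\geq r$ and consider the function
$$
\Psi(x):=\log\partial_{y_2}G(x,\tilde y)-\log\partial_{y_1}G(x,\tilde y)=\log\partial_x\partial_y\Theta(x,y_2)-\log\partial_x\partial_y\Theta(x,y_1),
$$
where we used~\eqref{e:G-def}, so that $\partial_{y_j}G(x,\tilde y)=\partial_x\partial_y\Theta(x,y_j)$; these are positive by~\eqref{e:Theta-mix-12}, so the logarithms make sense. The plan is to show that $\Psi$ is strictly monotone with derivative bounded below by $c_{\nlin}r$. Then the preimage $\Psi^{-1}(J)$ is an interval of length at most $|J|/(c_{\nlin}r)$, and we can count how many $|J|$-intervals are needed to cover it.

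The key step is the derivative computation:
$$
\Psi'(x)=\int_{y_1}^{y_2}\partial_x\partial_y\log\partial_x\partial_y\Theta(x,y)\,dy.
$$
By~\eqref{e:Theta-c-nlin}, the integrand has absolute value at least $c_{\nlin}$. Since it is continuous on $[0,1]^2$ and never vanishes, it has a constant sign. Hence $|\Psi'(x)|\geq c_{\nlin}|y_2-y_1|\geq c_{\nlin}r$ for all $x\in[0,1]$, and the sign of $\Psi'$ is the same for all $x$. So $\Psi$ is strictly monotone on $[0,1]$.

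Consequently $I:=\{x\in[0,1]\colon\Psi(x)\in J\}$ is an interval, and by the mean value theorem $|I|\leq |J|/(c_{\nlin}r)$. We cover $I$ by $\lceil |I|/|J|\rceil\leq\lceil 1/(c_{\nlin}r)\rceil$ intervals of length $|J|$. Each such interval meets $X$ in a set of measure at most $\mathbf M_{|J|}(X)$, which gives~\eqref{e:transversality-basic}. If $J$ is unbounded or $|J|=0$, the bound is either trivial or immediate, since then $I$ has measure zero.

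No step is a serious obstacle. The only points needing care are the constant-sign argument, which uses continuity of the integrand from~\eqref{e:NUPred}, and checking that the ceiling bound accounts for the endpoints of the covering.
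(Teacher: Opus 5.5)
Your proof is correct and follows essentially the same route as the paper's: the derivative identity $\Psi'(x)=\int_{y_1}^{y_2}\partial_x\partial_y\log\partial_x\partial_y\Theta(x,y)\,dy$, the bound $|\Psi'|\geq c_{\nlin}r$, and the resulting estimate that $\Psi^{-1}(J)$ is an interval of length at most $|J|/(c_{\nlin}r)$. Your explicit covering by $\lceil 1/(c_{\nlin}r)\rceil$ intervals of length $|J|$ is exactly what the paper obtains by citing~\eqref{e:M-r-monotone}, and your constant-sign remark spells out the continuity argument that the paper's bound on $|\Psi'|$ uses implicitly.
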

\begin{proof}
Define the function
$$
V(x):=\log\partial_{y_2}G(x,\tilde y)-\log\partial_{y_1}G(x,\tilde y),\quad
x\in [0,1].
$$
Then
$$
\begin{aligned}
V'(x)
&=\partial_x\log\partial_x\partial_y\Theta(x,y_2)
-\partial_x\log\partial_x\partial_y\Theta(x,y_1)
\\
&=\int_{y_1}^{y_2}\partial_x\partial_y\log\partial_x\partial_y\Theta(x,y)\,dy.
\end{aligned}
$$
By~\eqref{e:Theta-c-nlin}, we see that
$$
|V'(x)|\geq c_{\nlin}|y_1-y_2|\geq c_{\nlin}r\quad\text{for all }x\in [0,1].
$$
Therefore, the set $V^{-1}(J)$ is an interval of length
$\leq |J|/(c_{\nlin}r)$. It follows that
$$
|X\cap V^{-1}(J)|\leq \mathbf M_{|J|/(c_{\nlin}r)}(X)
$$
which together with~\eqref{e:M-r-monotone} gives~\eqref{e:transversality-basic}.
\end{proof}

\subsubsection{From large additive energy to small projection}

The next lemma shows that if the additive quantity in~\eqref{e:NUP-additive-conc} is large, then
the set $Y(h^{\frac12})^2$ has a large subset with a small projection,
for many values of~$x$.
\begin{lemma}
  \label{l:energy-reduced}
Let $\alpha_1,\tau_1,\epsilon\in (0,0.1]$. Then for all $h$ small enough depending
on $\alpha_1,\tau_1,\epsilon$ the following holds.
Let $G$ be given by~\eqref{e:G-def}
and $X,Y\subset[0.1,0.9]$ satisfy
\begin{equation}
  \label{e:energy-reduced-1}
\mathbf E_{h^{\frac12-\epsilon}}(X(h^{\frac12})\xrightarrow\Theta Y)\geq h^{\alpha_1}.
\end{equation}
Assume also that $Y$ satisfies a weak uniformity condition at scale $h^{\frac12}$
and a nonconcentration condition at scale $h^{\tau_1}$:
\begin{align}
  \label{e:Y-energy-uniform}
|Y|_{h^{\frac12}}\mathbf M_{h^{\frac12}}(Y)
&\leq h^{-\epsilon/2}|Y|,
\\
  \label{e:Y-energy-noncon}
\mathbf M_{h^{\tau_1}}(Y)
&\leq \tfrac1{32}h^{\alpha_1}|Y|.
\end{align}
Then we can choose for each $x\in X(h^{\frac12})$ a set
\begin{equation}
  \label{e:energy-reduced-Y}
\widetilde Y_x\subset \{(y_1,y_2)\in Y(h^{\frac12})^2\colon |y_1-y_2|\geq h^{\tau_1}\}
\end{equation}
such that
\begin{align}
  \label{e:energy-reduced-2}
\big|\{(x,\tilde y)\colon x\in X(h^{\frac12}),\ \tilde y\in \widetilde Y_x\}\big|
&\geq \tfrac1{144}h^{\alpha_1+\epsilon} |X(h^{\frac12})|\cdot |Y(h^{\frac12})|^2,
\\
  \label{e:energy-reduced-3}
\sup_{x\in X(h^{\frac12})}|G(x,\widetilde Y_x)|_{h^{\frac12}}&\leq 28h^{-\alpha_1-\epsilon}|Y|_{h^{\frac12}}.
\end{align}
\end{lemma}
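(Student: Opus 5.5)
We prove Lemma~\ref{l:energy-reduced} by a popularity (pigeonholing) argument, discretizing $Y$ at scale $h^{\frac12}$.

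\emph{Setup.} Write $X_1:=X(h^{\frac12})$ and $r:=h^{\frac12-\epsilon}$. By~\eqref{e:G-prop}, the condition $(x,\vec y)\in\mathcal E_r$ means $|G(x,\tilde y)-G(x,\tilde y')|\le r$. For $x\in X_1$ and $t\in\mathbb R$ set
$$
\mu_x(t):=\big|\{\tilde y\in Y^2\colon |G(x,\tilde y)-t|\le r\}\big|.
$$
Then
$$
\big|\{\vec y\in Y^4\colon (x,\vec y)\in\mathcal E_r\}\big|=\int_{Y^2}\mu_x(G(x,\tilde y))\,d\tilde y .
$$
So~\eqref{e:energy-reduced-1} says that on average over $x\in X_1$ this integral is at least $h^{\alpha_1}|Y|^3\mathbf M_r(Y)$.

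\emph{Step 1: upper bound on $\mu_x$.} Since $\partial_{y_1}G\in[1,2]$, for fixed $y_2$ the admissible $y_1$ lie in an interval of length at most $2r$. Hence $\mu_x(t)\le |Y|\,\mathbf M_{2r}(Y)\le 2|Y|\mathbf M_r(Y)$.

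\emph{Step 2: removing near-diagonal pairs.} The contribution of $\tilde y$ with $|y_1-y_2|<h^{\tau_1}$ is at most $|Y|\cdot 3\mathbf M_{h^{\tau_1}}(Y)\cdot 2|Y|\mathbf M_r(Y)$. By~\eqref{e:Y-energy-noncon} this is at most $\tfrac{3}{16}h^{\alpha_1}|Y|^3\mathbf M_r(Y)$, a small fraction of the total, which we discard.

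\emph{Step 3: popular set.} Let
$$
P_x:=\big\{\tilde y\in Y^2\colon |y_1-y_2|\ge h^{\tau_1},\ \mu_x(G(x,\tilde y))\ge c\,h^{\alpha_1}|Y|\mathbf M_r(Y)\big\}.
$$
The non-popular part contributes at most $c\,h^{\alpha_1}|Y|^3\mathbf M_r(Y)$. Combining with Step 1 gives
$$
\int_{X_1}|P_x|\,dx\gtrsim h^{\alpha_1}|X_1|\,|Y|^2 .
$$

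\emph{Step 4: covering number of $G(x,P_x)$.} Take a maximal $3r$-separated set $t_1,\dots,t_L$ in $G(x,P_x)$. The sets $\{\tilde y\in Y^2\colon|G(x,\tilde y)-t_i|\le r\}$ are disjoint, and each has measure at least $c\,h^{\alpha_1}|Y|\mathbf M_r(Y)$. Therefore
$$
L\lesssim h^{-\alpha_1}\frac{|Y|}{\mathbf M_r(Y)}\le h^{-\alpha_1}\frac{|Y|}{\mathbf M_{h^{\frac12}}(Y)}\le h^{-\alpha_1-\epsilon/2}|Y|_{h^{\frac12}},
$$
where the last step uses~\eqref{e:Y-energy-uniform}. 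Consequently $|G(x,P_x)|_{h^{\frac12}}\lesssim h^{-\epsilon}L\le h^{-\alpha_1-\epsilon}|Y|_{h^{\frac12}}$, because each $3r$-interval is covered by $O(h^{-\epsilon})$ intervals of length $h^{\frac12}$.

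\emph{Step 5: thickening to $Y(h^{\frac12})$.} This is the main obstacle. We define $\widetilde Y_x:=P_x(h^{\frac12})\cap Y(h^{\frac12})^2$, restricted to pairs with $|y_1-y_2|\ge h^{\tau_1}$. This requires replacing $h^{\tau_1}$ by $2h^{\tau_1}$ in Step 2, which is harmless. Since $G$ is Lipschitz, $G(x,\widetilde Y_x)\subset G(x,P_x)(Ch^{\frac12})$, so~\eqref{e:energy-reduced-3} holds with a constant change.

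For~\eqref{e:energy-reduced-2} we must compare $|P_x(h^{\frac12})|$ with $|Y(h^{\frac12})|^2$ rather than with $|Y|^2$. We do this by working with $h^{\frac12}$-squares. Call a square $Q$ popular if $|P_x\cap Q|\ge c'\,|P_x|/|Y|_{h^{\frac12}}^2$. By~\eqref{e:volume-nbhd} there are at most $C|Y|_{h^{\frac12}}^2$ squares meeting $Y^2$, so popular squares carry most of $|P_x|$. On the other hand $|P_x\cap Q|\le \mathbf M_{h^{\frac12}}(Y)^2$, so the number of popular squares is at least
$$
\gtrsim \frac{|P_x|}{\mathbf M_{h^{\frac12}}(Y)^2}.
$$
Each popular square lies in $\widetilde Y_x$ and contributes measure $h$. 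Using $h|Y|_{h^{\frac12}}\sim h^{\frac12}|Y(h^{\frac12})|$ and~\eqref{e:Y-energy-uniform} gives
$$
|\widetilde Y_x|\gtrsim |P_x|\,\frac{h}{\mathbf M_{h^{\frac12}}(Y)^2}\ge h^{\epsilon}|P_x|\,\frac{|Y(h^{\frac12})|^2}{|Y|^2}.
$$
Integrating over $x\in X_1$ and using Step 3 yields~\eqref{e:energy-reduced-2}. Finally we track the numerical constants $1/144$ and $28$, choosing $c$ and $c'$ appropriately.
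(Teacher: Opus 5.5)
Your proposal follows essentially the same argument as the paper. Your $\mu_x$ is the paper's $W_r(x,\cdot)$, and you use the same three-way split into non-popular, near-diagonal (with $2h^{\tau_1}$) and popular pairs, and the same maximal separated set among the popular values. You also thicken $\widetilde Y_x:=P_x(h^{\frac12})$ and use \eqref{e:Y-energy-uniform} to pass from $|Y|^2$ to $|Y(h^{\frac12})|^2$, exactly as the paper does.

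There is one slip to fix, in Step~4. The condition \eqref{e:Y-energy-uniform} gives the \emph{lower} bound $|Y|/\mathbf M_{h^{\frac12}}(Y)\geq h^{\epsilon/2}|Y|_{h^{\frac12}}$, so it cannot justify your last inequality there. The upper bound you need is the trivial $|Y|\leq |Y|_{h^{\frac12}}\mathbf M_{h^{\frac12}}(Y)$ from \eqref{e:volume-covering-M}. Alternatively, as the paper does, use $|Y|\le |Y|_r\mathbf M_r(Y)$ together with $|Y|_r\le |Y|_{h^{\frac12}}$ from \eqref{e:r-cover-monotone}. Either way you get $L\leq c^{-1}h^{-\alpha_1}|Y|_{h^{\frac12}}$ with no $h^{-\epsilon/2}$ loss.

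This matters for the exponent. With your stated bound on $L$, the extra factor $h^{-\epsilon}$ from passing to scale $h^{\frac12}$ gives $h^{-\alpha_1-3\epsilon/2}$, not the $h^{-\alpha_1-\epsilon}$ required in \eqref{e:energy-reduced-3}. With the lossless bound and $c=\frac14$, you recover the constant $28=7\cdot 4$.

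A minor simplification: the popular-squares pigeonholing in Step~5 is unnecessary. Every $h^{\frac12}$-grid square meeting $P_x$ already lies in $P_x(h^{\frac12})$. Each such square carries at most $\mathbf M_{h^{\frac12}}(Y)^2$ of $P_x$, so directly $|P_x(h^{\frac12})|\geq h|P_x|/\mathbf M_{h^{\frac12}}(Y)^2$. This is exactly what the paper obtains from \eqref{e:volume-nbhd} and \eqref{e:volume-covering-M}, and it gives the constant $\tfrac1{36}\cdot\tfrac14=\tfrac1{144}$ with room to spare.
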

\begin{proof}
1.
For $x\in [0,1]$, $z\in\mathbb R$, and $r>0$, define the nonnegative function
$$
W_r(x,z):=\big|
\{\tilde y\in Y^2\colon |G(x,\tilde y)-z|\leq r\}
\big|.
$$
By~\eqref{e:Theta-mix-12}, for each $x,y_1\in [0,1]$ and $z\in\mathbb R$ the set $\{y_2\in [0,1]\colon |G(x,y_1,y_2)-z|\leq r\}$
is contained in an interval of length $2r$.
Thus by~\eqref{e:M-r-monotone} we have the bound
\begin{equation}
  \label{e:W-r-bound}
W_r(x,z)\leq 2|Y|\,\mathbf M_r(Y).
\end{equation}
Recalling~\eqref{e:E-r-def} and~\eqref{e:G-prop}, we have
$$
\mathcal E_r=\{(x,\tilde y,\tilde y')\in [0,1]^5\colon
|G(x,\tilde y)-G(x,\tilde y')|\leq r
\}
$$
and thus
$$
\big|(X(h^{\frac12})\times Y^4)\cap \mathcal E_r\big|
=\int_{X(h^{\frac12})\times Y^2}W_r(x,G(x,\tilde y))\,d\tilde ydx.
$$
Therefore \eqref{e:energy-reduced-1} implies
\begin{equation}
  \label{e:enr-2}
\int_{X(h^{\frac12})\times Y^2}W_{h^{\frac12-\epsilon}}(x,G(x,\tilde y))\,d\tilde y dx\geq h^{\alpha_1}
|X(h^{\frac12})|\cdot |Y|^3\mathbf M_{h^{\frac12-\epsilon}}(Y).
\end{equation}

\noindent 2. For $x\in [0,1]$, define the sets
$$
\begin{aligned}
Z_x\,&:=\{z\in \mathbb R\colon W_{h^{\frac12-\epsilon}}(x,z)\geq \tfrac14 h^{\alpha_1}|Y|\,\mathbf M_{h^{\frac12-\epsilon}}(Y)\},
\\
\widehat Y_x\,&:=\{\tilde y=(y_1,y_2)\in Y^2\colon G(x,\tilde y)\in Z_x,\ |y_1-y_2|\geq 2 h^{\tau_1}\}.
\end{aligned}
$$
We split the domain of integration in~\eqref{e:enr-2} into three regions:
\begin{enumerate}
\item $G(x,\tilde y)\notin Z_x$. The integral over this region is bounded above by
$$
\tfrac14 h^{\alpha_1}|X(h^{\frac12})|\cdot |Y|^3\mathbf M_{h^{\frac12-\epsilon}}(Y).
$$
\item $G(x,\tilde y)\in Z_x$ and $|y_1-y_2|< 2h^{\tau_1}$. 
By~\eqref{e:M-r-monotone}  the volume of this region is bounded above by $4|X(h^{\frac12})|\cdot |Y|\,\mathbf M_{h^{\tau_1}}(Y)$.
By~\eqref{e:W-r-bound} and the nonconcentration condition~\eqref{e:Y-energy-noncon} the integral over this region is bounded above by
$$
8|X(h^{\frac12})|\cdot |Y|^2\,\mathbf M_{h^{\tau_1}}(Y)\mathbf M_{h^{\frac12-\epsilon}}(Y)
\leq \tfrac14 h^{\alpha_1}|X(h^{\frac12})|\cdot |Y|^3\mathbf M_{h^{\frac12-\epsilon}}(Y).
$$
\item $\tilde y\in \widehat Y_x$. By~\eqref{e:W-r-bound} the integral over this region is bounded above by
$$
2|Y|\,\mathbf M_{h^{\frac12-\epsilon}}(Y)
\big|\{(x,\tilde y)\in X(h^{\frac12})\times Y^2\colon \tilde y\in \widehat Y_x\}\big|.
$$
\end{enumerate}
Adding these together, we see that~\eqref{e:enr-2} implies that the sets $\widehat Y_x$ have large volume:
\begin{equation}
  \label{e:enr-2.5}
\big|\{(x,\tilde y)\in X(h^{\frac12})\times Y^2\colon \tilde y\in \widehat Y_x\}\big|
\geq \tfrac14 h^{\alpha_1}|X(h^{\frac12})|\cdot|Y|^2.
\end{equation}

\noindent 3. For each $x\in [0,1]$, define the set
$$
\widetilde Y_x:=\widehat Y_x(h^{\frac12})\ \subset\ Y(h^{\frac12})^2\cap \{|y_1-y_2|\geq h^{\tau_1}\}. 
$$
By~\eqref{e:volume-nbhd} we have
$$
|\widetilde Y_x|\geq\tfrac14 h|\widehat Y_x|_{h^{\frac12}},\quad
|Y(h^{\frac12})|\leq 3h^{\frac12}|Y|_{h^{\frac12}}. 
$$
By~\eqref{e:monotone-set}, we have for each $x\in [0,1]$
$$
\mathbf M_{h^{\frac12}}(\widehat Y_x)\leq \mathbf M_{h^{\frac12}}(Y^2)=\mathbf M_{h^{\frac12}}(Y)^2.
$$
Thus by \eqref{e:volume-covering-M} and the uniformity condition~\eqref{e:Y-energy-uniform}
$$
|\widehat Y_x|
\leq |\widehat Y_x|_{h^{\frac12}}\mathbf M_{h^{\frac12}}(Y)^2
\leq {h^{-\epsilon}|\widehat Y_x|_{h^{\frac12}}|Y|^2\over |Y|^2_{h^{\frac12}}}
\leq {36h^{-\epsilon}|\widetilde Y_x|\cdot |Y|^2\over |Y(h^{\frac12})|^2}.
$$
Now~\eqref{e:enr-2.5} implies~\eqref{e:energy-reduced-2}.

\noindent 4. It remains to show~\eqref{e:energy-reduced-3}.
Fix any $x\in [0,1]$.
Let $z_1,\dots,z_L\in Z_x$ be a maximal $2h^{\frac12-\epsilon}$-separated set.
For each $j$, we have by the definition of $Z_x$
$$
\big|\{\tilde y\in Y^2\colon |G(x,\tilde y)-z_j|<h^{\frac12-\epsilon}\}\big|
\geq \tfrac14 h^{\alpha_1}|Y|\,\mathbf M_{h^{\frac12-\epsilon}}(Y).
$$
The sets $\{\tilde y\in Y^2\colon |G(x,\tilde y)-z_j|<h^{\frac12-\epsilon}\}$
are nonintersecting for different $j$. Therefore
the sum of their volumes is bounded above by $|Y|^2$, which implies
\begin{equation}
  \label{e:enr-4}
L\leq {4h^{-\alpha_1}|Y|\over \mathbf M_{h^{\frac12-\epsilon}}(Y)}\leq 4h^{-\alpha_1}|Y|_{h^{\frac12-\epsilon}}\leq 4h^{-\alpha_1}|Y|_{h^{\frac12}}
\end{equation}
where in the second inequality we used~\eqref{e:volume-covering-M}
and in the third inequality we used~\eqref{e:r-cover-monotone}.

By the definitions of $\widehat Y_x,\widetilde Y_x$ and~\eqref{e:Theta-mix-12} we have
$$
G(x,\widetilde Y_x)\subset Z_x(4h^{\frac12}).
$$
We next have $Z_x\subset \bigcup_{j=1}^L [z_j-2h^{\frac12-\epsilon},z_j+2h^{\frac12-\epsilon}]$,
which implies that
$$
G(x,\widetilde Y_x)\subset \bigcup_{j=1}^L [z_j-3h^{\frac12-\epsilon},z_j+3h^{\frac12-\epsilon}].
$$
From here and~\eqref{e:r-cover-monotone} it follows that
$$
|G(x,\widetilde Y_x)|_{h^{\frac12}}\leq 7h^{-\epsilon}|G(x,\widetilde Y_x)|_{6h^{\frac12-\epsilon}}\leq 7h^{-\epsilon}L.
$$
Together with~\eqref{e:enr-4} this gives~\eqref{e:energy-reduced-3}.
\end{proof}

\subsubsection{Estimating the additive energies}

We are finally ready to give
\begin{proof}[Proof of Proposition~\ref{l:NUP-additive}]
1. Let $\zeta\in (0,0.1]$ and choose
\begin{equation}
  \label{e:NUP-choice-2}
\zeta':=\tfrac12\zeta,\quad
\beta'':=\beta''(\zeta'),\quad
\beta:=\min(\tfrac15\beta'',\tfrac12\zeta)
\end{equation}
where the function $\beta''(\zeta')$ is given in Corollary~\ref{c:shmercor}.
Assume that $\epsilon\in (0,0.1\beta]$, the phase function $\Theta$ satisfies~\eqref{e:NUPred}--\eqref{e:Theta-c-nlin}, $h>0$ is small, and the sets
$X,Y\subset [0.1,0.9]$ satisfy the conditions~\eqref{e:NUPa-Y-uni}--\eqref{e:NUPa-X-noncon}.

Define the set
$$
\widetilde Y:=\{(y_1,y_2)\in Y^2\colon |y_1-y_2|\geq \tfrac12 h^{\beta''/2} \}.
$$
Note that
\begin{equation}
  \label{e:NUPa-int-1}
\begin{gathered}
Y(h^{\frac12})^2\cap\{ |y_1-y_2|\geq h^{\beta''/2}\}
\ \subset\ \widetilde Y(h^{\frac12})
\ \subset\ Y(h^{\frac12})^2\cap \{|y_1-y_2|\geq \tfrac14h^{\beta''/2}\}.
\end{gathered}
\end{equation}
The set $\widetilde Y$ contains most of the volume of~$Y^2$. Indeed,
$$
|Y^2\setminus\widetilde Y|\leq |Y|\,\mathbf M_{h^{\beta''/2}}(Y)
\leq  |Y|\,\mathbf M_{h^{\beta}}(Y)
\leq h^{27\epsilon}|Y|^2 
$$
where the last inequality follows from~\eqref{e:NUPa-Y-noncon2}.
It follows that
\begin{equation}
  \label{e:NUPa-int-2}
|\widetilde Y|\geq \tfrac12|Y|^2.
\end{equation}
Using the weak uniformity condition~\eqref{e:NUPa-Y-uni}, we can convert~\eqref{e:NUPa-int-2}
to a lower bound on the volume of the neighborhood $\widetilde Y(h^{\frac12})$.
By~\eqref{e:volume-nbhd} we have
$$
|\widetilde Y(h^{\frac12})|\geq \tfrac14 h|\widetilde Y|_{h^{\frac12}},\quad
|Y(h^{\frac12})|\leq 3h^{\frac12}|Y|_{h^{\frac12}}.
$$
Next, by~\eqref{e:monotone-set} we have
$$
\mathbf M_{h^{\frac12}}(\widetilde Y)\leq \mathbf M_{h^{\frac12}}(Y^2)=\mathbf M_{h^{\frac12}}(Y)^2.
$$
Thus by~\eqref{e:volume-covering-M} and~\eqref{e:NUPa-Y-uni} we get
$$
|\widetilde Y|\leq |\widetilde Y|_{h^{\frac12}}\mathbf M_{h^{\frac12}}(Y)^2
\leq {h^{-\epsilon}|\widetilde Y|_{h^{\frac12}}|Y|^2\over |Y|_{h^{\frac12}}^2}
\leq {36h^{-\epsilon}|\widetilde Y(h^{\frac12})|\cdot |Y|^2\over |Y(h^{\frac12})|^2}.
$$
Together with~\eqref{e:NUPa-int-2} this gives
\begin{equation}
  \label{e:NUPa-int-3}
|\widetilde Y|_{h^{\frac12}}\geq \tfrac12h^{\epsilon}|Y|_{h^{\frac12}}^2,\quad
|\widetilde Y(h^{\frac12})|\geq \tfrac{1}{72}h^{\epsilon} |Y(h^{\frac12})|^2.
\end{equation}

\noindent 2. We apply Corollary~\ref{c:shmercor} to the sets $X$ and $\widetilde Y$,
with the projection function~$G$ defined in~\eqref{e:G-def}.
For that we need to verify the assumptions~\eqref{e:shmercor-1}--\eqref{e:shmercor-3}:
\begin{itemize}
\item[\underline{\eqref{e:shmercor-1}:}] see~\S\ref{s:our-projection-function} above.
\smallskip
\item[\underline{\eqref{e:shmercor-2}:}] we have
$$
\mathbf M_{|\widetilde Y(h^{\frac12})|^{\frac12}}(\widetilde Y(h^{\frac12}))
\leq \mathbf M_{|Y(h^{\frac12})|}(Y(h^{\frac12}))^2
\leq h^{\zeta}|Y(h^{\frac12})|^2
\leq 72h^{\zeta-\epsilon}|\widetilde Y(h^{\frac12})|
\leq h^{\zeta'}|\widetilde Y(h^{\frac12})|.
$$
Here the first inequality follows from the inclusion $\widetilde Y(h^{\frac12})\subset Y(h^{\frac12})^2$. The second inequality follows from~\eqref{e:NUPa-Y-noncon1}.
The third inequality follows from~\eqref{e:NUPa-int-3}.
\item[\underline{\eqref{e:shmercor-3}:}] let $\tilde y=(y_1,y_2)\in\widetilde Y(h^{\frac12})$ and $J$ be an interval of length $|J|\in [h^{\frac12},2]$.
We estimate
$$
\begin{aligned}
\big|\{x\in X(h^{\frac12})\colon \log \partial_{y_2}G(x,\tilde y)-\log\partial_{y_1}G(x,\tilde y)\in J\}\big|
&\leq \Big\lceil
{4h^{-\beta''/2}\over c_{\nlin}}
\Big\rceil \mathbf M_{|J|}(X(h^{\frac12}))
\\
\leq Ch^{-\beta''/2-2\beta}|J|^\zeta|X(h^{\frac12})|
&\leq \tfrac13 h^{-\beta''}|J|^{\zeta'}|X(h^{\frac12})|.
\end{aligned}
$$
Here the first inequality follows from Lemma~\ref{l:transversality-basic},
since $|y_1-y_2|\geq \tfrac14h^{\beta''/2}$ by~\eqref{e:NUPa-int-1}.
The second inequality follows from~\eqref{e:NUPa-X-noncon}.
\end{itemize}
Now Corollary~\ref{c:shmercor} applies and shows that there exists
a set $X_{\bad}\subset X(h^{\frac12})$ with $|X_{\bad}|\leq h^{\beta''}|X(h^{\frac12})|$
such that for all $x\in X(h^{\frac12})\setminus X_{\bad}$
and $\widetilde Y'\subset \widetilde Y(h^{\frac12})$
\begin{equation}
  \label{e:shmercor-applied}
|\widetilde Y'|\geq h^{\beta''}|\widetilde Y(h^{\frac12})|
\quad\Longrightarrow\quad
|G(x,\widetilde Y')|_{h^{\frac12}}\geq h^{-\beta''}|\widetilde Y|_{h^{\frac12}}^{\frac12}
\geq \tfrac12h^{-\beta''+\epsilon/2}|Y|_{h^{\frac12}}.
\end{equation}
Here the last inequality follows from~\eqref{e:NUPa-int-3}.

\noindent 3.
We argue by contradiction, assuming that~\eqref{e:NUP-additive-conc} does not hold:
$$
\mathbf E_{h^{\frac12-\epsilon}}(X(h^{\frac12})\xrightarrow\Theta Y)> h^{26\epsilon}.
$$
We apply Lemma~\ref{l:energy-reduced} with
$$
\alpha_1:=26\epsilon,\quad
\tau_1:=\tfrac12\beta''.
$$
Note that the assumption~\eqref{e:Y-energy-uniform} follows from~\eqref{e:NUPa-Y-uni}
and the assumption~\eqref{e:Y-energy-noncon} follows from~\eqref{e:NUPa-Y-noncon2}:
$$
\mathbf M_{h^{\tau_1}}(Y)\leq \mathbf M_{h^{\beta}}(Y)\leq h^{27\epsilon}|Y|
\leq\tfrac{1}{32}h^{\alpha_1}|Y|.
$$
We now see that for each $x\in X(h^{\frac12})$ there exists a set (where
we use~\eqref{e:NUPa-int-1})
$$
\widetilde Y_x\subset \widetilde Y(h^{\frac12})\subset Y(h^{\frac12})^2
$$
which satisfies the conditions~\eqref{e:energy-reduced-2}--\eqref{e:energy-reduced-3}.

By~\eqref{e:energy-reduced-3}
and since $27\epsilon<\beta''-\epsilon/2$ we have for all $x\in X(h^{\frac12})$
$$
|G(x,\widetilde Y_x)|_{h^{\frac12}}\leq 28h^{-27\epsilon}|Y|_{h^{\frac12}}
<\tfrac12h^{-\beta''+\epsilon/2}|Y|_{h^{\frac12}}.
$$
Applying~\eqref{e:shmercor-applied} to $\widetilde Y':=\widetilde Y_x$,
we see that
$$
x\in X(h^{\frac12})\setminus X_{\bad}\quad\Longrightarrow\quad
|\widetilde Y_x|<h^{\beta''}|\widetilde Y(h^{\frac12})|.
$$
It follows that
$$
\begin{aligned}
\big|\{ (x,\tilde y)\colon x\in X(h^{\frac12}),\ \tilde y\in \widetilde Y_x \}\big|
&\leq 
|X_{\bad}|\cdot |Y(h^{\frac12})|^2
+h^{\beta''}|X(h^{\frac12})|\cdot |Y(h^{\frac12})|^2
\\
&\leq 2h^{\beta''}|X(h^{\frac12})|\cdot |Y(h^{\frac12})|^2
\\
&<\tfrac{1}{144}h^{\alpha_1+\epsilon}|X(h^{\frac12})|\cdot |Y(h^{\frac12})|^2
\end{aligned}
$$
where in the last inequality we use that $\beta''>27\epsilon=\alpha_1+\epsilon$.
Together with~\eqref{e:energy-reduced-2} this gives a contradiction and finishes the proof.
\end{proof}
\begin{remark}
  \label{r:NUP-choice}
Combining~\eqref{e:NUP-choice-1} and~\eqref{e:NUP-choice-2}, we see that
the dependence of $\beta$ on $\zeta$ in Theorem~\ref{t:NUP} is given by
\begin{equation}
  \label{e:NUP-choice-3}
\beta(\zeta)=\min(0.1\beta'(\tfrac12\zeta),\tfrac12\zeta,0.02)
\end{equation}
where $\beta'(\bullet)$ is given by Theorem~\ref{t:shmerkin-projection}.
\end{remark}

\bibliographystyle{alpha}
\bibliography{General,QC,Dyatlov,Projection}

\end{document}